\newtheoremstyle{mystyle}
  {}
  {}
  {\normalfont}
  { }
  {\bfseries}
  {}
  {10pt}
  { }
\theoremstyle{mystyle}
\newtheorem{thm}{Theorem}[section]
\newtheorem{prop}[thm]{Proposition}
\newtheorem{lem}[thm]{Lemma}
\newtheorem{rmk}{Remark}
\newcommand{\ind}{\boldsymbol 1}
\newcommand{\dd}{\mathrm d}
\newcommand{\pd}{\partial}
\newcommand{\ee}{\mathrm e}
\newcommand{\EE}{\mathbb E}
\newcommand{\VV}{\mathrm{Var}}
\newcommand{\cov}{\mathrm{Cov}}
\newcommand{\pto}{\stackrel{p}{\to}}
\newcommand{\dto}{\stackrel{d}{\to}}
\newcommand{\TT}{\mathsf T}
\begin{document}
\bibliographystyle{plain}

\title
[Parametric estimation for SPDEs based on temporal and spatial increments ] 
{
Parametric estimation for linear parabolic SPDEs
in two space dimensions based on temporal and spatial increments
}
\date{}
\author{Yozo Tonaki$^1$}
\author{Yusuke Kaino$^2$}
\author{Masayuki Uchida$^{1,3}$}
\address{
$^1$Graduate School of Engineering Science, Osaka University
}
\address{
$^2$Graduate School of Maritime Sciences, Kobe University
}
\address{
$^3$Center for Mathematical Modeling and Date Science (MMDS), Osaka University
and JST CREST
}

\keywords{
Adaptive estimation,
high frequency data,
minimum contrast estimation,
stochastic partial differential equations in two space dimensions, 
thinned data
}

\maketitle

\begin{abstract}
We deal with parameter estimation for a linear parabolic second-order 
stochastic partial differential equation 
in two space dimensions driven by two types of $Q$-Wiener processes 
based on high frequency data with respect to time and space.
We propose minimum contrast estimators of the coefficient parameters
based on temporal and spatial squared increments, 
and provide adaptive estimators of the coefficient parameters 
based on an approximate coordinate process.
We also give an example and simulation results of the proposed estimators.
\end{abstract}

\section{Introduction}\label{sec1}
We consider the following linear parabolic stochastic partial differential equation
(SPDE) in two space dimensions:
\begin{align}
\dd X_t(y,z)
&=\biggl\{
\theta_2
\biggl(\frac{\pd^2}{\pd y^2}+\frac{\pd^2}{\pd z^2}\biggr)
+\theta_1\frac{\pd}{\pd y} 
+\eta_1\frac{\pd}{\pd z} 
+\theta_0 
\biggr\} X_t(y,z) \dd t
\nonumber
\\
&\qquad
+\sigma\dd W_t^{Q}(y,z),
\quad (t,y,z) \in [0,1] \times D
\label{2d_spde}
\end{align}
with the initial condition $X_0 = \xi$ and the Dirichlet boundary condition
\begin{equation*}
X_t(y,z) = 0, \quad (t,y,z) \in [0,1] \times \pd D,
\end{equation*}
where $D=(0,1)^2$, 
$W_t^Q$ is a $Q$-Wiener process in a Sobolev space on $D$,
the initial value $\xi$ is independent of $W_t^Q$,
$\theta=(\theta_0, \theta_1, \eta_1, \theta_2)$ and $\sigma$ are unknown parameters,
$(\theta_0, \theta_1, \eta_1, \theta_2,\sigma) \in \Theta$, 
the parameter space $\Theta$ is a compact convex subset of 
$\mathbb R^3 \times (0,\infty)^2$, 
$\theta^*=(\theta_0^*, \theta_1^*, \eta_1^*, \theta_2^*)$ and $\sigma^*$ 
are true values of the parameters and we assume that 
$(\theta^*,\sigma^*)$ belongs to the interior of $\Theta$.

SPDEs are, roughly speaking, partial differential equations with a random term 
and they are used in various fields 
such as engineering, economics, and biology.
In particular, the linear parabolic SPDEs are basic and important equations,
including the stochastic heat equation, and appear in many situations.
For applications of SPDEs to 
geophysical fluid dynamics and biology,
see Piterbarg and Ostrovskii \cite{Piterbarg_Ostrovskii1997}, 
Tuckwell \cite{Tuckwell2013} and Altmeyer et al.\,\cite{Altmeyer_etal2022}.

Statistical inference for SPDEs 
has been developed by many researchers,
see for instance, Cialenco et al.\,\cite{Cialenco_etal2018},
Cialenco and Glatt-Holtz \cite{Cialenco_Glatt-Holtz2011},
H{\"u}bner et al.\,\cite{Hubner_etal1993},
H{\"u}bner and Rozovskii \cite{Hubner_Rozovskii1995},
Lototsky \cite{Lototsky2003},
and Mahdi Khalil and Tudor \cite{MahdiKhalil_Tudor2019}.
As for discrete observations, 
see Markussen \cite{Markussen2003},
Bibinger and Trabs \cite{Bibinger_Trabs2020},
Chong  \cite{Chong2020,Chong2019arXiv},
Cialenco et al.\,\cite{Cialenco_etal2020},
Cialenco and Huang \cite{Cialenco_Huang2020},
Kaino and Uchida \cite{Kaino_Uchida2020,Kaino_Uchida2021},
Hildebrandt and Trabs 
\cite{Hildebrandt_Trabs2021arXiv},
Tonaki et al.\,\cite{TKU2022arXiv2}
and references therein.
For an overview of existing theories, see 
Lototsky \cite{Lototsky2009} and Cialenco \cite{Cialenco2018}.
Recently, Tonaki et al.\,\cite{TKU2022arXiv1} proposed the estimators 
of the coefficient parameters of SPDE \eqref{2d_spde} based on temporal squared increments. 
They first constructed the estimators of the parameters appearing 
in the eigenfunctions of the differential operator of SPDE \eqref{2d_spde} 
and utilized them to approximate the coordinate processes defined 
by \eqref{cor-pro} below. 
Since the coordinate process was an Ornstein-Uhlenbeck process, 
they next estimated each coefficient parameter 
based on statistical inference for stochastic differential equations.  
The results showed that the estimators of the coefficient parameters 
had asymptotic normality with the convergence rate $\sqrt{n}$ for some $n \le N$,
where $N$ is the number of temporal observations.
Hildebrandt and Trabs \cite{Hildebrandt_Trabs2021} 
treated the following linear parabolic SPDE 
in one space dimension 
\begin{equation*}
\dd X_t(y)
=\biggl(
\theta_2 \frac{\pd^2}{\pd y^2}
+\theta_1\frac{\pd}{\pd y} 
+\theta_0 
\biggr) X_t(y) \dd t
+\sigma\dd B_t(y),
\quad t \ge 0, y \in (0,1)
\end{equation*}
with the initial condition $X_0 \in L^2((0,1))$
and the Dirichlet boundary condition
$X_t(0) = X_t(1) = 0$,
where $B_t$ is a cylindrical Brownian motion in a Sobolev space on $(0,1)$.
They proposed minimum contrast estimators based on double increments, 
i.e., temporal and spatial increments, 
and showed that the estimators have asymptotic normality with the convergence rate
$\sqrt{MN}$, where $M$ and $N$ are the number of 
spatial and temporal observations, respectively.

In this paper, we apply the method of Hildebrandt and Trabs \cite{Hildebrandt_Trabs2021}
to SPDE \eqref{2d_spde} in two space dimensions, 
and propose minimum contrast estimators of the coefficient parameters 
$(\theta_1,\eta_1,\theta_2,\sigma^2)$ of the SPDE
driven by two types of $Q$-Wiener processes 
based on temporal and spatial increments utilizing thinned data with respect to space.
Moreover, using thinned data with respect to time,
we derive parametric adaptive estimators of the coefficient parameters 
of the SPDE for each driving noise 
based on approximate coordinate processes, 
where coordinate processes are  Ornstein-Uhlenbeck processes.
For details of parametric adaptive estimators for diffusion processes, see for example, 
Uchida and Yoshida \cite{Uchida_Yoshida2012} and references therein.
For simplicity, parametric adaptive estimators are referred to as adaptive estimators throughout this paper.
Note that our method in this paper addresses both time and space increments 
and is inherently different from a method
in Tonaki et al.\,\cite{TKU2022arXiv1}
because our method cannot specify the method based on only time increments 
in Tonaki et al.\,\cite{TKU2022arXiv1}.
The main result of this paper is 
that the minimum contrast estimators are bounded 
in probability at the rate $\sqrt{m N}=O(N)$,
where $N$ is the number of temporal observations and 
$m$ is the number of spatially thinned data. 
Furthermore, the adaptive estimators have asymptotic normality 
with the convergence rate $\sqrt{n}$ for some $n \le N$.
It is also revealed that  
the convergence rates of both estimators are faster 
than those of Tonaki et al.\,\cite{TKU2022arXiv1}.

This paper is organized as follows.
In Section \ref{sec2}, we state main results.
We propose minimum contrast estimators and adaptive estimators
of the coefficient parameters 
in SPDE \eqref{2d_spde} driven by two types of $Q$-Wiener processes,
and show the asymptotic properties of these estimators.
Section \ref{sec3} gives an example and simulation studies of the proposed estimators.
In Section \ref{sec4}, we provide the proofs.

\section{Main results}\label{sec2}

Let $(\Omega,\mathscr F, \{\mathscr F_t\}_{t\ge0}, P)$ be a stochastic basis
with usual conditions, and let $\{w_{\ell_1,\ell_2}\}_{\ell_1,\ell_2 \ge 1}$ 
be independent real valued standard Brownian motions on this basis.

We set the differential operator $A_\theta$ by 
\begin{equation*}
-A_\theta = 
\theta_2\biggl(\frac{\pd^2}{\pd y^2} + \frac{\pd^2}{\pd z^2} \biggr)
+ \theta_1\frac{\pd}{\pd y} + \eta_1\frac{\pd}{\pd z} + \theta_0.
\end{equation*}
Notice that SPDE \eqref{2d_spde} can be expressed as
\begin{equation*}
\dd X_t(y,z) = -A_\theta X_t(y,z) \dd t + \sigma \dd W_t^Q(y,z).
\end{equation*}
The domain of $A_\theta$ is $\mathscr D(A_\theta) = H^2(D) \cap H_0^1(D)$,
where $H^p(D)$ is the $L^2$-Sobolev space of order $p \in \mathbb N$ and
$H_0^1(D)$ is the closure of $C_{\mathrm{c}}^{\infty}(D)$ in $H^1(D)$.
The eigenfunctions $e_{\ell_1,\ell_2}$ of the differential operator $A_\theta$ 
and the corresponding eigenvalues $\lambda_{\ell_1,\ell_2}$ are given by 
\begin{equation*}
e_{\ell_1,\ell_2}(y,z)=e_{\ell_1}^{(1)}(y)e_{\ell_2}^{(2)}(z),
\quad
\lambda_{\ell_1,\ell_2}=\theta_2(\pi^2(\ell_1^2+\ell_2^2)+\Gamma),
\end{equation*}
for $\ell_1,\ell_2 \ge 1$ and $y,z\in [0,1]$, where
\begin{equation*}
e_{\ell_1}^{(1)}(y)
=\sqrt 2 \sin(\pi\ell_1 y) \ee^{-\kappa y/2},
\quad
e_{\ell_2}^{(2)}(z)
=\sqrt 2 \sin(\pi\ell_2 z) \ee^{-\eta z/2},
\end{equation*}
\begin{equation*}
\kappa=\frac{\theta_1}{\theta_2}, 
\quad
\eta=\frac{\eta_1}{\theta_2},
\quad
\Gamma=-\frac{\theta_0}{\theta_2} +\frac{\kappa^2+\eta^2}{4}.
\end{equation*}
The eigenfunctions $\{e_{\ell_1,\ell_2}\}_{\ell_1,\ell_2\ge1}$ 
are orthonormal with respect to the weighted $L^2$-inner product
\begin{equation*}
\langle u, v \rangle
:= \langle u, v \rangle_\theta 
= \int_0^1 \int_0^1 u(y,z)v(y,z)\ee^{\kappa y} \ee^{\eta z} \dd y \dd z, 
\quad 
\| u \| = \sqrt{\langle u, u \rangle}
\end{equation*}
for $u,v\in L^2(D)$. 

Although it is possible to consider the general initial condition
as in Tonaki et al.\,\cite{TKU2022arXiv1}, for simplicity, 
we impose the following condition on the initial value $\xi$ 
of SPDE \eqref{2d_spde}.
\begin{enumerate}
\item[\textbf{[A1]}]
The initial value $\xi \in L^2(D)$ is non-random and
$\| A_\theta^{(1+\alpha)/2}\xi \| < \infty$.
\end{enumerate}
Moreover, we assume $\lambda_{1,1}^* > 0$ 
so that $A_\theta$ is a positive self-adjoint operator.

We first consider the $Q_1$-Wiener process $W_t^{Q_1}$ defined as follows.
\begin{equation}\label{QW}
\langle W_t^{Q_1}, u \rangle = 
\sum_{\ell_1,\ell_2\ge1} \lambda_{\ell_1,\ell_2}^{-\alpha/2} 
\langle u, e_{\ell_1,\ell_2}\rangle  w_{\ell_1,\ell_2}(t)
\end{equation}
for $u \in L^2(D)$, $t \ge 0$ and $\alpha \in (0,2)$.
$\alpha$ is known and its restriction guarantees 
the estimability of the coefficient parameters 
$(\theta_1,\eta_1,\theta_2,\sigma^2)$.
We leave the study of the estimability of $\alpha$ for future work.
There exists a unique mild solution of SPDE \eqref{2d_spde} driven 
by $Q_1$-Wiener process \eqref{QW}, 
which is given by 
\begin{equation*}
X_t=\ee^{-t A_\theta}\xi+\sigma\int_0^t \ee^{-(t-s)A_\theta}\dd W_s^{Q_1},
\end{equation*}
where $\ee^{-t A_\theta} u 
= \sum_{\ell_1,\ell_2 \ge 1} \ee^{-\lambda_{\ell_1,\ell_2}t}
\langle u, e_{\ell_1,\ell_2}\rangle e_{\ell_1,\ell_2}$ for $u \in L^2(D)$.
By setting the coordinate process 
\begin{equation}\label{cor-pro}
x_{\ell_1,\ell_2}(t) = \langle X_t, e_{\ell_1,\ell_2} \rangle =
\ee^{-\lambda_{\ell_1,\ell_2} t} \langle \xi, e_{\ell_1,\ell_2} \rangle 
+\sigma \lambda_{\ell_1,\ell_2}^{-\alpha/2} 
\int_0^t \ee^{-\lambda_{\ell_1,\ell_2}(t-s)} \dd w_{\ell_1,\ell_2}(s),
\end{equation}
the random field $X_t(y,z)$ is spectrally decomposed as
\begin{equation*}
X_t(y,z)=\sum_{\ell_1,\ell_2\ge1} 
x_{\ell_1,\ell_2}(t)
e_{\ell_1}^{(1)}(y)e_{\ell_2}^{(2)}(z),
\quad t\ge0,\ y,z\in[0,1],
\end{equation*}
where $\{x_{\ell_1,\ell_2}\}_{\ell_1,\ell_2 \ge 1}$ are
one dimensional independent processes satisfying the Ornstein-Uhlenbeck dynamics
\begin{equation}\label{OU}
\dd x_{\ell_1,\ell_2}(t) =
-\lambda_{\ell_1,\ell_2} x_{\ell_1,\ell_2}(t)\dd t
+\sigma \lambda_{\ell_1,\ell_2}^{-\alpha/2} \dd w_{\ell_1,\ell_2}(t),
\quad
x_{\ell_1,\ell_2}(0) = \langle \xi, e_{\ell_1,\ell_2} \rangle.
\end{equation}

We assume that a mild solution $X$ is discretely observed on 
the grid $(t_i,y_j,z_k) \in [0,1]^3$ with 
\begin{equation*}
t_i = i\Delta, \quad 
y_j = j/M_1, \quad
z_k = k/M_2
\end{equation*}
for $i=0,\ldots,N$, $j=0,\ldots,M_1$ and $k=0,\ldots,M_2$, where $\Delta=1/N$.
That is, the data are discrete observations 
$\mathbb X_{M,N}=\{X_{t_i}(y_j,z_k);
i=0,\ldots,N, j=0,\ldots,M_1, k=0,\ldots, M_2 \}$,
where we set $M = M_1 M_2$.
For a sequence $\{a_n\}$, 
we write $a_n \equiv a$ if $a_n = a$ for some $a \in \mathbb R$ and all $n$.

\subsection{Minimum contrast estimation of the coefficient parameters}
\label{sec2-1}
In this subsection, we give the second-order moment of the triple increments 
and consider the minimum contrast estimation 
of the coefficient parameters $(\theta_1,\eta_1,\theta_2, \sigma^2)$.

Fix $b \in (0,1/2)$.
We consider the spatially thinned data 
$\mathbb X_{m,N}^{(1)}=\{X_{t_i}(\widetilde y_j,\widetilde z_k); 
i=0,\ldots,N, j=0,\ldots,m_1, k=0,\ldots, m_2\}$ 
of $\mathbb X_{M,N}$ such that
\begin{equation*}
b \le \widetilde y_0 < \widetilde y_1 < \cdots < \widetilde y_{m_1} \le 1-b,
\quad
b \le \widetilde z_0 < \widetilde z_1 < \cdots < \widetilde z_{m_2} \le 1-b,
\end{equation*}
$m=m_1 m_2$, $m=O(N)$, $N=O(m)$. 
For simplicity, 
we set $\widetilde y_0 = \widetilde z_0 = b$, 
$\widetilde y_{m_1} = \widetilde z_{m_2} = 1-b$ and $m_1=m_2$,
that is, 
$\widetilde y_j = b + j\delta$ and $\widetilde z_k = b + k\delta$,
where $\delta = \frac{1-2b}{m_1} = \frac{1-2b}{\sqrt{m}}$.

We introduce the following temporal and spatial increments
\begin{align*}
\Delta_i X (y,z) & = X_{t_{i}}(y,z)-X_{t_{i-1}}(y,z), 
\\
D_{j,k} X(t) & = X_t(\widetilde y_{j},\widetilde z_{k})
- X_t(\widetilde y_{j-1},\widetilde z_{k})
- X_t(\widetilde y_{j},\widetilde z_{k-1})
+ X_t(\widetilde y_{j-1},\widetilde z_{k-1})
\end{align*}
and consider triple increments
\begin{align*}
T_{i,j,k}X & = (\Delta_i \circ D_{j,k}) X
= (D_{j,k} \circ \Delta_i) X 
\\
&= X_{t_{i}}(\widetilde y_{j},\widetilde z_{k})
-X_{t_{i}}(\widetilde y_{j-1},\widetilde z_{k})
-X_{t_{i}}(\widetilde y_{j},\widetilde z_{k-1})
+X_{t_{i}}(\widetilde y_{j-1},\widetilde z_{k-1})
\\
&\qquad
-X_{t_{i-1}}(\widetilde y_{j},\widetilde z_{k})
+X_{t_{i-1}}(\widetilde y_{j-1},\widetilde z_{k})
+X_{t_{i-1}}(\widetilde y_{j},\widetilde z_{k-1})
-X_{t_{i-1}}(\widetilde y_{j-1},\widetilde z_{k-1}).
\end{align*}

Let $J_0$ be the Bessel function of the first kind of order 0:
\begin{equation*}
J_0(x) = 1 + \sum_{k\ge1} \frac{(-1)^k}{(k!)^2} \biggl(\frac{x}{2}\biggr)^{2k}.
\end{equation*}
For $\delta/\sqrt{\Delta} \equiv r \in(0,\infty)$, define
\begin{equation}\label{psi}
\psi_{r,\alpha}(\theta_2)
=\frac{2}{\theta_2\pi}
\int_0^\infty 
\frac{1-\ee^{-x^2}}{x^{1+2\alpha}}
\biggl(
J_0\Bigl(\frac{\sqrt{2}r x}{\sqrt{\theta_2}}\Bigr)
-2J_0\Bigl(\frac{r x}{\sqrt{\theta_2}}\Bigr)+1
\biggr) \dd x.
\end{equation}

\begin{prop}\label{prop1}
Let $\alpha \in (0,2)$ and $\delta/\sqrt{\Delta} \equiv r \in(0,\infty)$.
Under [A1], it holds that
\begin{equation*}
\EE[(T_{i,j,k}X)^2]
=\Delta^\alpha \sigma^2
\ee^{-\kappa(\widetilde y_{j-1}+\widetilde y_j)/2}
\ee^{-\eta (\widetilde z_{k-1}+\widetilde z_k)/2}
\psi_{r,\alpha}(\theta_2)
+ R_{i,j,k} + O(\Delta^{1+\alpha}),
\end{equation*}
where $\sum_{i=1}^N R_{i,j,k}=O(\Delta^{\alpha})$ uniformly in $j,k$.
\end{prop}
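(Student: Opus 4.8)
The plan is to use the spectral decomposition of $X$ together with the Ornstein--Uhlenbeck representation \eqref{cor-pro} of the coordinate processes, and to compute the second moment of $T_{i,j,k}X$ by exploiting the independence of $\{x_{\ell_1,\ell_2}\}$. First I would write
$T_{i,j,k}X = \sum_{\ell_1,\ell_2\ge1} (\Delta_i x_{\ell_1,\ell_2}) (D_{j,k}(e_{\ell_1}^{(1)}e_{\ell_2}^{(2)}))$,
where $D_{j,k}$ acts on the tensor product of sines-times-exponentials. Because the $x_{\ell_1,\ell_2}$ are independent and mean-zero up to the deterministic initial contribution, the cross terms vanish and
$\EE[(T_{i,j,k}X)^2] = \sum_{\ell_1,\ell_2} \EE[(\Delta_i x_{\ell_1,\ell_2})^2]\,\bigl(D_{j,k}(e_{\ell_1}^{(1)}e_{\ell_2}^{(2)})\bigr)^2 + (\text{initial-value term})$,
with the initial-value term contributing to $R_{i,j,k}$ via [A1] and the bound $\|A_\theta^{(1+\alpha)/2}\xi\|<\infty$; summing it over $i$ telescopes/decays to $O(\Delta^\alpha)$ uniformly in $j,k$. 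For the stationary-type part, $\EE[(\Delta_i x_{\ell_1,\ell_2})^2]$ is the standard OU increment variance $\sigma^2\lambda_{\ell_1,\ell_2}^{-\alpha}(1-\ee^{-\lambda_{\ell_1,\ell_2}\Delta})$ plus a transient piece of order $\ee^{-\lambda_{\ell_1,\ell_2}t_{i-1}}$ that again feeds $R_{i,j,k}$.

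Next I would compute $D_{j,k}(e_{\ell_1}^{(1)}e_{\ell_2}^{(2)})$ explicitly. Since $D_{j,k}$ factors as a product of one-dimensional second differences in $y$ and in $z$, and since $e_{\ell_1}^{(1)}(y)=\sqrt2\sin(\pi\ell_1 y)\ee^{-\kappa y/2}$, the second difference in $y$ over the interval $[\widetilde y_{j-1},\widetilde y_j]$ of length $\delta$ produces, up to the weight factor $\ee^{-\kappa(\widetilde y_{j-1}+\widetilde y_j)/2}$ and a lower-order correction from the exponential, a term of the form $2\sqrt2\sin(\pi\ell_1\widetilde y_{j-1/2})(\cos(\pi\ell_1\delta)-1)$ — i.e. the clean ``$e^{\iu\cdot}$ minus shifts'' structure familiar from the one-dimensional analysis. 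Squaring and multiplying the $y$- and $z$-parts, and using $\sin^2\approx$ average $\tfrac12$ after summation (this is where the thinning and the restriction to $[b,1-b]$ matter, keeping the weights $\ee^{-\kappa y}$, $\ee^{-\eta z}$ bounded away from degeneracy), yields the factor $\ee^{-\kappa(\widetilde y_{j-1}+\widetilde y_j)/2}\ee^{-\eta(\widetilde z_{k-1}+\widetilde z_k)/2}$ together with a double series in $\ell_1,\ell_2$.

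The remaining task is to show that
$\sum_{\ell_1,\ell_2} \sigma^2\lambda_{\ell_1,\ell_2}^{-\alpha}(1-\ee^{-\lambda_{\ell_1,\ell_2}\Delta})\,(\text{trig factors in }\ell_1\delta,\ell_2\delta)$
converges, after the scaling $\delta=r\sqrt\Delta$, to $\Delta^\alpha\sigma^2\psi_{r,\alpha}(\theta_2)$. Here I would substitute $\lambda_{\ell_1,\ell_2}=\theta_2(\pi^2(\ell_1^2+\ell_2^2)+\Gamma)$, rescale the summation index by $\sqrt\Delta$, and recognize the Riemann sum converging to a two-dimensional integral; passing to polar coordinates collapses it to the one-dimensional integral defining $\psi_{r,\alpha}(\theta_2)$, with the Bessel function $J_0$ arising as the angular average of $\ee^{\iu(\text{linear form})}$ — precisely the identity $\frac{1}{2\pi}\int_0^{2\pi}\ee^{\iu\rho\cos\phi}\dd\phi=J_0(\rho)$. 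The combination $J_0(\sqrt2 rx/\sqrt{\theta_2})-2J_0(rx/\sqrt{\theta_2})+1$ is the angular average of $|\ee^{\iu r x \omega_1}-1|^2|\ee^{\iu r x\omega_2}-1|^2$-type cross terms coming from the four corners of the spatial rectangle; the $\Gamma$ term and the exponential-weight corrections contribute only $O(\Delta^{1+\alpha})$ and are absorbed into the stated error.

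I expect the main obstacle to be the uniform (in $i,j,k$) control of the error terms — in particular, justifying the interchange of summation and the limit in the double series, bounding the tail $\sum_{\ell_1^2+\ell_2^2 \text{ large}}$ uniformly, and showing that the transient OU contributions together with the initial-value term sum over $i$ to $O(\Delta^\alpha)$ rather than something larger. This is the analogue of the delicate estimates in Hildebrandt and Trabs \cite{Hildebrandt_Trabs2021}, complicated here by the genuinely two-dimensional lattice sum and the two exponential weights $\ee^{-\kappa y}$, $\ee^{-\eta z}$; the restriction $\alpha\in(0,2)$ is exactly what makes the series $\sum\lambda^{-\alpha}(1-\ee^{-\lambda\Delta})$ summable with the right $\Delta^\alpha$ scaling.
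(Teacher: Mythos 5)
Your outline reproduces the skeleton of the paper's own argument: expand $T_{i,j,k}X$ through the coordinate processes, use independence of the noise parts so that only the diagonal OU increment variances survive, push the initial-value contribution into $R_{i,j,k}$ via [A1], and then identify the remaining double series, after the scaling $\delta=r\sqrt{\Delta}$, as a Riemann sum converging to the two-dimensional integral that collapses in polar coordinates to $\psi_{r,\alpha}(\theta_2)$ through the $J_0$ identities. That part of the plan is sound (modulo small slips: the OU increment variance is $\sigma^2\lambda_{\ell_1,\ell_2}^{-1-\alpha}(1-\ee^{-\lambda_{\ell_1,\ell_2}\Delta})$, not $\sigma^2\lambda_{\ell_1,\ell_2}^{-\alpha}(1-\ee^{-\lambda_{\ell_1,\ell_2}\Delta})$; the spatial difference of $e^{(1)}_{\ell_1}$ produces a factor $2\cos(\pi\ell_1\bar y_j)\sin(\pi\ell_1\delta/2)$, not $(\cos(\pi\ell_1\delta)-1)$, before squaring; and the initial-value part is a squared sum over modes, so its cross terms do not vanish — they are controlled by the Schwarz inequality against $\|A_\theta^{(1+\alpha)/2}\xi\|^2$, after which the geometric decay in $i$ gives $\sum_i R_{i,j,k}=O(\Delta^\alpha)$).

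The genuine gap is the step where you replace the oscillating midpoint factors ($\cos^2(\pi\ell_1\bar y_j)$, $\cos^2(\pi\ell_2\bar z_k)$, and the cross terms generated by the exponential weights $\ee^{-\kappa y/2}$, $\ee^{-\eta z/2}$ inside the differences) by their mean $1/2$ and by the factor $\ee^{-\kappa(\widetilde y_{j-1}+\widetilde y_j)/2}\ee^{-\eta(\widetilde z_{k-1}+\widetilde z_k)/2}$, with error absorbed into $O(\Delta^{1+\alpha})$. This is not a termwise approximation, and since the Proposition is a statement for each fixed $(j,k)$ there is no averaging over $j,k$ available; the error is small only because the oscillatory lattice sums $\sum_{\ell_1,\ell_2} f_\alpha(\lambda_{\ell_1,\ell_2}\Delta)(\cdots)\cos(2\pi\ell_1\bar y_j)\cdots$ can be bounded by summation-by-parts (Dirichlet-kernel) estimates of the form $1/\bigl((y\land(2-y))(z\land(2-z))\bigr)$, applied after separating the weighted eigenfunction differences into a non-oscillating part and oscillating parts via the identity \eqref{eq-7-1}. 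This is precisely the content of the paper's Lemmas \ref{lem2}–\ref{lem9}, which constitute the bulk of the proof, and it is also where the restriction to $[b,1-b]$ enters: not, as you suggest, to keep the exponential weights nondegenerate (they are harmless on $[0,1]$), but because the oscillatory bounds blow up as the midpoints approach the boundary and must be kept uniform in $j,k$. You flag the uniform error control as the main obstacle, but without this mechanism the claimed $O(\Delta^{1+\alpha})$ remainder — and hence the asserted form of $R_{i,j,k}$ — is unsupported.
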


Let $\widetilde T_{i,j,k}X=T_{i,j,k}X+T_{i+1,j,k}X$ and 
$\nu=(\kappa, \eta, \theta_2, \sigma^2)$.
For $\alpha \in (0,2)$ and 
$\delta/\sqrt{\Delta} \equiv r \in(0,\infty)$, 
we introduce the contrast function 
\begin{align*}
K_{m,N}(\nu) &= 
\frac{1}{m}\sum_{k=1}^{m_2}\sum_{j=1}^{m_1} 
\Biggl\{
\frac{1}{N\Delta^{\alpha}}\sum_{i=1}^{N} (T_{i,j,k}X)^2
-f_{r,\alpha}\Bigl(\frac{\widetilde y_{j-1}+\widetilde y_j}{2},
\frac{\widetilde z_{k-1}+\widetilde z_k}{2}:\nu\Bigr)
\Biggr\}^2
\\
&\qquad+\frac{1}{m}\sum_{k=1}^{m_2}\sum_{j=1}^{m_1} 
\Biggl\{
\frac{1}{N(2\Delta)^{\alpha}}\sum_{i=1}^{N-1} (\widetilde T_{i,j,k}X)^2
-f_{r/\sqrt{2},\alpha}\Bigl(\frac{\widetilde y_{j-1}+\widetilde y_j}{2},
\frac{\widetilde z_{k-1}+\widetilde z_k}{2}:\nu\Bigr)
\Biggr\}^2,
\end{align*}
where 
$f_{r,\alpha}(y,z:\nu)=\sigma^2\ee^{-\kappa y}\ee^{-\eta z}\psi_{r,\alpha}(\theta_2)$.
Set
\begin{equation*}
\underline{\theta_2}(r,\alpha)=
\begin{cases}
-\frac{r^2}{8\log(2^{\alpha/2}-1)}, & \alpha \in (0,1),
\\
0, & \alpha \in [1,2).
\end{cases}
\end{equation*}
Let $\Xi$ be a compact convex subset of 
$\mathbb R^2 \times (\underline{\theta_2}(r,\alpha),\infty) \times(0,\infty)$
and define the minimum contrast estimator of $\nu$ by
\begin{equation*}
\hat\nu 
= \underset{\nu \in \Xi}{\mathrm{argmin}}\,K_{m,N}(\nu).
\end{equation*}

\begin{thm}\label{th1}
Under [A1], as $m \to \infty$ and $N \to \infty$, 
\begin{equation*}
\sqrt{m N}(\hat\nu-\nu^*) = O_p(1).
\end{equation*}
\end{thm}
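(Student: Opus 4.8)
The plan is to establish a standard $M$-estimation (minimum contrast) result: show that the contrast function $K_{m,N}$ converges, uniformly over the compact set $\Xi$, to a limit function that is uniquely minimized at $\nu^*$, so that consistency of $\hat\nu$ follows; then obtain the rate from a second-order Taylor expansion of $K_{m,N}$ around $\nu^*$, controlling the gradient and Hessian. Concretely, writing
\[
Z_{j,k}^{(1)} = \frac{1}{N\Delta^\alpha}\sum_{i=1}^N (T_{i,j,k}X)^2,
\qquad
Z_{j,k}^{(2)} = \frac{1}{N(2\Delta)^\alpha}\sum_{i=1}^{N-1}(\widetilde T_{i,j,k}X)^2,
\]
Proposition \ref{prop1} identifies $\EE[Z_{j,k}^{(1)}]$ as $f_{r,\alpha}$ evaluated at the midpoint $((\widetilde y_{j-1}+\widetilde y_j)/2, (\widetilde z_{k-1}+\widetilde z_k)/2)$ with parameter $\nu^*$, up to the correction term $\frac1{N\Delta^\alpha}\sum_i R_{i,j,k}=O(\Delta)$ and $O(\Delta)$ remainders; the same holds for $Z_{j,k}^{(2)}$ with $r/\sqrt2$ (since the doubled temporal increment corresponds to step $2\Delta$, and $\delta/\sqrt{2\Delta}=r/\sqrt2$). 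So the bias in each summand is $O(\Delta)=O(1/N)$, which is negligible at the claimed rate.

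The heart of the argument is a variance/covariance estimate for the fluctuations $Z_{j,k}^{(\ell)}-\EE[Z_{j,k}^{(\ell)}]$. First I would fix $(j,k)$ and analyze the time average $\frac1N\sum_i (T_{i,j,k}X)^2$: the sequence $(T_{i,j,k}X)_i$ is a Gaussian sequence (conditionally on $\xi$, but $\xi$ is non-random by [A1]) whose autocovariances decay fast enough in $|i-i'|$ — this is the one-dimensional-in-time analogue of the mixing/decorrelation estimates that underlie the stochastic-heat-equation literature, and it gives $\VV\big(\frac1N\sum_i (T_{i,j,k}X)^2\big)=O(1/N)$, hence $\VV(Z_{j,k}^{(\ell)})=O(1/N)$. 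Second, and this is the step I expect to be the main obstacle, I need the spatial decorrelation: the covariance $\cov(Z_{j,k}^{(\ell)},Z_{j',k'}^{(\ell)})$ must be small when $(j,k)\neq(j',k')$, uniformly, so that $\VV\big(\frac1m\sum_{j,k} Z_{j,k}^{(\ell)}\big)=O\big(\frac1{mN}\big)$ — i.e. the spatially thinned double increments $D_{j,k}X$ behave like a weakly-dependent array in the spatial index. This requires delicate bounds on $\EE[D_{j,k}X(t)\,D_{j',k'}X(s)]$ coming from the off-diagonal decay of the covariance kernel of the solution, using the $b\le \widetilde y_j\le 1-b$ restriction (to stay away from the boundary) and $\delta/\sqrt\Delta\equiv r$; it is the two-dimensional, thinned-data counterpart of the covariance computations in Hildebrandt–Trabs \cite{Hildebrandt_Trabs2021} and in Tonaki et al.\,\cite{TKU2022arXiv1}, and combining the temporal and spatial decorrelations to get the product rate $1/(mN)$ is where the bulk of the technical work lies.

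Granting these moment bounds, the remainder is routine. Define $G_{m,N}(\nu) = \frac1m\sum_{j,k}\big\{g_{j,k}^{(1)}(\nu)\big\}^2 + \frac1m\sum_{j,k}\big\{g_{j,k}^{(2)}(\nu)\big\}^2$ with $g_{j,k}^{(\ell)}(\nu)$ the deterministic midpoint-evaluated discrepancy $f_{\cdot,\alpha}(\cdots:\nu^*)-f_{\cdot,\alpha}(\cdots:\nu)$; a Riemann-sum argument shows $G_{m,N}(\nu)\to \int_{[b,1-b]^2}\{(f_{r,\alpha}(y,z:\nu^*)-f_{r,\alpha}(y,z:\nu))^2 + (f_{r/\sqrt2,\alpha}(y,z:\nu^*)-f_{r/\sqrt2,\alpha}(y,z:\nu))^2\}\,\dd y\,\dd z=: K(\nu)$, which vanishes iff $\nu=\nu^*$ (identifiability: matching $\sigma^2\ee^{-\kappa y}\ee^{-\eta z}\psi_{r,\alpha}(\theta_2)$ and its $r/\sqrt2$-version as functions of $(y,z)$ forces $\kappa,\eta$ equal, then the two values of $\psi$ pin down $\theta_2$ via its monotonicity on $(\underline{\theta_2}(r,\alpha),\infty)$ — this is exactly why $\Xi$ is taken inside that half-line — and then $\sigma^2$). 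The moment bounds give $\sup_{\nu\in\Xi}|K_{m,N}(\nu)-G_{m,N}(\nu)|=o_p(1)$, so $\hat\nu\pto\nu^*$. Then expanding $0=\partial_\nu K_{m,N}(\hat\nu)$ about $\nu^*$, one checks $\partial_\nu K_{m,N}(\nu^*)=O_p(1/\sqrt{mN})$ (its deterministic part is $O(\Delta)=O(1/N)$ by the bias bound, its stochastic part is $O_p(1/\sqrt{mN})$ by the variance bound) and $\partial_\nu^2 K_{m,N}(\nu)\to \partial_\nu^2 K(\nu)$ uniformly near $\nu^*$ with $\partial_\nu^2 K(\nu^*)$ positive definite (again using identifiability and smoothness of $\psi_{r,\alpha}$ in $\theta_2$, which follows from dominated convergence applied to \eqref{psi}); inverting the Hessian yields $\sqrt{mN}(\hat\nu-\nu^*)=O_p(1)$.
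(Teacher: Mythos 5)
Your outline follows the same route as the paper's proof (mean-value expansion of $\pd_\nu K_{m,N}$ around $\nu^*$, consistency via uniform convergence plus identifiability, an $O_p(1/\sqrt{mN})$ bound on the gradient splitting bias and fluctuation, and a nonsingular limiting Hessian), so the architecture is not in question; the issue is that both load-bearing technical ingredients are left unproven, and one of them is justified by an argument that does not work.

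First, the joint temporal--spatial decorrelation you ``grant'' is not a routine adaptation: the paper devotes Lemmas \ref{lem2}--\ref{lem10} to it, culminating in the bound
$\cov[T_{i,j,k}X,T_{i',j',k'}X]=O\bigl(\tfrac{\Delta^{\alpha}}{|i-i'|+1}\bigl(\Delta+\tfrac{1}{(|j-j'|+1)(|k-k'|+1)}\bigr)\bigr)+O\bigl(\tfrac{\Delta^{\alpha+1/2}}{|i-i'|+1}\bigl(\tfrac{\ind_{\{j\neq j'\}}}{|j-j'|+1}+\tfrac{\ind_{\{k\neq k'\}}}{|k-k'|+1}\bigr)\bigr)$,
whence $\sum_{i,i',j,j',k,k'}\cov[T_{i,j,k}X,T_{i',j',k'}X]^2=O(mN\Delta^{2\alpha})$; this rests on two-dimensional Fourier-series summation bounds (Lemmas \ref{lem4}, \ref{lem5}) that have no one-dimensional counterpart in Hildebrandt--Trabs, so citing that paper does not close the gap. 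You do flag this as the main obstacle, which is fair, but as written the proposal proves none of it. Second, and more seriously, your Step-5 analogue is a non sequitur: positive definiteness of $V(\nu^*)$ (equivalently of $\pd_\nu^2K(\nu^*,\nu^*)$) does not follow from ``identifiability and smoothness of $\psi_{r,\alpha}$''. After eliminating $\kappa,\eta$, nondegeneracy in the $(\theta_2,\sigma^2)$ block requires precisely $\bigl(\psi_{r,\alpha}(\theta_2)/\psi_{r/\sqrt2,\alpha}(\theta_2)\bigr)'\neq0$, and the same ratio's injectivity is what pins down $\theta_2$ in the identifiability step (not ``monotonicity of $\psi$'' and certainly not dominated convergence applied to \eqref{psi}); the paper establishes this through the strict inequality $\psi_\alpha'(t)\psi_\alpha(2t)-2\psi_\alpha(t)\psi_\alpha'(2t)>0$ in Lemmas \ref{lem11} and \ref{lem12}, a genuinely delicate Bessel/Gamma-function computation, and it is exactly here that the restriction $\theta_2>\underline{\theta_2}(r,\alpha)$ for $\alpha\in(0,1)$ is needed. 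Finally, a point of precision: the bias is not ``negligible at the claimed rate''. Since $\sum_{i=1}^N R_{i,j,k}=O(\Delta^\alpha)$ only, the deterministic part of $\sqrt{mN}\,\pd_\nu K_{m,N}(\nu^*)$ is of exact order $\sqrt{m/N}=O(1)$ under $m\asymp N$; this is why the theorem asserts only boundedness in probability and no asymptotic normality (cf.\ Remark \ref{rmk1}(ii)). Your later accounting ($O(\Delta)$ deterministic part versus $O_p(1/\sqrt{mN})$ stochastic part) happens to be compatible with the $O_p(1)$ conclusion, but the claim of negligibility misreads what Proposition \ref{prop1} delivers.
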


\begin{rmk}\label{rmk1}
\ 
\begin{enumerate}
\item[(i)]
By using the estimators 
$\hat \nu = (\hat \kappa, \hat \eta, \hat \theta_2, \hat \sigma^2)$, 
the coefficient parameters $\theta_1$ and $\eta_1$ can be estimated by 
$\hat \theta_1 = \hat \kappa \hat \theta_2$ and
$\hat \eta_1 = \hat \eta \hat \theta_2$, respectively. 
Therefore, the estimators of the coefficient parameters 
$(\theta_2, \theta_1, \eta_1, \sigma^2)$ are bounded in probability 
at the rate $\sqrt{mN}$.
Note that $\sqrt{mN}=O(N)$.
Since the estimators proposed by Tonaki et al.\,\cite{TKU2022arXiv1}
have asymptotic normality with the convergence rate $\sqrt{n}$ for some $n \le N$, 
the estimators we proposed have a faster convergence rate 
than that of Tonaki et al.\,\cite{TKU2022arXiv1}.

\item[(ii)]
According to Step 3 in the proof of Theorem \ref{th1}, 
we find that our estimators do not have asymptotic normality unlike
Hildebrandt and Trabs \cite{Hildebrandt_Trabs2021}
because Proposition \ref{prop1} implies that
\begin{equation*}
\sum_{i=1}^{N} 
\biggl\{
\EE[(T_{i,j,k}X)^2]
-\Delta^{\alpha} f_{r,\alpha}\Bigl(\frac{\widetilde y_{j-1}+\widetilde y_j}{2},
\frac{\widetilde z_{k-1}+\widetilde z_k}{2}:\nu\Bigr)
\biggr\}
\end{equation*}
is bounded at the rate $\Delta^\alpha$ uniformly in $j,k$, 
and it does not converge to $0$ at the rate $\Delta^\alpha$.
This is caused by the fact that the approximation error cannot be neglected 
due to the increase in spatial dimension.

\end{enumerate}
\end{rmk}

\subsection{Adaptive estimation of the coefficient parameters}
\label{sec2-2}
In this subsection, we construct adaptive estimators of the coefficient parameters 
$(\theta_0, \theta_1,\eta_1, \theta_2, \sigma^2)$ with asymptotic normality 
based on the minimum contrast estimators and the coordinate processes.

Let $n \le N$ and $\widetilde t_i = \lfloor\frac{N}{n}\rfloor\frac{i}{N}$
for $i = 0,\ldots,n$.
We set the thinned data $\mathbb X_{M,n}^{(2)}
=\{X_{\widetilde t_i}(y_j,z_k); i=0,\ldots,n, j=0,\ldots,M_1, k=0,\ldots, M_2\}$
with respect to time of $\mathbb X_{M,N}$.

Since 
\begin{equation*}
x_{\ell_1,\ell_2}(t)
=2\int_0^1 \int_0^1 X_t(y,z)\sin(\pi \ell_1 y) \sin(\pi \ell_2 z)
\ee^{\kappa y/2}\ee^{\eta z/2} \dd y \dd z,
\end{equation*}
we set the approximate coordinate process by
\begin{equation*}
\hat x_{\ell_1,\ell_2}(t) = 
\frac{2}{M} \sum_{j=1}^{M_1}\sum_{k=1}^{M_2}
X_t(y_j,z_k)\sin(\pi \ell_1 y_j)\sin(\pi \ell_2 z_k)
\ee^{\hat \kappa y_j/2}\ee^{\hat \eta z_k/2}, 
\end{equation*}
and define the estimator of 
$\sigma_{\ell_1,\ell_2}:=\sigma\lambda_{\ell_1,\ell_2}^{-\alpha/2}$ in \eqref{OU}
as 
\begin{equation*}
\hat\sigma_{\ell_1,\ell_2}^2 = 
\sum_{i=1}^n (\hat x_{\ell_1,\ell_2}(\widetilde t_i)
-\hat x_{\ell_1,\ell_2}(\widetilde t_{i-1}))^2.
\end{equation*}
We propose the adaptive estimators of $\lambda_{\ell_1,\ell_2}$ and
$(\theta_0, \theta_1, \eta_1, \theta_2, \sigma^2)$ as follows.
\begin{equation*}
\check \lambda_{\ell_1,\ell_2} 
= \biggl(\frac{\hat\sigma^2}{\hat\sigma_{\ell_1,\ell_2}^2}\biggr)^{1/\alpha},
\quad
\check \theta_0 = -\hat\lambda_{1,1}+
\biggl(\frac{\hat\kappa^2+\hat\eta^2}{4}+2\pi^2\biggr)\hat\theta_2,
\end{equation*}
\begin{equation*}
\check \theta_2 = \frac{\check \lambda_{1,2}-\check \lambda_{1,1}}{3\pi^2},
\quad
\check \theta_1 = \hat \kappa \check \theta_2,
\quad
\check \eta_1 = \hat \eta \check \theta_2,
\quad
\check \sigma^2 = \frac{\hat \sigma^2}{\hat \theta_2} \check \theta_2.
\end{equation*}
Let
\begin{equation*}
\mathcal J = 2
\begin{pmatrix}
J_{1,1} & J_{1,2}
\\
J_{1,2}^\TT & J_{2,2}
\end{pmatrix},
\end{equation*}
where
\begin{equation*}
J_{1,1} = (\lambda_{1,1}^*)^2, 
\quad
J_{1,2} 
=\frac{-(\lambda_{1,1}^*)^2}{3\pi^2\theta_2^*} 
(\theta_1^*, \eta_1^*, \theta_2^*, (\sigma^*)^2),
\end{equation*}
\begin{equation*}
J_{2,2} 
=
\frac{(\lambda_{1,1}^*)^2 + (\lambda_{1,2}^*)^2}{9\pi^4 (\theta_2^*)^2 \alpha^2}
\begin{pmatrix}
(\theta_1^*)^2 & \theta_1^* \eta_1^* & \theta_1^* \theta_2^* & \theta_1^* (\sigma^*)^2
\\
\theta_1^* \eta_1^* & (\eta_1^*)^2 & \eta_1^* \theta_2^* & \eta_1^* (\sigma^*)^2
\\
\theta_1^* \theta_2^* & \eta_1^* \theta_2^* & (\theta_2^*)^2 & \theta_2^* (\sigma^*)^2
\\
\theta_1^* (\sigma^*)^2 & \eta_1^* (\sigma^*)^2 & \theta_2^* (\sigma^*)^2 & (\sigma^*)^4
\\
\end{pmatrix}
\end{equation*}
and $\TT$ denotes the transpose.

\begin{thm}\label{th2}
Let $\alpha\in(0,2)$ and $\delta/\sqrt{\Delta} \equiv r \in(0,\infty)$.
Under [A1], the followings hold.
\begin{itemize}
\item[(i)]
If $\frac{n^{1-\alpha+\tau}}{(M_1 \land M_2)^{2\tau}}\to0$
for some $0 < \tau \le 1$ such that $\tau <\alpha$, then
$\check \theta_0 \pto \theta_0^*$ as 
$M_1 \to \infty$, $M_2 \to \infty$ and $n \to \infty$.

\item[(ii)]
If $\frac{n^{2-\alpha+\tau}}{(M_1 \land M_2)^{2\tau}}\to0$
for some $0 < \tau \le 1$ such that $\tau <\alpha$, then
as $M_1 \to \infty$, $M_2 \to \infty$ and $n \to \infty$,
\begin{equation}\label{eq-Th2.3}
\sqrt{n}
\begin{pmatrix}
\check \theta_0 - \theta_0^*
\\
\check \theta_1 - \theta_1^*
\\
\check \eta_1 - \eta_1^*
\\
\check \theta_2 - \theta_2^*
\\
\check \sigma^2 - (\sigma^*)^2
\end{pmatrix}
\dto N (0, \mathcal J).
\end{equation}

\end{itemize}
\end{thm}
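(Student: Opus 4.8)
The plan is to build $\check\theta_0,\check\theta_1,\check\eta_1,\check\theta_2,\check\sigma^2$ out of three ingredients whose joint behavior we already control: the minimum contrast estimators $(\hat\kappa,\hat\eta,\hat\theta_2,\hat\sigma^2)$ from Theorem \ref{th1}, which converge at rate $\sqrt{mN}=O(N)$ and therefore are $o_p(n^{-1/2})$ for the smaller scale $n$; the estimated noise levels $\hat\sigma_{\ell_1,\ell_2}^2$ for $(\ell_1,\ell_2)\in\{(1,1),(1,2)\}$, which are quadratic-variation type statistics of the approximate coordinate processes; and the resulting $\check\lambda_{\ell_1,\ell_2}=(\hat\sigma^2/\hat\sigma_{\ell_1,\ell_2}^2)^{1/\alpha}$. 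First I would establish a stochastic expansion for $\hat\sigma_{\ell_1,\ell_2}^2$. Write $\hat x_{\ell_1,\ell_2}(t)=x_{\ell_1,\ell_2}(t)+(\text{Riemann-sum error})+(\text{error from }\hat\kappa-\kappa^*,\hat\eta-\eta^*)$. For the true coordinate process, which is an Ornstein--Uhlenbeck process by \eqref{OU}, the normalized realized quadratic variation $\sum_{i=1}^n(\Delta_i x_{\ell_1,\ell_2})^2$ satisfies a CLT: $\sqrt n\bigl(\sum_i(\Delta_i x_{\ell_1,\ell_2})^2-\sigma_{\ell_1,\ell_2}^2\bigr)\dto N(0,2\sigma_{\ell_1,\ell_2}^4)$ (the drift contributes only lower-order terms because the step size $\lfloor N/n\rfloor/N\asymp 1/n\to0$). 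This is the source of all the asymptotic variance, so the $(\lambda_{1,1}^*)^2$ and $(\lambda_{1,2}^*)^2$ factors in $\mathcal J$ will come from $\sigma_{\ell_1,\ell_2}^2=(\sigma^*)^2(\lambda_{\ell_1,\ell_2}^*)^{-\alpha}$ and the $1/\alpha$ Jacobian of $x\mapsto x^{1/\alpha}$.

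Next I would show the two extra error terms are negligible at rate $\sqrt n$. The spatial-discretization error in $\hat x_{\ell_1,\ell_2}$ against the integral defining $x_{\ell_1,\ell_2}$ must be controlled uniformly over the $n$ time points, and after squaring and summing over $i$ this is where the hypotheses $n^{1-\alpha+\tau}/(M_1\wedge M_2)^{2\tau}\to0$ (for consistency of $\check\theta_0$) and $n^{2-\alpha+\tau}/(M_1\wedge M_2)^{2\tau}\to0$ (for the $\sqrt n$-CLT) enter: the exponent $1-\alpha$ versus $2-\alpha$ reflects that consistency needs the summed squared error to be $o(1)$ while the CLT needs it to be $o(n^{-1/2})$ relative to $\sum_i(\Delta_i\hat x)^2=O_p(1)$, and the regularity index $\tau$ quantifies the Hölder/Sobolev smoothing gained from $\|A_\theta^{(1+\alpha)/2}\xi\|<\infty$ in [A1] together with the spectral decay. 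The error from plugging in $\hat\kappa,\hat\eta$ rather than $\kappa^*,\eta^*$ is $O_p(N^{-1})=o_p(n^{-1/2})$ by Theorem \ref{th1} since $n\le N$, after a mean-value expansion of the exponentials $\ee^{\hat\kappa y_j/2}$. Hence $\sqrt n(\hat\sigma_{1,1}^2-\sigma_{1,1}^2,\hat\sigma_{1,2}^2-\sigma_{1,2}^2)$ is asymptotically bivariate normal with a diagonal-ish covariance determined by the joint quadratic variation of the two (independent) OU processes $x_{1,1}$ and $x_{1,2}$; independence of the driving Brownian motions makes the cross-covariance vanish, which is why $J_{2,2}$ is a sum $(\lambda_{1,1}^*)^2+(\lambda_{1,2}^*)^2$ of the two individual contributions.

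The final step is the delta method applied to the smooth map sending $(\hat\sigma^2,\hat\kappa,\hat\eta,\hat\theta_2,\hat\sigma_{1,1}^2,\hat\sigma_{1,2}^2)$ to $(\check\theta_0,\check\theta_1,\check\eta_1,\check\theta_2,\check\sigma^2)$. Because $(\hat\kappa,\hat\eta,\hat\theta_2,\hat\sigma^2)=(\kappa^*,\eta^*,\theta_2^*,(\sigma^*)^2)+o_p(n^{-1/2})$, only the perturbations of $\hat\sigma_{1,1}^2,\hat\sigma_{1,2}^2$ survive at scale $\sqrt n$; I would compute the Jacobian of $(\check\lambda_{1,1},\check\lambda_{1,2})\mapsto(\check\theta_0,\check\theta_1,\check\eta_1,\check\theta_2,\check\sigma^2)$ — note $\check\theta_2=(\check\lambda_{1,2}-\check\lambda_{1,1})/(3\pi^2)$ supplies the $1/(3\pi^2\theta_2^*)$ and $1/(9\pi^4(\theta_2^*)^2)$ prefactors, while $\check\theta_0$ depends on $\check\lambda_{1,1}$ alone, producing the rank-one block $J_{1,1}$ and the coupling $J_{1,2}$ — compose it with the $\sqrt n$-limit law from Step 2, and match the resulting $5\times5$ covariance to $\mathcal J$; consistency in part (i) follows from the same expansion keeping only first-order (consistency) control on $\hat\sigma_{1,1}^2$. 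I expect the main obstacle to be Step 2: obtaining a sharp, uniform-in-$i$ bound on the spatial Riemann-sum error of $\hat x_{\ell_1,\ell_2}(\widetilde t_i)-\hat x_{\ell_1,\ell_2}(\widetilde t_{i-1})$ with exactly the exponent in $M_1\wedge M_2$ that makes the stated rate conditions tight, since this requires carefully exploiting the regularity of both the initial condition and the stochastic convolution and interacts delicately with the time-increment scale $1/n$.
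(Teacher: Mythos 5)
Your proposal follows essentially the same route as the paper's proof: decompose $\hat\sigma_{\ell_1,\ell_2}^2$ minus the realized quadratic variation of the true coordinate process into the plug-in error from $(\hat\kappa,\hat\eta)$ and the spatial Riemann-sum/discretization errors (the paper's $\mathcal B_{1,i},\mathcal B_{2,i},\mathcal B_{3,i}$), show these are $o_p(n^{-1/2})$ exactly under the stated rate conditions, invoke the CLT $\sqrt{n}(\hat\sigma_{1,1}^2-(\sigma_{1,1}^*)^2,\hat\sigma_{1,2}^2-(\sigma_{1,2}^*)^2)\dto N(0,2\,\mathrm{diag}((\sigma_{1,1}^*)^4,(\sigma_{1,2}^*)^4))$ for the independent OU coordinates, and finish by the delta method, with the minimum contrast estimators negligible at scale $\sqrt{n}$ since $\sqrt{mN}=O(N)$ dominates $\sqrt{n}$. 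The identification of the Jacobian structure (only $\hat\sigma_{1,1}^2,\hat\sigma_{1,2}^2$ surviving, $\check\theta_2=(\check\lambda_{1,2}-\check\lambda_{1,1})/(3\pi^2)$ giving the $1/(3\pi^2\theta_2^*)$ factors, $\check\theta_0$ driven by $\check\lambda_{1,1}$ alone) matches the paper's computation of $\mathcal J$.
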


\begin{rmk}\label{rmk2}
\ 
\begin{enumerate}
\item[(i)]
The condition on $n \le N$ is more relaxed 
than Theorem 3.3 in Tonaki et al.\,\cite{TKU2022arXiv1}. 
In other words, 
if $n_1$ and $n_2$ $(n_1, n_2  \le N)$ satisfy
the conditions of Theorem 3.3 in Tonaki et al.\,\cite{TKU2022arXiv1} and Theorem \ref{th2},
respectively, then $n_1 \le n_2$,
which implies that 
the convergence rate of our estimators is faster than or equal to that of
the estimators of Tonaki et al.\,\cite{TKU2022arXiv1}.

\item[(ii)]
Set $2\widetilde J_{1,1}$ as the asymptotic variance 
of the estimator of $\theta_0$
in Theorem 3.3 in Tonaki et al.\,\cite{TKU2022arXiv1}. 
We obtain from
\begin{align*}
\widetilde J_{1,1} - J_{1,1}
&=
\frac{(\lambda_{1,1}^*)^2+(\lambda_{1,2}^*)^2}{9\pi^4 (\theta_2^*)^2}
\Biggl\{
\biggl(
\Bigl(\frac{\lambda_{1,1}^*}{\alpha}-\frac{\theta_0^*}{1-\alpha}\Bigr)^2
+\frac{3\pi^2\theta_2^*(\lambda_{1,1}^*)^2}
{\alpha((\lambda_{1,1}^*)^2+(\lambda_{1,2}^*)^2)}
\biggr)^2
\\
&\qquad+
\frac{(\lambda_{1,1}^*)^2}
{((\lambda_{1,1}^*)^2+(\lambda_{1,2}^*)^2)^2}
\biggl(
\Bigl(\frac{1}{\alpha^2}-1\Bigr)(\lambda_{1,2}^*)^2
-(\lambda_{1,1}^*)^2
\biggr)
\Biggr\}
\\
&\ge
(\lambda_{1,1}^*)^2 \Bigl(\frac{1}{\alpha^2}-1\Bigr)
\end{align*}
that $\widetilde J_{1,1} \ge J_{1,1}$ for $\alpha \in (0,1)$.

\item[(iii)]
Let $2\widetilde J_{2,2}$ be the asymptotic variance 
of the estimator of $(\theta_1, \eta_1, \theta_2, \sigma^2)$ 
in Theorem 3.3 in Tonaki et al.\,\cite{TKU2022arXiv1}. 
We have
\begin{equation*}
\widetilde J_{2,2}-J_{2,2}
=\frac{1}{9\pi^4(\theta_2^*)^2}\Bigl(\frac{1}{(1-\alpha)^2}-\frac{1}{\alpha^2}\Bigr)
(\theta_1^*, \eta_1^*, \theta_2^*, (\sigma^*)^2)^\TT
(\theta_1^*, \eta_1^*, \theta_2^*, (\sigma^*)^2),
\end{equation*}
which implies that 
$\widetilde J_{2,2}-J_{2,2}$ is non-negative definite if $\alpha \in [1/2,1)$.

\item[(iv)]
$\theta_0$ can be estimated when the driving noise is defined by \eqref{QW}. 
On the other hand,  $\theta_0$ cannot be estimated if the driving noise is a $Q$-Wiener process
such as \eqref{QW2} below.
\end{enumerate}
\end{rmk}

\subsection{
Estimation of the coefficient parameters in SPDE driven by $Q_2$-Wiener process}
\label{sec2-3}
We next consider SPDE \eqref{2d_spde} driven by the $Q_2$-Wiener process $W_t^{Q_2}$ 
given by
\begin{equation}\label{QW2}
\langle W_t^{Q_2}, u \rangle = 
\sum_{\ell_1,\ell_2\ge1} \mu_{\ell_1,\ell_2}^{-\alpha/2} 
\langle u, e_{\ell_1,\ell_2}\rangle  w_{\ell_1,\ell_2}(t)
\end{equation}
for $u \in L^2(D)$, $t \ge 0$ and $\alpha \in (0,2)$,
where $\mu_{\ell_1, \ell_2} = \pi^2(\ell_1^2 +\ell_2^2) + \mu_0$
and $\mu_0 \in (-2\pi^2, \infty)$. 
$\mu_0$ may or may not be known, 
the parameter space of $\mu_0$ is a compact convex subset of $(-2\pi^2, \infty)$
and the true value $\mu_0^*$ belongs to its interior.
In the case that the driving noise is the $Q_2$-Wiener process,
$\theta_0$ cannot be estimated, but
$(\theta_1, \eta_1, \theta_2, \sigma^2, \mu_0)$ 
can be estimated.

When the driving noise is the $Q_2$-Wiener process, 
under the same observation scheme as above, 
the second order moment of triple increments $\EE[(T_{i,j,k}X)^2]$
is achieved by replacing $\psi_{r,\alpha}(\theta_2)$ in Proposition \ref{prop1} with
$\widetilde \psi_{r,\alpha}(\theta_2) = \theta_2^\alpha \psi_{r,\alpha}(\theta_2)$,
see \eqref{eq-999-0} below. Therefore, we define the contrast function
\begin{align*}
\widetilde K_{m,N}(\nu) &= 
\frac{1}{m}\sum_{k=1}^{m_2}\sum_{j=1}^{m_1} 
\Biggl\{
\frac{1}{N\Delta^{\alpha}}\sum_{i=1}^{N} (T_{i,j,k}X)^2
-\widetilde f_{r,\alpha}\Bigl(\frac{\widetilde y_{j-1}+\widetilde y_j}{2},
\frac{\widetilde z_{k-1}+\widetilde z_k}{2}:\nu\Bigr)
\Biggr\}^2
\\
&\qquad+\frac{1}{m}\sum_{k=1}^{m_2}\sum_{j=1}^{m_1} 
\Biggl\{
\frac{1}{N(2\Delta)^{\alpha}}\sum_{i=1}^{N-1} (\widetilde T_{i,j,k}X)^2
-\widetilde f_{r/\sqrt{2},\alpha}\Bigl(\frac{\widetilde y_{j-1}+\widetilde y_j}{2},
\frac{\widetilde z_{k-1}+\widetilde z_k}{2}:\nu\Bigr)
\Biggr\}^2,
\end{align*}
where $\widetilde f_{r,\alpha}(y,z:\nu) 
=\sigma^2\ee^{-\kappa y}\ee^{-\eta z} \widetilde \psi_{r,\alpha}(\theta_2)$,
and set the minimum contrast estimator
$\tilde \nu = (\tilde \kappa, \tilde \eta, \tilde \theta_2, \tilde \sigma^2 )$ 
as follows. 
\begin{equation*}
\tilde \nu = \underset{\nu \in \Xi}{\mathrm{argmin}}\, \widetilde K_{m,N}(\nu).
\end{equation*}
\begin{thm}\label{th3}
Under [A1], $\sqrt{m N} (\tilde \nu -\nu^*) = O_p(1)$ 
as $m \to \infty$ and $N \to \infty$.
\end{thm}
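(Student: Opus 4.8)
The plan is to follow the proof of Theorem \ref{th1} essentially verbatim, the only structural change being that the limiting coefficient $\psi_{r,\alpha}(\theta_2)$ is replaced throughout by $\widetilde\psi_{r,\alpha}(\theta_2)=\theta_2^\alpha\psi_{r,\alpha}(\theta_2)$. First I would establish the $Q_2$-counterpart of Proposition \ref{prop1}, namely
\begin{equation*}
\EE[(T_{i,j,k}X)^2]
=\Delta^\alpha\sigma^2
\ee^{-\kappa(\widetilde y_{j-1}+\widetilde y_j)/2}
\ee^{-\eta(\widetilde z_{k-1}+\widetilde z_k)/2}
\widetilde\psi_{r,\alpha}(\theta_2)+R_{i,j,k}+O(\Delta^{1+\alpha}),
\end{equation*}
with $\sum_{i=1}^N R_{i,j,k}=O(\Delta^\alpha)$ uniformly in $j,k$ (this is \eqref{eq-999-0} in the proof section). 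Since $\xi$ is non-random, $T_{i,j,k}X$ is a centred Gaussian functional of the independent Ornstein--Uhlenbeck coordinates \eqref{OU}, whose driving variance is now $\sigma^2\mu_{\ell_1,\ell_2}^{-\alpha}$ instead of $\sigma^2\lambda_{\ell_1,\ell_2}^{-\alpha}$. Writing $\EE[(T_{i,j,k}X)^2]$ as the corresponding frequency series and passing to the limit as $\Delta\to0$, the dominant frequencies satisfy $\ell_1^2+\ell_2^2\asymp\Delta^{-1}$, so that $\mu_{\ell_1,\ell_2}^{-\alpha}=(\pi^2(\ell_1^2+\ell_2^2)+\mu_0)^{-\alpha}\sim(\pi^2(\ell_1^2+\ell_2^2))^{-\alpha}=\theta_2^{\alpha}\lambda_{\ell_1,\ell_2}^{-\alpha}(1+o(1))$; the extra factor $\theta_2^\alpha$ is exactly what turns $\psi_{r,\alpha}$ into $\widetilde\psi_{r,\alpha}$, while $\mu_0$ and the initial transient contribute only to $R_{i,j,k}$ and the $O(\Delta^{1+\alpha})$ remainder, precisely as in the proof of Proposition \ref{prop1}.

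With \eqref{eq-999-0} in hand the identifiability step reduces to that of Theorem \ref{th1}: matching the limit of $\widetilde K_{m,N}$ forces, from the $(y,z)$-dependence of $\widetilde f_{r,\alpha}(y,z:\nu)=\sigma^2\ee^{-\kappa y}\ee^{-\eta z}\widetilde\psi_{r,\alpha}(\theta_2)$, first $\kappa=\kappa^*$ and $\eta=\eta^*$, and then $\sigma^2\widetilde\psi_{r,\alpha}(\theta_2)=(\sigma^*)^2\widetilde\psi_{r,\alpha}(\theta_2^*)$ together with the same relation for $r/\sqrt2$; taking the ratio of these two equations eliminates $\sigma^2$ and reduces matters to the injectivity of $\theta_2\mapsto\psi_{r,\alpha}(\theta_2)/\psi_{r/\sqrt2,\alpha}(\theta_2)$ on $\Xi$ already used for Theorem \ref{th1}, whence $\theta_2=\theta_2^*$ and then $\sigma^2=(\sigma^*)^2$. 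The uniform convergence $\sup_{\nu\in\Xi}|\widetilde K_{m,N}(\nu)-\widetilde K(\nu)|\pto0$ and hence the consistency $\tilde\nu\pto\nu^*$ then follow exactly as in Theorem \ref{th1}, using the Gaussian moment and covariance estimates for $(T_{i,j,k}X)^2$ and $(\widetilde T_{i,j,k}X)^2$, whose dependence structure in $i$ and in $(j,k)$ is unchanged because $\mu_{\ell_1,\ell_2}^{-\alpha/2}\asymp(\ell_1^2+\ell_2^2)^{-\alpha/2}\asymp\lambda_{\ell_1,\ell_2}^{-\alpha/2}$.

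For the rate I would Taylor-expand $\nabla_\nu\widetilde K_{m,N}$ about $\nu^*$: from $\nabla\widetilde K_{m,N}(\tilde\nu)=0$ and consistency, $\sqrt{mN}(\tilde\nu-\nu^*)=-(\nabla^2\widetilde K_{m,N}(\bar\nu))^{-1}\sqrt{mN}\,\nabla\widetilde K_{m,N}(\nu^*)$ for some $\bar\nu$ on the segment between $\tilde\nu$ and $\nu^*$. The Hessian converges to a nonsingular matrix by the identifiability computation, so it remains to show $\nabla\widetilde K_{m,N}(\nu^*)=O_p((mN)^{-1/2})$. Splitting $\frac1{N\Delta^\alpha}\sum_{i=1}^N(T_{i,j,k}X)^2-\widetilde f_{r,\alpha}(\cdot:\nu^*)$ (and its $\widetilde T$-analogue) into its mean-zero part and its deterministic bias: the mean-zero part, summed against $\partial_\nu\widetilde f$ over the $m$ spatial cells, has variance $O((mN)^{-1})$ by the covariance bounds (weak dependence in time at the lag scale $\Delta$ and in space at the scale $\delta$), while by \eqref{eq-999-0} the bias is $O(\Delta)=O(N^{-1})=O((mN)^{-1/2})$ since $m=O(N)$ and $N=O(m)$; adding these gives $\nabla\widetilde K_{m,N}(\nu^*)=O_p((mN)^{-1/2})$, hence $\sqrt{mN}(\tilde\nu-\nu^*)=O_p(1)$. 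As in Remark \ref{rmk1}(ii), the non-vanishing bias at the scale $(mN)^{-1/2}$ is what prevents asymptotic normality.

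The main obstacle is the first step: carrying out the frequency-series-to-integral asymptotics for the $Q_2$-driven mild solution and verifying \eqref{eq-999-0} with the correct uniform (in $j,k$) control of $R_{i,j,k}$ and of the $O(\Delta^{1+\alpha})$ term, in particular confirming that $\mu_0$ affects only these lower-order remainders and not the leading coefficient $\widetilde\psi_{r,\alpha}(\theta_2)$ — which is exactly why $(\kappa,\eta,\theta_2,\sigma^2)$, but not $\mu_0$, is recovered from the triple increments. Once \eqref{eq-999-0} is in place, the consistency and rate arguments are mechanical adaptations of the corresponding steps in the proof of Theorem \ref{th1}.
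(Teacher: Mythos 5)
Your proposal is correct and follows essentially the same route as the paper: the paper likewise reduces Theorem \ref{th3} to the $Q_2$-analogue \eqref{eq-999-0} of Proposition \ref{prop1} (obtained via Lemma \ref{lem13}, a mean-value-theorem comparison of $\mu_{\ell_1,\ell_2}^{-\alpha}$ with $\theta_2^{\alpha}\lambda_{\ell_1,\ell_2}^{-\alpha}$, which is exactly the rigorous form of your dominant-frequency heuristic) together with the observation that $\widetilde\psi_{r,\alpha}/\widetilde\psi_{r/\sqrt{2},\alpha}=\psi_{r,\alpha}/\psi_{r/\sqrt{2},\alpha}$ is injective on $(\underline{\theta_2}(r,\alpha),\infty)$ by Lemmas \ref{lem11} and \ref{lem12}, and then repeats the argument of Theorem \ref{th1} unchanged. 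Your consistency, gradient-bias and Hessian steps, including the point that the bias at scale $(mN)^{-1/2}$ gives only tightness rather than asymptotic normality, match the paper's treatment.
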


Since the coordinate process of SPDE \eqref{2d_spde} driven by 
the $Q_2$-Wiener process is given by
\begin{equation*}
\dd x_{\ell_1,\ell_2}(t) =
-\lambda_{\ell_1,\ell_2} x_{\ell_1,\ell_2}(t)\dd t
+\sigma \mu_{\ell_1,\ell_2}^{-\alpha/2} \dd w_{\ell_1,\ell_2}(t),
\quad
x_{\ell_1,\ell_2}(0) = \langle \xi, e_{\ell_1,\ell_2} \rangle,
\end{equation*}
we use the approximate coordinate process 
\begin{equation*}
\tilde x_{\ell_1,\ell_2}(t) = 
\frac{2}{M} \sum_{j=1}^{M_1} \sum_{k=1}^{M_2} 
X_t(y_j,z_k) \sin(\pi \ell_1 y_j) \sin(\pi \ell_2 z_k) 
\ee^{\tilde \kappa y_j/2} \ee^{\tilde \eta z_k/2}
\end{equation*}
and estimate $\varsigma_{\ell_1,\ell_2} := \sigma \mu_{\ell_1,\ell_2}^{-\alpha/2}$ by
\begin{equation*}
\tilde \sigma_{\ell_1,\ell_2}^2 = 
\sum_{i=1}^n (\tilde x_{\ell_1,\ell_2}(\widetilde t_i)
-\tilde x_{\ell_1,\ell_2}(\widetilde t_{i-1}))^2.
\end{equation*}
When $\mu_0$ is known, the estimators of the coefficient parameters with asymptotic 
normality are constructed by 
\begin{equation*}
\bar \sigma^2 = \mu_{1,1}^{\alpha} \tilde \sigma_{1,1}^2, 
\quad 
\bar \theta_2 = \frac{\tilde \theta_2}{\tilde \sigma^2} \bar \sigma^2,
\quad 
\bar \theta_1 = \tilde \kappa \bar \theta_2,
\quad 
\bar \eta_1 = \tilde \eta \bar \theta_2,
\end{equation*}
and when $\mu_0$ is unknown, the estimators of the coefficient parameters with 
asymptotic normality and $\mu_0$ are defined as follows.
\begin{equation*}
\breve \sigma^2 = 
\Biggl\{
3\pi^2
\biggl(
\frac{1}{(\tilde \sigma_{1,2}^2)^{1/\alpha}} 
-\frac{1}{(\tilde \sigma_{1,1}^2)^{1/\alpha}}
\biggr)^{-1}
\Biggr\}^\alpha, 
\quad
\breve \mu_0 = 
\biggl( \frac{\breve \sigma ^2}{\tilde \sigma_{1,1}^2} \biggr)^{1/\alpha} - 2\pi^2,
\end{equation*}
\begin{equation*}
\breve \theta_2 = \frac{\tilde \theta_2}{\tilde \sigma^2} \breve \sigma^2,
\quad 
\breve \theta_1 = \tilde \kappa \breve \theta_2,
\quad 
\breve \eta_1 = \tilde \eta \breve \theta_2.
\end{equation*}
Let
\begin{equation*}
\mathcal K = 2
\begin{pmatrix}
(\theta_1^*)^2 & \theta_1^* \eta_1^* & \theta_1^* \theta_2^* & \theta_1^* (\sigma^*)^2
\\
\theta_1^* \eta_1^* & (\eta_1^*)^2 & \eta_1^* \theta_2^* & \eta_1^* (\sigma^*)^2
\\
\theta_1^* \theta_2^* & \eta_1^* \theta_2^* & (\theta_2^*)^2 & \theta_2^* (\sigma^*)^2
\\
\theta_1^* (\sigma^*)^2 & \eta_1^* (\sigma^*)^2 & \theta_2^* (\sigma^*)^2 & (\sigma^*)^4
\\
\end{pmatrix},
\quad
\mathcal L = \frac{2}{9\pi^4}
\begin{pmatrix}
L_{1,1} & L_{1,2}
\\
L_{1,2}^\TT & L_{2,2}
\end{pmatrix},
\end{equation*}
where
\begin{equation*}
L_{1,1} = \frac{2(\mu_{1,1}^*)^2(\mu_{1,2}^*)^2}{\alpha^2},
\quad
L_{1,2} =\frac{\mu_{1,1}^*\mu_{1,2}^* (\mu_{1,1}^* +\mu_{1,2}^*)}{\alpha}
(\theta_1^*, \eta_1^*, \theta_2^*, (\sigma^*)^2),
\end{equation*}
\begin{equation*}
L_{2,2} = ((\mu_{1,1}^*)^2 +(\mu_{1,2}^*)^2)
\begin{pmatrix}
(\theta_1^*)^2 & \theta_1^* \eta_1^* & \theta_1^* \theta_2^* & \theta_1^* (\sigma^*)^2
\\
\theta_1^* \eta_1^* & (\eta_1^*)^2 & \eta_1^* \theta_2^* & \eta_1^* (\sigma^*)^2
\\
\theta_1^* \theta_2^* & \eta_1^* \theta_2^* & (\theta_2^*)^2 & \theta_2^* (\sigma^*)^2
\\
\theta_1^* (\sigma^*)^2 & \eta_1^* (\sigma^*)^2 & \theta_2^* (\sigma^*)^2 & (\sigma^*)^4
\\
\end{pmatrix}.
\end{equation*}

\begin{thm}\label{th4}
Let $\alpha\in(0,2)$ and $\delta/\sqrt{\Delta} \equiv r \in(0,\infty)$.
Under [A1], the followings hold.

\begin{itemize}
\item[(a)] 
Suppose that $\mu_0$ is known.
If $\frac{n^{2-\alpha+\tau}}{(M_1 \land M_2)^{2\tau}}\to0$
for some $0 < \tau \le 1$ such that $\tau <\alpha$, then
as $M_1 \to \infty$, $M_2 \to \infty$ and $n \to \infty$,
\begin{equation*}
\sqrt{n}
\begin{pmatrix}
\bar \theta_1 - \theta_1^*
\\
\bar \eta_1 - \eta_1^*
\\
\bar \theta_2 - \theta_2^*
\\
\bar \sigma^2 - (\sigma^*)^2
\end{pmatrix}
\dto N (0, \mathcal K).
\end{equation*}

\item[(b)]
Suppose that $\mu_0$ is unknown.

\begin{itemize}
\item[(i)]
If $\frac{n^{1-\alpha+\tau}}{(M_1 \land M_2)^{2\tau}}\to0$
for some $0 < \tau \le 1$ such that $\tau <\alpha$, then
$\breve \mu_0 \pto \mu_0^*$ as $M_1 \to \infty$, $M_2 \to \infty$ and $n \to \infty$.

\item[(ii)]
If $\frac{n^{2-\alpha+\tau}}{(M_1 \land M_2)^{2\tau}}\to0$
for some $0 < \tau \le 1$ such that $\tau <\alpha$, then
as $M_1 \to \infty$, $M_2 \to \infty$ and $n \to \infty$,
\begin{equation*}
\sqrt{n}
\begin{pmatrix}
\breve \mu_0 - \mu_0^*
\\
\breve \theta_1 - \theta_1^*
\\
\breve \eta_1 - \eta_1^*
\\
\breve \theta_2 - \theta_2^*
\\
\breve \sigma^2 - (\sigma^*)^2
\end{pmatrix}
\dto N(0,\mathcal L).
\end{equation*}

\end{itemize}
\end{itemize}
\end{thm}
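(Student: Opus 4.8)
The plan is to follow the same architecture that underlies Theorems \ref{th2} and \ref{th3}, treating the $Q_2$ case as a perturbation of the $Q_1$ case in which $\lambda_{\ell_1,\ell_2}$ is replaced by $\mu_{\ell_1,\ell_2}$ in the noise coefficient only. First I would record the building blocks: by Theorem \ref{th3} we have $\tilde\nu-\nu^*=O_p(1/\sqrt{mN})$, hence $\tilde\kappa,\tilde\eta,\tilde\theta_2,\tilde\sigma^2$ are consistent at rate $\sqrt{mN}=O(N)$, which dominates $\sqrt n$ for all admissible $n\le N$; and, exactly as in the proof of Theorem \ref{th2}, the estimator $\tilde\sigma_{\ell_1,\ell_2}^2$ of $\varsigma_{\ell_1,\ell_2}^2=\sigma^2\mu_{\ell_1,\ell_2}^{-\alpha}$ built from the approximate coordinate process satisfies a CLT of the form $\sqrt n(\tilde\sigma_{\ell_1,\ell_2}^2-\varsigma_{\ell_1,\ell_2}^2)\dto N(0,2\varsigma_{\ell_1,\ell_2}^4)$, jointly over finitely many indices $(\ell_1,\ell_2)$ with independent limits, provided the bias from replacing the true coordinate process by $\hat x_{\ell_1,\ell_2}$ (spatial discretization error, controlled by $M_1\wedge M_2$) and from plugging in $\tilde\kappa,\tilde\eta$ is $o_p(1/\sqrt n)$. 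The quantitative condition $n^{2-\alpha+\tau}/(M_1\wedge M_2)^{2\tau}\to0$ is precisely what makes this discretization bias negligible at rate $\sqrt n$, while $n^{1-\alpha+\tau}/(M_1\wedge M_2)^{2\tau}\to0$ only gives $o_p(1)$, which is why part (b)(i) needs only the weaker hypothesis.

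For part (b)(i), I would simply note that $\breve\mu_0$ is a smooth (on a neighbourhood of the true parameter, where the relevant denominators are bounded away from $0$) function of $\tilde\sigma_{1,1}^2$ and $\tilde\sigma_{1,2}^2$, namely
\begin{equation*}
\breve\mu_0=\Biggl\{3\pi^2\Bigl((\tilde\sigma_{1,2}^2)^{-1/\alpha}-(\tilde\sigma_{1,1}^2)^{-1/\alpha}\Bigr)^{-1}\Biggr\}^{1/\alpha}(\tilde\sigma_{1,1}^2)^{-1/\alpha}-2\pi^2,
\end{equation*}
and that this expression returns $\mu_0^*$ when evaluated at $\varsigma_{1,1}^2,\varsigma_{1,2}^2$, because $(\varsigma_{1,1}^2)^{-1/\alpha}-(\varsigma_{1,2}^2)^{-1/\alpha}=\sigma^{-2/\alpha}(\mu_{1,1}^*-\mu_{1,2}^*)=-3\pi^2\sigma^{-2/\alpha}$ and $(\varsigma_{1,1}^2)^{-1/\alpha}=\sigma^{-2/\alpha}\mu_{1,1}^*=\sigma^{-2/\alpha}(2\pi^2+\mu_0^*)$. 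Consistency of $\tilde\sigma_{1,1}^2,\tilde\sigma_{1,2}^2$ (which follows from the weaker hypothesis) plus the continuous mapping theorem then gives $\breve\mu_0\pto\mu_0^*$; the same argument gives consistency of $\breve\sigma^2,\breve\theta_2,\breve\theta_1,\breve\eta_1$.

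For parts (a) and (b)(ii) the strategy is the delta method. I would assemble the joint CLT for the vector $(\tilde\sigma_{1,1}^2-\varsigma_{1,1}^2,\ \tilde\sigma_{1,2}^2-\varsigma_{1,2}^2,\ \tilde\kappa-\kappa^*,\ \tilde\eta-\eta^*,\ \tilde\theta_2-\theta_2^*,\ \tilde\sigma^2-(\sigma^*)^2)$ at rate $\sqrt n$: the first two coordinates converge to independent centered Gaussians with variances $2(\varsigma_{1,1}^*)^4$ and $2(\varsigma_{1,2}^*)^4$ by the coordinate-process CLT, and the last four coordinates are $o_p(1/\sqrt n)$ since $\sqrt n/\sqrt{mN}\to0$, so effectively only the $\tilde\sigma_{1,\cdot}^2$ fluctuations contribute to the limit. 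In case (a) the map is $(\tilde\sigma_{1,1}^2,\tilde\theta_2,\tilde\sigma^2,\tilde\kappa,\tilde\eta)\mapsto(\tilde\kappa\bar\theta_2,\tilde\eta\bar\theta_2,\bar\theta_2,\bar\sigma^2)$ with $\bar\sigma^2=\mu_{1,1}^\alpha\tilde\sigma_{1,1}^2$, $\bar\theta_2=(\tilde\theta_2/\tilde\sigma^2)\bar\sigma^2$; since the fluctuation enters only through $\bar\sigma^2$, every output coordinate is a scalar multiple of the single Gaussian $\mu_{1,1}^\alpha\cdot N(0,2(\varsigma_{1,1}^*)^4)=N(0,2(\sigma^*)^4)$, and computing the Jacobian row vector $(\theta_1^*,\eta_1^*,\theta_2^*,(\sigma^*)^2)/(\sigma^*)^2$ reproduces the rank-one matrix $\mathcal K$. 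In case (b) the map sends $(\tilde\sigma_{1,1}^2,\tilde\sigma_{1,2}^2,\tilde\theta_2,\tilde\sigma^2,\tilde\kappa,\tilde\eta)$ to $(\breve\mu_0,\breve\theta_1,\breve\eta_1,\breve\theta_2,\breve\sigma^2)$; I would compute the $5\times2$ Jacobian with respect to $(\tilde\sigma_{1,1}^2,\tilde\sigma_{1,2}^2)$ at the true values, multiply by $\mathrm{diag}(2(\varsigma_{1,1}^*)^4,2(\varsigma_{1,2}^*)^4)$, and check that the result equals $\mathcal L$; the block structure of $\mathcal L$ (with $\breve\theta_1,\breve\eta_1,\breve\theta_2,\breve\sigma^2$ all proportional to $\breve\sigma^2$, whose derivatives involve the symmetric combination $(\tilde\sigma_{1,2}^2)^{-1/\alpha}-(\tilde\sigma_{1,1}^2)^{-1/\alpha}$) makes this a mechanical but lengthy check. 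The main obstacle is the first ingredient, not the delta method: establishing the $\sqrt n$-CLT for $\tilde\sigma_{\ell_1,\ell_2}^2$ with a rigorous bound on the two bias sources — spatial discretization of the integral defining $x_{\ell_1,\ell_2}$, and substitution of $\tilde\kappa,\tilde\eta$ for $\kappa^*,\eta^*$ in the weights $\ee^{\kappa y/2}\ee^{\eta z/2}$ — showing each is $o_p(n^{-1/2})$ under the stated rate conditions, and identifying $2\varsigma_{\ell_1,\ell_2}^4$ as the variance via the quadratic-variation asymptotics for the Ornstein--Uhlenbeck coordinate process; this is where the hypotheses $n^{2-\alpha+\tau}/(M_1\wedge M_2)^{2\tau}\to0$ and $\tau<\alpha$ are genuinely used, and it is essentially the same estimate already carried out for Theorem \ref{th2}, so I would adapt that argument verbatim with $\lambda_{\ell_1,\ell_2}$ replaced by $\mu_{\ell_1,\ell_2}$ in the diffusion coefficient.
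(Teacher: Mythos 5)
Your proposal is correct and follows essentially the same route as the paper's proof: the joint $\sqrt n$-CLT for $(\tilde\sigma_{1,1}^2,\tilde\sigma_{1,2}^2)$ is obtained exactly as \eqref{eq-3-1} in the proof of Theorem \ref{th2} with $\lambda_{\ell_1,\ell_2}$ replaced by $\mu_{\ell_1,\ell_2}$ in the diffusion coefficient, Theorem \ref{th3} makes the $\sqrt{mN}$-consistent plug-ins negligible at rate $\sqrt n$, and the delta method (the paper's maps $\Phi,\Psi$ with gradients $\boldsymbol p_1,\boldsymbol p_2$) identifies $\mathcal K$ and $\mathcal L$, with continuity giving (b)(i) under the weaker rate condition. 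One cosmetic slip: in your displayed formula for $\breve\mu_0$ the outer exponent $1/\alpha$ should be absent, since $(\breve\sigma^2)^{1/\alpha}=3\pi^2\bigl((\tilde\sigma_{1,2}^2)^{-1/\alpha}-(\tilde\sigma_{1,1}^2)^{-1/\alpha}\bigr)^{-1}$, but this does not affect the argument.
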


\begin{rmk}
\ 
\begin{enumerate}
\item[(i)]
Comparing the asymptotic variances $\mathcal K$ and $\mathcal L$ 
with those in Theorem 3.6 of Tonaki et al.\,\cite{TKU2022arXiv1}, 
we can see that the estimators of the coefficient parameters 
$(\theta_1, \eta_1, \theta_2, \sigma^2)$ 
are improved for both cases where $\mu_0$ is known and unknown.

\item[(ii)]
Theorem \ref{th4} shows that 
the estimators of the coefficient parameters when $\mu_0$ is unknown 
cost $\frac{2}{9}(\frac{\mu_0}{\pi^2} +2)(\frac{\mu_0}{\pi^2} +5)$
times as much variance as those when $\mu_0$ is known.

\end{enumerate}

\end{rmk}

\section{Simulations}\label{sec3}
The numerical solution of SPDE \eqref{2d_spde} 
is generated by
\begin{eqnarray}
\tilde{X}_{t_{i}}(y_j, z_k)
= \sum^{K}_{\ell_1 = 1}\sum^{L}_{\ell_2 = 1}x_{\ell_1,\ell_2}(t_{i})
e_{\ell_1}^{(1)}(y_j) e_{\ell_2}^{(2)}(z_k), 
\quad i = 1, ..., N, j = 1, ..., M_1, k = 1, ..., M_2.
\label{appro-SPDE}
\end{eqnarray}
In this simulation, the true values of parameters
$(\theta_0^*, \theta_1^*,\eta_1^*, \theta_2^*, \sigma^*) = (0,0.2,0.2,0.2,1)$.
We set that $N = 10^3$, $M_1 = M_2 = 200$, $K = L = 10^4$,  
$\xi = 0$,
$\alpha = 0.5$,
$\lambda_{1,1}^* \approx 4.05$.
When $N=10^3, M_1=M_2=200$, the size of data is about 10 GB. 
We used R language to compute the estimators of Theorems \ref{th1} and \ref{th2}.
The computation time of \eqref{appro-SPDE} is directly proportional to $K \times L$.
Therefore, the computation time for the numerical solution of 
SPDE \eqref{2d_spde} 
is directly proportional to $N \times M_1 \times  M_2 \times K \times L$.
In the setting of this simulation, 
$N \times M_1 \times  M_2 \times K \times L = 4 \times 10^{15}$.
Three personal computers with Intel Gold 6128 (3.4 GHz) 
were used for this simulation,
and it takes about 100h to generate one sample path of SPDE \eqref{2d_spde}.
The number of iterations is $250$.

First, we estimated $\theta_1$, $\eta_1$, $\theta_2$ and 
$\sigma^2$.
Table \ref{tab1} is the simulation results of 
the means and the standard deviations (s.d.s) of $\hat{\theta}_1$, 
$\hat{\eta}_1$, $\hat{\theta}_2$  and $\hat{\sigma}^2$, where $\hat{\theta}_1 = \hat{\kappa}\hat{\theta_2}$ and $\hat{\eta}_1 = \hat{\eta}\hat{\theta}_2$.
We set that  $(N, m_1,m_2,\alpha) = (10^3, 30,30,0.5)$, $r \approx 0.98382$ and $\underline{\theta_2} \approx 0.07267 $.
\begin{table}[h]
\caption{Simulation results of $\hat{\theta}_1$, 
$\hat{\eta}_1$, $\hat{\theta}_2$  and $\hat{\sigma}^2$ \label{tab1}}
\begin{center}
\begin{tabular}{c|cccc} \hline
		&$\hat{\theta}_1$&$\hat{\eta}_1$&$\hat{\theta}_2$&$\hat{\sigma}^2$
\\ \hline
true value & 0.2& 0.2 & 0.2 & 1
\\ \hline
mean & 0.197 & 0.197 & 0.197 & 1.000
 \\
s.d. & (0.0027) & (0.0029) & (0.0023) & (0.0161)
 \\   \hline
\end{tabular}
\end{center}
\end{table}
It seems from Table \ref{tab1} that the biases of $\hat{\theta}_1$, 
$\hat{\eta}_1$, $\hat{\theta}_2$  and $\hat{\sigma}^2$ are very small.
The estimators of $\theta_1$, $\eta_1$, $\theta_2$ and $\sigma^2$ have good performances.

Next, we estimated $\theta_0$.
Table \ref{tab2} is the simulation results of 
the mean and the standard deviation (s.d.) of $\check{\theta}_0$
with $(N, m_1,m_2,n,\alpha,\tau)$ 
$ = (10^3, 30,30, 50, 0.5. 0.49)$.
In this case, $\frac{n^{1-\alpha+\tau}}{(M_1 \land M_2)^{2\tau}} \approx 0.267$.
\begin{table}[h]
\caption{Simulation results of $\check{\theta}_0$ \label{tab2}}
\begin{center}
\begin{tabular}{c|c} \hline
		&$\check{\theta}_{0}$
\\ \hline
true value &0  
\\ \hline
mean & -0.359
 \\
s.d. & (1.371)
 \\   \hline
\end{tabular}
\end{center}
\end{table}
We see from Table \ref{tab2} that 
the biases of $\check{\theta}_0$ is a little bit small.

Table \ref{tab3} is the simulation results of 
the means and the standard deviations (s.d.s) of $\check{\theta}_0$
with $(N, m_1,m_2,n,\alpha,\tau)$ 
$ = (10^3, 30,30, 100, 0.5. 0.49)$.
In this case, $\frac{n^{1-\alpha+\tau}}{(M_1 \land M_2)^{2\tau}} \approx 0.531$.
\begin{table}[h]
\caption{Simulation results of $\check{\theta}_0$ \label{tab3}}
\begin{center}
\begin{tabular}{c|c} \hline
		&$\check{\theta}_{0}$
\\ \hline
true value &0
\\ \hline
mean & -0.377
 \\
s.d. & (0.946)
 \\   \hline
\end{tabular}
\end{center}
\end{table}
Table \ref{tab3} shows that $\hat{\theta}_0$ has a small bias.  

Table \ref{tab4} is the simulation results of 
the means and the standard deviations (s.d.s) of $\check{\theta}_0$
with $(N, m_1,m_2,n,\alpha,\tau)$ 
$ = (10^3, 30,30, 150, 0.5.0. 49)$.
In this case, $\frac{n^{1-\alpha+\tau}}{(M_1 \land M_2)^{2\tau}} \approx 0.793$.
\begin{table}[h]
\caption{Simulation results of $\check{\theta}_0$ \label{tab4}}
\begin{center}
\begin{tabular}{c|c} \hline
		&$\check{\theta}_{0}$
\\ \hline
true value &0  
\\ \hline
mean & -0.918
 \\
s.d. & (0.828)
 \\   \hline
\end{tabular}
\end{center}
\end{table}
It seems from Table \ref{tab4} that the bias of $\check{\theta}_0$ is small. 

We calculated  $\hat{\theta}^{S} = (\hat{\theta}^{S}_0,\hat{\theta}^{S}_1,\hat{\eta}^{S}_1,\hat{\theta}^{S}_2,(\hat{\sigma}^{S})^2)$ using Theorem 3.3 
in Tonaki et al.\,\cite{TKU2022arXiv1}.
Table \ref{tab6} is the simulation results of 
the means and the standard deviations (s.d.s) of $\hat{\theta}^{S}_0$, 
$\hat{\theta}^{S}_1$, $\hat{\eta}^{S}_1$, $\hat{\theta}^{S}_2$ and $(\hat{\sigma}^{S})^2$
with $(N, m_1,m_2,n,\alpha, 
\epsilon)$ 
$ = (10^3, 30,30,50,0.5,$  $ 
0.499)$.
In this case, 
$\frac{n^{1-\alpha+\epsilon}}{M_1^{2\epsilon} \land M_2^{2\epsilon}} \approx 0.25$.
\begin{table}[h]
\caption{Simulation results of $\hat{\theta}^{S}_0$, $\hat{\theta}^{S}_1$, $\hat{\eta}^{S}_1$, 
$\hat{\theta}^{S}_2$ and $(\hat{\sigma}^{S})^2$ \label{tab6}}
\begin{center}
\begin{tabular}{c|ccccc} \hline
		&$\hat{\theta}^{S}_{0}$&$\hat{\theta}^{S}_{1}$&$\hat{\eta}^{S}_{1}$&$\hat{\theta}^{S}_{2}$&$(\hat{\sigma}^{S})^2$
\\ \hline
true value &0 & 0.2 & 0.2 &0.2 &1 
\\ \hline
mean &-1.382&  0.161& 0.161 & 0.175& 0.758
 \\
s.d. & (5.576)& (0.111)& (0.111) &(0.121) & (0.524)
 \\   \hline
\end{tabular}
\end{center}
\end{table}
It seems from Table \ref{tab6} that 
the biases of $\hat{\theta}^{S}_1$, $\hat{\eta}^{S}_1$, 
$\hat{\theta}^{S}_2$ and $(\hat{\sigma}^{S})^2$ 
are not too large, 
but
$\hat{\theta}^{S}_0$ has a bias.

Table \ref{tab7} is the simulation results of 
the means and the standard deviations (s.d.s) of $\hat{\theta}^{S}_0$, 
$\hat{\theta}^{S}_1$, $\hat{\eta}^{S}_1$, $\hat{\theta}^{S}_2$ and $(\hat{\sigma}^{S})^2$
with $(N, m_1,m_2,n,\alpha, 
\epsilon) 
= (10^3, 30,30,100,0.5, 
0.499)$.
In this case, 
$\frac{n^{1-\alpha+\epsilon}}{M_1^{2\epsilon} \land M_2^{2\epsilon}} \approx 0.50$.
\begin{table}[h]
\caption{Simulation results of $\hat{\theta}^{S}_0$, $\hat{\theta}^{S}_1$, $\hat{\eta}^{S}_1$, 
$\hat{\theta}^{S}_2$ and $(\hat{\sigma}^{S})^2$ \label{tab7}}
\begin{center}
\begin{tabular}{c|ccccc} \hline
		&$\hat{\theta}^{S}_{0}$&$\hat{\theta}^{S}_{1}$&$\hat{\eta}^{S}_{1}$&$\hat{\theta}^{S}_{2}$&$(\hat{\sigma}^{S})^2$
\\ \hline
true value &0 & 0.2 & 0.2 &0.2 &1 
\\ \hline
mean &-1.245& 0.187& 0.187 & 0.204& 0.882
 \\
s.d. & (4.594)& (0.092)& (0.091) &(0.100) & (0.432)
 \\   \hline
\end{tabular}
\end{center}
\end{table}
Table \ref{tab7} shows that 
 $\hat{\theta}^{S}_0$  has a bias
 and $\hat{\theta}^{S}_1$, $\hat{\eta}^{S}_1$ and $(\hat{\sigma}^{S})^2$
have small biases
 , but
$\hat{\theta}^{S}_2$ has  a good performance.

Table \ref{tab8} is the simulation results of 
the means and the standard deviations (s.d.s) of $\hat{\theta}^{S}_0$, 
$\hat{\theta}^{S}_1$, $\hat{\eta}^{S}_1$, $\hat{\theta}^{S}_2$ and $(\hat{\sigma}^{S})^2$
with $(N, m_1,m_2,n,\alpha, 
\epsilon) 
= (10^3, 30,30,150,0.5, 
0.499)$.
In this case, 
$\frac{n^{1-\alpha+\epsilon}}{M_1^{2\epsilon} \land M_2^{2\epsilon}} \approx 0.75$.
\begin{table}[h]
\caption{Simulation results of $\hat{\theta}^{S}_0$, $\hat{\theta}^{S}_1$, $\hat{\eta}^{S}_1$, 
$\hat{\theta}^{S}_2$ and $(\hat{\sigma}^{S})^2$ \label{tab8}}
\begin{center}
\begin{tabular}{c|ccccc} \hline
		&$\hat{\theta}^{S}_{0}$&$\hat{\theta}^{S}_{1}$&$\hat{\eta}^{S}_{1}$&$\hat{\theta}^{S}_{2}$&$(\hat{\sigma}^{S})^2$
\\ \hline
true value &0 & 0.2 & 0.2 &0.2 &1 
\\ \hline
mean &-0.967& 0.163& 0.163 & 0.177& 0.767
 \\
s.d. & (3.333)& (0.070)& (0.069) &(0.076) & (0.329)
 \\   \hline
\end{tabular}
\end{center}
\end{table}
By Table \ref{tab8}, it seems that
the biases of $\hat{\theta}^{S}_1$, $\hat{\eta}^{S}_1$ 
and $\hat{\theta}^{S}_2$ 
are not too large
and that $\hat{\theta}^{S}_0$ and $(\hat{\sigma}^{S})^2$ have biases.

From Tables \ref{tab6}-\ref{tab8}, we can see that 
$\hat{\theta}^{S}_1$, $\hat{\eta}^{S}_1$, $\hat{\theta}^{S}_2$ and $(\hat{\sigma}^{S})^2$ 
have good performances when $n=100$
and $\hat{\theta}^{S}_0$ has a smallest bias when $n=150$.

Table \ref{tab9} is summary of Tables \ref{tab1}-\ref{tab4} and \ref{tab6}-\ref{tab8}.
Rows 2, 3, and 4 of Table \ref{tab9} show 
the results of $\hat{\theta} = (\check{\theta}_0, \hat{\theta}_1, \hat{\eta}_1, \hat{\theta}_2 , \hat{\sigma}^2)$ with 
$(N, m_1,m_2,n,\alpha,\tau) =$ 
$(10^3, 30,30, 50, 0.5. 0.49)$, $(10^3, 30,30, 100, 0.5. 0.49)$ and $(10^3, 30,30, 150, 0.5. 0.49)$, respectively. 
Rows 6, 7, and 8 of Table \ref{tab9} show 
the results of $\hat{\theta}^{S} = (\hat{\theta}^{S}_0, \hat{\theta}^{S}_1, \hat{\eta}^{S}_1, \hat{\theta}^{S}_2 , (\hat{\sigma}^{S})^2)$ with 
$(N, m_1,m_2,n,\alpha, \epsilon) = $
$(10^3, 30,30, 50, 0.5. 0.49)$, $(10^3, 30,30, 100, 0.5. 0.49)$ and $(10^3, 30,30, 150, 0.5. 0.49)$, respectively. 
Table \ref{tab9} shows that 
for all cases n=50, 100, and 150, the biases of $\check{\theta}_0$, $\hat{\theta}_1$, $\hat{\eta}_1$, $\hat{\theta}_2$ and $\hat{\sigma}^2$ are smaller than those of $\hat{\theta}^{S}_0$, $\hat{\theta}^{S}_1$, $\hat{\eta}^{S}_1$, $\hat{\theta}^{S}_2$ and $(\hat{\sigma}^{S})^2$, respectively.
This indicates that $\hat{\theta} = (\check{\theta}_0, \hat{\theta}_1, \hat{\eta}_1, \hat{\theta}_2 , \hat{\sigma}^2)$ is superior to $\hat{\theta}^{S} = (\hat{\theta}^{S}_0, \hat{\theta}^{S}_1, \hat{\eta}^{S}_1, \hat{\theta}^{S}_2 , (\hat{\sigma}^{S})^2)$.
\begin{table}[h]
\caption{Summary of Tables \ref{tab1}-\ref{tab4} and \ref{tab6}-\ref{tab8} \label{tab9}}
\begin{center}
\begin{tabular}{l|c|c|c|cccc} \hline
& & 		&$\check{\theta}_{0}$&$\hat{\theta}_{1}$&$\hat{\eta}_{1}$&$\hat{\theta}_{2}$&$\hat{\sigma}^2$
\\ 
& & true value &0 & 0.2 & 0.2 &0.2 &1 
\\ \cline{2-8}
Simulation result of & $n=50$ & mean & -0.359 &  &  &  & 
 \\ 
$\hat{\theta} = (\check{\theta}_0, \hat{\theta}_1, \hat{\eta}_1, \hat{\theta}_2 , \hat{\sigma}^2)$& & s.d. & (1.371)  &  &  &  & 
 \\   \cline{2-4}
& $n=100$ & mean & -0.377  & 0.197 & 0.197 & 0.197 & 1.000
 \\
& & s.d. & (0.946)  & (0.0027) & (0.0029) & (0.0023) & (0.0161)
 \\   \cline{2-4}
& $n=150$ & mean & -0.918 &  &  &  & 
 \\
& & s.d. & (0.828) &  &  &  & 
 \\   \hline
& & 		&$\hat{\theta}^{S}_{0}$&$\hat{\theta}^{S}_{1}$&$\hat{\eta}^{S}_{1}$&$\hat{\theta}^{S}_{2}$&$(\hat{\sigma}^{S})^2$
\\ 
& & true value &0 & 0.2 & 0.2 &0.2 &1 
\\ \cline{2-8}
Simulation result of & $n=50$ & mean &-1.382&  0.161& 0.161 & 0.175& 0.758
 \\
$\hat{\theta}^{S} = (\hat{\theta}^{S}_0, \hat{\theta}^{S}_1, \hat{\eta}^{S}_1, \hat{\theta}^{S}_2 , (\hat{\sigma}^{S})^2)$& & s.d. & (5.576)& (0.111)& (0.111) &(0.121) & (0.524)
 \\   \cline{2-8}
& $n=100$ & mean &-1.245&  0.187& 0.187 & 0.204& 0.882
 \\
& & s.d. & (4.594)& (0.092)& (0.091) &(0.100) & (0.432)
 \\   \cline{2-8}
& $n=150$ & mean &-0.967& 0.163& 0.163 & 0.177& 0.767
 \\
& & s.d. & (3.333)& (0.070)& (0.069) &(0.076) & (0.329)
 \\   \hline
\end{tabular}
\end{center}
\end{table}

\section{Proofs}\label{sec4}
We set the following notation.
\begin{enumerate}
\item[1.]
Let $\mathbb R_{+}=(0,\infty)$.

\item[2.]
For $a,b \in \mathbb R$, 
we write $a \lesssim b$ if $|a| \le C|b|$ for some constant $C>0$.

\item[3.]
For two functions $f, g : \mathbb R^d \to \mathbb R$,
we write $f(x) \lesssim g(x)$ $(x \to a)$ 
if $f(x) \lesssim g(x)$ in a neighborhood of $x = a$. 

\item[4.]
For $x=(x_1,\ldots, x_d) \in \mathbb R^d$ and $f:\mathbb R^d \to \mathbb R$,
we write $\pd_{x_i} f(x) = \frac{\pd}{\pd x_i}f(x)$,
$\pd_x f(x) = (\pd_{x_1}f(x), \ldots, \pd_{x_d}f(x))$ and 
$\pd_x^2 f(x) = (\pd_{x_j}\pd_{x_i}f(x))_{i,j=1}^d$.

\item[5.]
Let $\ind_A$ be the indicator function of $A$.

\item[6.]
For a function $f:\mathbb R \to \mathbb R$ and a positive number $\epsilon$,
we set
\begin{equation*}
D_\epsilon f(x) = f(x+\epsilon) - f(x).
\end{equation*}
Note that
$D_\epsilon^2 f(x) =D_\epsilon[D_\epsilon f](x) 
=  f(x+2\epsilon) - 2f(x+\epsilon) +f(x)$.

\item[7.]
For a function $f:\mathbb R^2 \to \mathbb R$ and a positive number $\epsilon$,
we set 
\begin{equation*}
D_{1,\epsilon} f(x,y) = f(x+\epsilon,y) - f(x,y),
\quad 
D_{2,\epsilon} f(x) = f(x,y+\epsilon) - f(x,y).
\end{equation*}

\end{enumerate}

\subsection{Proofs of the results in Subsections 
\ref{sec2-1} and \ref{sec2-2}}
Let 
\begin{align*}
A_{i,\ell_1,\ell_2} &= 
-\langle \xi, e_{\ell_1,\ell_2} \rangle (1-\ee^{-\lambda_{\ell_1,\ell_2}\Delta})
\ee^{-(i-1)\lambda_{\ell_1,\ell_2}\Delta},
\\
B_{i,\ell_1,\ell_2}^{(1)} &= 
-\frac{\sigma(1-\ee^{-\lambda_{\ell_1,\ell_2}\Delta})}
{\lambda_{\ell_1,\ell_2}^{\alpha/2}}
\int_0^{(i-1)\Delta} \ee^{-\lambda_{\ell_1,\ell_2}((i-1)\Delta-s)}
\dd w_{\ell_1,\ell_2}(s),
\\
B_{i,\ell_1,\ell_2}^{(2)} &= 
\frac{\sigma}{\lambda_{\ell_1,\ell_2}^{\alpha/2}}
\int_{(i-1)\Delta}^{i\Delta} \ee^{-\lambda_{\ell_1,\ell_2}(i\Delta-s)}
\dd w_{\ell_1,\ell_2}(s),
\end{align*}
and $B_{i,\ell_1,\ell_2}=B_{i,\ell_1,\ell_2}^{(1)}+B_{i,\ell_1,\ell_2}^{(2)}$.
The increment of the coordinate process of SPDE \eqref{2d_spde} driven by 
the $Q_1$-Wiener process is decomposed as
\begin{equation*}
x_{\ell_1,\ell_2}(t_{i})-x_{\ell_1,\ell_2}(t_{i-1})=
A_{i,\ell_1,\ell_2}+B_{i,\ell_1,\ell_2}.
\end{equation*}
\begin{proof}[\bf Proof of Proposition \ref{prop1}]
For $y,z \in [0,2)$, define
\begin{equation}\label{eq-1-1}
F_\alpha(y,z)=
\sum_{\ell_1,\ell_2\ge1}
\frac{1-\ee^{-\lambda_{\ell_1,\ell_2}\Delta}}{\lambda_{\ell_1,\ell_2}^{1+\alpha}}
\cos(\pi \ell_1 y)\cos(\pi \ell_2 z).
\end{equation}
Let
\begin{equation}\label{eq-1-2}
F_{j,k}=
\sum_{\ell_{1},\ell_{2}\ge1}
\frac{1-\ee^{-\lambda_{\ell_1,\ell_2}\Delta}}{\lambda_{\ell_1,\ell_2}^{1+\alpha}}
(e_{\ell_{1}}^{(1)}(\widetilde y_{j})-e_{\ell_{1}}^{(1)}(\widetilde y_{j-1}))^2
(e_{\ell_{2}}^{(2)}(\widetilde z_{k})-e_{\ell_{2}}^{(2)}(\widetilde z_{k-1}))^2.
\end{equation}

We first show that
\begin{equation}\label{eq-1-3}
\EE[(T_{i,j,k}X)^2] = \sigma^2 F_{j,k} + R_{i,j,k},
\end{equation}
where $R_{i,j,k}$ satisfies $\sum_{i=1}^N R_{i,j,k} \lesssim F_{j,k}$.

Notice that
\begin{equation*}
T_{i,j,k}X
= \sum_{\ell_1,\ell_2\ge1}
(x_{\ell_1,\ell_2}(t_{i})-x_{\ell_1,\ell_2}(t_{i-1}))
(e_{\ell_1}^{(1)}(\widetilde y_{j})-e_{\ell_1}^{(1)}(\widetilde y_{j-1}))
(e_{\ell_2}^{(2)}(\widetilde z_{k})-e_{\ell_2}^{(2)}(\widetilde z_{k-1})).
\end{equation*}
It holds from
the independence of $\{B_{i,\ell_1,\ell_2}\}_{\ell_1,\ell_2 \ge 1}$
and $\EE[B_{i,\ell_1,\ell_2}]=0$
that
\begin{align*}
\EE[(T_{i,j,k}X)^2]
&= \sum_{\ell_1,\ell_1',\ell_2,\ell_2'\ge1}
A_{i,\ell_1,\ell_2}A_{i,\ell_1',\ell_2'}
\\
&\qquad\qquad\times
(e_{\ell_1}^{(1)}(\widetilde y_{j})-e_{\ell_1}^{(1)}(\widetilde y_{j-1}))
(e_{\ell_2}^{(2)}(\widetilde z_{k})-e_{\ell_2}^{(2)}(\widetilde z_{k-1}))
\\
&\qquad\qquad\times
(e_{\ell_1'}^{(1)}(\widetilde y_{j})-e_{\ell_1'}^{(1)}(\widetilde y_{j-1}))
(e_{\ell_2'}^{(2)}(\widetilde z_{k})-e_{\ell_2'}^{(2)}(\widetilde z_{k-1}))
\\
&\qquad
+\sum_{\ell_1,\ell_2\ge1}
\EE[B_{i,\ell_1,\ell_2}^2]
(e_{\ell_1}^{(1)}(\widetilde y_{j})-e_{\ell_1}^{(1)}(\widetilde y_{j-1}))^2
(e_{\ell_2}^{(2)}(\widetilde z_{k})-e_{\ell_2}^{(2)}(\widetilde z_{k-1}))^2
\\
&= 
\Biggl(
\sum_{\ell_{1},\ell_{2}\ge1}
A_{i,\ell_{1},\ell_{2}}
(e_{\ell_{1}}^{(1)}(\widetilde y_{j})-e_{\ell_{1}}^{(1)}(\widetilde y_{j-1}))
(e_{\ell_{2}}^{(2)}(\widetilde z_{k})-e_{\ell_{2}}^{(2)}(\widetilde z_{k-1}))
\Biggr)^2
\\
&\qquad
+\sum_{\ell_1,\ell_2\ge1}
\EE[B_{i,\ell_1,\ell_2}^2]
(e_{\ell_1}^{(1)}(\widetilde y_{j})-e_{\ell_1}^{(1)}(\widetilde y_{j-1}))^2
(e_{\ell_2}^{(2)}(\widetilde z_{k})-e_{\ell_2}^{(2)}(\widetilde z_{k-1}))^2
\\
&=: S_1 + S_2.
\end{align*}
Since $\|A_{\theta}^{(1+\alpha)/2} \xi \|^2
=\sum_{\ell_1, \ell_2 \ge 1}\lambda_{\ell_1, \ell_2}^{1+\alpha}
\langle \xi, e_{\ell_1,\ell_2} \rangle^2$, 
it follows from the Schwarz inequality that
\begin{align*}
S_1
&=\Biggl(
\sum_{\ell_{1},\ell_{2}\ge1}
\lambda_{\ell_1,\ell_2}^{(1+\alpha)/2} 
\langle \xi, e_{\ell_1,\ell_2} \rangle
\\
&\qquad\qquad
\times
\frac{1-\ee^{-\lambda_{\ell_1,\ell_2}\Delta}}{\lambda_{\ell_1,\ell_2}^{(1+\alpha)/2}}
\ee^{-(i-1)\lambda_{\ell_1,\ell_2}\Delta}
(e_{\ell_{1}}^{(1)}(\widetilde y_{j})-e_{\ell_{1}}^{(1)}(\widetilde y_{j-1}))
(e_{\ell_{2}}^{(2)}(\widetilde z_{k})-e_{\ell_{2}}^{(2)}(\widetilde z_{k-1}))
\Biggr)^2
\\
&\le
\sum_{\ell_{1},\ell_{2}\ge1}
\lambda_{\ell_1,\ell_2}^{1+\alpha}
\langle \xi, e_{\ell_1,\ell_2} \rangle^2 
\\
&\qquad\times
\sum_{\ell_{1},\ell_{2}\ge1}
\frac{(1-\ee^{-\lambda_{\ell_1,\ell_2}\Delta})^2}{\lambda_{\ell_1,\ell_2}^{1+\alpha}}
\ee^{-2(i-1)\lambda_{\ell_1,\ell_2}\Delta}
(e_{\ell_{1}}^{(1)}(\widetilde y_{j})-e_{\ell_{1}}^{(1)}(\widetilde y_{j-1}))^2
(e_{\ell_{2}}^{(2)}(\widetilde z_{k})-e_{\ell_{2}}^{(2)}(\widetilde z_{k-1}))^2
\\
&=:
\|A_{\theta}^{(1+\alpha)/2}\xi\|^2 G_{i,j,k},
\end{align*}
where $G_{i,j,k}$ satisfies
\begin{equation}\label{eq-1-4}
\sum_{i=1}^N G_{i,j,k} 
\le
\sum_{\ell_{1},\ell_{2}\ge1}
\frac{1-\ee^{-\lambda_{\ell_1,\ell_2}\Delta}}{\lambda_{\ell_1,\ell_2}^{1+\alpha}}
(e_{\ell_{1}}^{(1)}(\widetilde y_{j})-e_{\ell_{1}}^{(1)}(\widetilde y_{j-1}))^2
(e_{\ell_{2}}^{(2)}(\widetilde z_{k})-e_{\ell_{2}}^{(2)}(\widetilde z_{k-1}))^2
=F_{j,k}.
\end{equation}
Meanwhile, 
one has
\begin{equation*}
\EE[B_{i,\ell_1,\ell_2}^2]
=\frac{\sigma^2(1-\ee^{-\lambda_{\ell_1,\ell_2}\Delta})}
{\lambda_{\ell_1,\ell_2}^{1+\alpha}}
\biggl(
1-\frac{1-\ee^{-\lambda_{\ell_1,\ell_2}\Delta}}{2}
\ee^{-2(i-1)\lambda_{\ell_1,\ell_2}\Delta}
\biggr),
\end{equation*}
and thus it follows that
\begin{equation*}
S_2 = \sigma^2 \biggl(F_{j,k}-\frac{G_{i,j,k}}{2}\biggr).
\end{equation*}
Consequently, from [A1] and \eqref{eq-1-4}, we obtain \eqref{eq-1-3}.

We next show that 
\begin{equation}\label{eq-1-5}
F_{j,k} = \Delta^\alpha
\ee^{-\kappa(\widetilde y_{j-1}+\widetilde y_j)/2}
\ee^{-\eta(\widetilde z_{k-1}+\widetilde z_k)/2} \psi_{r,\alpha}(\theta_2)
+O(\Delta^{1+\alpha}),
\end{equation}
where $\psi_{r,\alpha}(\theta_2)$ is given in \eqref{psi}.

Since it holds from Lemma \ref{lem9} that
\begin{equation*}
F_{j,k} = 
4\ee^{-\kappa(\widetilde y_{j-1}+\widetilde y_j)/2}
\ee^{-\eta(\widetilde z_{k-1}+\widetilde z_k)/2}
D_{1,\delta}D_{2,\delta}F_\alpha(0,0)
+O(\Delta^{1+\alpha}),
\end{equation*}
the calculation of $D_{1,\delta}D_{2,\delta}F_\alpha(0,0)$ remains.
Let 
\begin{equation*}
f_\alpha(s) = \frac{1-\ee^{-s}}{s^{1+\alpha}},
\quad
G_{r,\alpha}(x,y:\theta_2)=
f_\alpha(\theta_2\pi^2(x^2+y^2))(\cos(\pi r x)-1)(\cos(\pi r y)-1).
\end{equation*}
It follows from Lemma \ref{lem1} that for $\alpha \in (0,2)$ 
and $\delta = r \sqrt{\Delta}$,
\begin{align}
D_{1,\delta}D_{2,\delta}F_\alpha(0,0)
&=
\Delta^{1+\alpha}
\sum_{\ell_1,\ell_2\ge1}
f_\alpha(\lambda_{\ell_1,\ell_2}\Delta)
(\cos(\pi r\ell_1 \sqrt{\Delta})-1)(\cos(\pi r\ell_2 \sqrt{\Delta})-1)
\nonumber
\\
&=
\Delta^{\alpha}
\iint_{\mathbb R_{+}^2} G_{r,\alpha}(x,y:\theta_2) \dd x \dd y
+O(\Delta^{1+\alpha}).
\label{eq-1-6}
\end{align}
Since
\begin{equation*}
\int_0^{\pi/2}\cos(x\cos (t))\dd t = 
\int_0^{\pi/2}\cos(x\sin (t))\dd t = 
\frac{\pi}{2}J_0(x),
\end{equation*}
\begin{equation*}
\int_0^{\pi/2} \cos(x\cos (t))\cos(x\sin (t)) \dd t 
= \frac{\pi}{2}J_0(\sqrt{2} x),
\end{equation*}
the integral in \eqref{eq-1-6} can be computed as
\begin{align*}
&\iint_{\mathbb R_{+}^2} G_{r,\alpha}(x,y:\theta_2) \dd x \dd y
\\
&=\frac{1}{\theta_2\pi^2}
\int_0^\infty z f_\alpha (z^2)
\int_0^{\pi/2} 
\Bigl(\cos\Bigl(\frac{r z}{\sqrt{\theta_2}}\cos (t)\Bigr)-1\Bigr)
\Bigl(\cos\Bigl(\frac{r z}{\sqrt{\theta_2}}\sin (t)\Bigr)-1\Bigr) \dd t \dd z
\\
&=\frac{1}{2\theta_2\pi}
\int_0^\infty 
\frac{1-\ee^{-z^2}}{z^{1+2\alpha}}
\biggl(
J_0\Bigl(\frac{\sqrt{2}r z}{\sqrt{\theta_2}}\Bigr)
-2J_0\Bigl(\frac{r z}{\sqrt{\theta_2}}\Bigr)+1
\biggr) \dd z
\\
&=\frac{\psi_{r,\alpha}(\theta_2)}{4},
\end{align*}
and thus we get \eqref{eq-1-5}. This concludes the proof.
\end{proof}

For $b \in (0,1/2)$ and $u,v \in L^2((b,1-b)^2)$, we write
\begin{equation*}
\langle u,v \rangle_b = 
\frac{1}{(1-2b)^2} \int_b^{1-b}\int_b^{1-b} u(x,y) v(x,y)\dd x \dd y,
\quad
\| u \|_b = \sqrt{\langle u,u \rangle_b}.
\end{equation*}
We define $V(\nu) = 2(V^{(1)}(\nu)+V^{(2)}(\nu))$, where
\begin{equation*}
V^{(j)}(\nu) = 
\Bigl(
\bigl\langle 
\pd_{\nu_p} f_{r/\sqrt{j},\alpha}(\cdot,\cdot:\nu), 
\pd_{\nu_q} f_{r/\sqrt{j},\alpha}(\cdot,\cdot:\nu)
\bigr\rangle_b
\Bigr)_{1\le p,q \le 4}.
\end{equation*}
Set $\bar y_j = (\widetilde y_{j-1}+\widetilde y_j)/2$, 
$\bar z_k = (\widetilde z_{k-1}+\widetilde z_k)/2$.
The contrast function $K_{m,N}$ can be expressed as 
$K_{m,N}(\nu) = K_{m,N}^{(1)}(\nu) + K_{m,N}^{(2)}(\nu)$, where
\begin{align*}
K_{m,N}^{(1)}(\nu)
&=
\frac{1}{m}\sum_{k=1}^{m_2} \sum_{j=1}^{m_1}
\Biggl(
\frac{1}{N \Delta^\alpha}\sum_{i=1}^N (T_{i,j,k}X)^2 
- f_{r,\alpha}(\bar y_j, \bar z_k:\nu)
\Biggr)^2,
\\
K_{m,N}^{(2)}(\nu)
&=
\frac{1}{m}\sum_{k=1}^{m_2} \sum_{j=1}^{m_1}
\Biggl(
\frac{1}{N (2\Delta)^\alpha}\sum_{i=1}^N (\widetilde T_{i,j,k}X)^2 
- f_{r/\sqrt{2},\alpha}(\bar y_j, \bar z_k:\nu)
\Biggr)^2.
\end{align*}
Let $K(\nu, \nu^*) = K^{(1)}(\nu,\nu^*)+K^{(2)}(\nu,\nu^*)$, where
\begin{equation*}
K^{(p)}(\nu,\nu^*)
=\|f_{r/\sqrt{p},\alpha}(\cdot,\cdot:\nu)-f_{r/\sqrt{p},\alpha}(\cdot,\cdot:\nu^*)
\|_b^2.
\end{equation*}

\begin{proof}[\bf Proof of Theorem \ref{th1}]
It holds from the mean value theorem that
\begin{equation*}
-\sqrt{m N}\pd_\nu K_{m,N}(\nu^*)^\TT
=\int_0^1 \pd_\nu^2 K_{m,N}(\nu^*+u(\hat\nu-\nu^*))\dd u 
\sqrt{m N}(\hat\nu-\nu^*).
\end{equation*}
To complete the proof, it suffices to prove that
\begin{equation}\label{eq-2-1}
\hat\nu \pto \nu^*, 
\end{equation}
\begin{equation}\label{eq-2-2}
-\sqrt{MN}\pd_\nu K_{m,N}(\nu^*)^\TT = O_p(1),
\end{equation}
\begin{equation}\label{eq-2-3}
\int_0^1 \pd_\nu^2 K_{m,N}(\nu^*+u(\hat\nu-\nu^*))\dd u 
\pto V(\nu^*) \in \mathrm{GL}_4(\mathbb R).
\end{equation}

We will prove \eqref{eq-2-1}-\eqref{eq-2-3} by the following five steps.
We verify that
$K(\nu,\nu^*)$ takes its unique minimum in $\nu = \nu^*$ and 
$K_{m,N}(\nu)$ converges to $K(\nu,\nu^*)$ 
in probability uniformly with respect to $\nu \in \Xi$
in order to show \eqref{eq-2-1} in Steps 1 and 2, 
Step 3 is devoted to the proof of \eqref{eq-2-2}.
We show the convergence in probability of \eqref{eq-2-3} in Step 4
and confirm that $V(\nu^*)$ is non-singular in Step 5.

\textbf{Step 1:}
We show that $K(\nu,\nu^*)=0$ if and only if $\nu=\nu^*$.
If $K(\nu,\nu^*)=0$, then
\begin{equation*}
\|f_{r,\alpha}(\cdot,\cdot:\nu)-f_{r,\alpha}(\cdot,\cdot:\nu^*)\|_b
= \| f_{r/\sqrt{2},\alpha}(\cdot,\cdot:\nu)
-f_{r/\sqrt{2},\alpha}(\cdot,\cdot:\nu^*) \|_b
= 0,
\end{equation*}
that is, 
$f_{r,\alpha}(y,z:\nu) = f_{r,\alpha}(y,z:\nu^*)$
and $f_{r/\sqrt{2},\alpha}(y,z:\nu) = f_{r/\sqrt{2},\alpha}(y,z:\nu^*)$
for any $y,z\in[b,1-b]$.
Therefore, it holds that for any $y,z\in[b,1-b]$,
\begin{align*}
\sigma^2\psi_{r,\alpha}(\theta_2)\ee^{-\kappa y}\ee^{-\eta z} &=
(\sigma^*)^2\psi_{r,\alpha}(\theta_2^*)\ee^{-\kappa^* y}\ee^{-\eta^* z},
\\
\sigma^2\psi_{r/\sqrt{2},\alpha}(\theta_2)\ee^{-\kappa y}\ee^{-\eta z} &=
(\sigma^*)^2\psi_{r/\sqrt{2},\alpha}(\theta_2^*)\ee^{-\kappa^* y}\ee^{-\eta^* z},
\end{align*}
which yield $\kappa = \kappa^*$, $\eta = \eta^*$,
$\sigma^2\psi_{r,\alpha}(\theta_2) 
= (\sigma^*)^2\psi_{r,\alpha}(\theta_2^*)$
and $\sigma^2\psi_{r/\sqrt{2},\alpha}(\theta_2) 
= (\sigma^*)^2\psi_{r/\sqrt{2},\alpha}(\theta_2^*)$. 
Noting that 
$\psi_{r,\alpha}(\theta_2)/\psi_{r/\sqrt{2},\alpha}(\theta_2)
=\psi_{r,\alpha}(\theta_2^*)/\psi_{r/\sqrt{2},\alpha}(\theta_2^*)$
and the function
\begin{equation*}
\theta_2\mapsto \psi_{r,\alpha}(\theta_2)/\psi_{r/\sqrt{2},\alpha}(\theta_2)
\end{equation*}
is injective for $\theta_2 > \underline{\theta_2}(r,\alpha)$, 
which follows from Lemmas \ref{lem11} and \ref{lem12},
we obtain from $\psi_{r,\alpha}(\theta_2)>0$ that 
$\theta_2=\theta_2^*$ and $\sigma^2=(\sigma^*)^2$.
Moreover, if $\nu=\nu^*$, then $K(\nu,\nu^*)=0$. Therefore, we have the desired result.

\textbf{Step 2:}
We show that
\begin{equation*}
\sup_{\nu \in \Xi}|K_{m,N}(\nu)-K(\nu,\nu^*)| \pto 0, 
\quad
m,N\to\infty.
\end{equation*}
Setting
\begin{equation*}
\zeta_{i,j,k} =
\frac{(T_{i,j,k} X)^2}{\Delta^\alpha}-f_{r,\alpha}(\bar y_j, \bar z_k:\nu^*),
\quad
Z_{j,k} = \frac{1}{N}\sum_{i=1}^N \zeta_{i,j,k},
\end{equation*}
and $\phi_{r,\alpha}(y,z:\nu) = f_{r,\alpha}(y,z:\nu^*) - f_{r,\alpha}(y,z:\nu)$,
we can write
\begin{equation*}
K_{m,N}^{(1)}(\nu)
=\frac{1}{m}\sum_{k=1}^{m_2} \sum_{j=1}^{m_1} Z_{j,k}^2
+\frac{2}{m}\sum_{k=1}^{m_2} \sum_{j=1}^{m_1} Z_{j,k} 
\phi_{r,\alpha}(\bar y_j, \bar z_k:\nu)
+\frac{1}{m}\sum_{k=1}^{m_2} \sum_{j=1}^{m_1}
\phi_{r,\alpha}^2(\bar y_j, \bar z_k:\nu),
\end{equation*}
and thus have
\begin{align}
\sup_{\nu \in \Xi}|K_{m,N}^{(1)}(\nu)-K^{(1)}(\nu,\nu^*)|
&\le
\frac{1}{m}\sum_{k=1}^{m_2} \sum_{j=1}^{m_1} Z_{j,k}^2
+2\sup_{\nu \in \Xi}
\Biggl|
\frac{1}{m}\sum_{k=1}^{m_2} \sum_{j=1}^{m_1} Z_{j,k} 
\phi_{r,\alpha}(\bar y_j, \bar z_k:\nu)
\Biggr|
\nonumber
\\
&\qquad
+\sup_{\nu \in \Xi}
\Biggl|
\frac{1}{m}\sum_{k=1}^{m_2} \sum_{j=1}^{m_1}
\phi_{r,\alpha}^2(\bar y_j, \bar z_k:\nu)
-K^{(1)}(\nu,\nu^*)
\Biggr|.
\label{eq-2-4}
\end{align}
Since the function $(y,z,\nu) \mapsto \phi_{r,\alpha}(y,z:\nu)$ is
continuous on the compact set $[b,1-b]^2 \times \Xi$, 
the last term of \eqref{eq-2-4} converges to 0.
Furthermore, we obtain from Proposition \ref{prop1} that 
\begin{equation*}
\zeta_{i,j,k} =
\Delta^{-\alpha}
\bigl((T_{i,j,k} X)^2-\EE[(T_{i,j,k} X)^2] \bigr) + \mathcal R_{i,j,k},
\end{equation*}
where $\sum_{i=1}^N \mathcal R_{i,j,k}=O(1)$ uniformly in $j,k$,
and it follows from Isserlis's theorem and Lemma \ref{lem10} that 
\begin{align*}
\EE[\zeta_{i,j,k}\zeta_{i',j,k}]
&=\Delta^{-2\alpha}\cov[(T_{i,j,k} X)^2,(T_{i',j,k} X)^2]
+\mathcal R_{i,j,k} \mathcal R_{i',j,k}
\nonumber
\\
&=2\Delta^{-2\alpha}
\cov[T_{i,j,k} X, T_{i',j,k} X]^2
+\mathcal R_{i,j,k} \mathcal R_{i',j,k}
\nonumber
\\
&=
O\biggl(\frac{1}{(|i-i'|+1)^2}\biggr) 
+\mathcal R_{i,j,k} \mathcal R_{i',j,k}.
\end{align*}
Therefore, it holds from the Schwarz inequality and the boundedness of 
$\phi_{r,\alpha}$ that
\begin{equation*}
\EE[Z_{j,k}^2]
=\frac{1}{N^2}\sum_{i,i'=1}^N \EE[\zeta_{i,j,k}\zeta_{i',j,k}]
=O\biggl(\frac{1}{N}\biggr)
\text{ uniformly in } j,k, 
\end{equation*}
and
\begin{align*}
\EE\Biggl[
\sup_{\nu \in \Xi}
\biggl|\frac{1}{m}
\sum_{k=1}^{m_2} \sum_{j=1}^{m_1} 
Z_{j,k} \phi_{r,\alpha}(\bar y_j, \bar z_k:\nu)\biggr|^2
\Biggr]
&\lesssim
\frac{1}{m^2}
\sum_{k,k'=1}^{m_2} \sum_{j,j'=1}^{m_1}
\EE[Z_{j,k}^2]^{1/2} \EE[Z_{j',k'}^2]^{1/2}
\\
&=
\Biggl(
\frac{1}{m} \sum_{k=1}^{m_2} \sum_{j=1}^{m_1} \EE[Z_{j,k}^2]^{1/2}
\Biggr)^2
=O\biggl(\frac{1}{N}\biggr),
\end{align*}
which imply the first and second terms of \eqref{eq-2-4} converge to 0 in probability.
Noting that $\widetilde T_{i,j,k} X = T_{i,j,k} X +T_{i+1,j,k} X$, 
we can show that
\begin{equation*}
\sup_{\nu \in \Xi}|K_{m,N}^{(2)}(\nu)-K^{(2)}(\nu,\nu^*)| \pto 0,
\quad m,N \to \infty
\end{equation*}
in the same way, and therefore we obtain the desired result.

\textbf{Step 3:}
We prove \eqref{eq-2-2}.
It follows from Lemma \ref{lem10} and Proposition \ref{prop1} that
\begin{align*}
&\VV \Biggl[
\frac{1}{\sqrt{m N} \Delta^\alpha}
\sum_{k=1}^{m_2} \sum_{j=1}^{m_1} \sum_{i=1}^N 
(T_{i,j,k}X)^2 \pd_\nu f_{r,\alpha}(\bar y_j, \bar z_k:\nu^*)^\TT
\Biggr]
\\
&=
\frac{1}{m N \Delta^{2\alpha}}
\sum_{k,k'=1}^{m_2} \sum_{j,j'=1}^{m_1} \sum_{i,i'=1}^N 
\cov[(T_{i,j,k}X)^2,(T_{i',j',k'}X)^2] 
\pd_\nu f_{r,\alpha}(\bar y_j, \bar z_k:\nu^*)^\TT
\pd_\nu f_{r,\alpha}(\bar y_{j'}, \bar z_{k'}:\nu^*)
\\
&=
\frac{2}{m N \Delta^{2\alpha}}
\sum_{k,k'=1}^{m_2} \sum_{j,j'=1}^{m_1} \sum_{i,i'=1}^N 
\cov[T_{i,j,k}X, T_{i',j',k'}X]^2 
\pd_\nu f_{r,\alpha}(\bar y_j, \bar z_k:\nu^*)^\TT
\pd_\nu f_{r,\alpha}(\bar y_{j'}, \bar z_{k'}:\nu^*)
\\
&=O(1)
\end{align*}
and 
\begin{equation*}
\sum_{i=1}^N 
\Bigl( \EE[(T_{i,j,k}X)^2] 
- \Delta^\alpha f_{r,\alpha}(\bar y_j, \bar z_k:\nu^*) \Bigr) 
= O(\Delta^\alpha)
\end{equation*}
uniformly in $j,k$. Therefore, we obtain 
\begin{align*}
&\sqrt{m N}\pd_\nu K_{m,N}^{(1)}(\nu^*)^\TT
\\
&=
\sqrt{m N} \times \frac{-2}{m}\sum_{k=1}^{m_2} \sum_{j=1}^{m_1}
\Biggl(
\frac{1}{N \Delta^\alpha}\sum_{i=1}^N (T_{i,j,k}X)^2 
- f_{r,\alpha}(\bar y_j, \bar z_k:\nu^*)
\Biggr)
\pd_\nu f_{r,\alpha}(\bar y_j, \bar z_k:\nu^*)^\TT
\\
&=
\frac{-2}{\sqrt{m N} \Delta^\alpha}
\sum_{k=1}^{m_2} \sum_{j=1}^{m_1} 
\sum_{i=1}^N 
\Bigl( (T_{i,j,k}X)^2 - \EE[(T_{i,j,k}X)^2] \Bigr)
\pd_\nu f_{r,\alpha}(\bar y_j, \bar z_k:\nu^*)^\TT
\\
&\qquad
-\frac{2}{\sqrt{m N} \Delta^\alpha}
\sum_{k=1}^{m_2} \sum_{j=1}^{m_1} 
\sum_{i=1}^N 
\Bigl( \EE[(T_{i,j,k}X)^2] 
- \Delta^\alpha f_{r,\alpha}(\bar y_j, \bar z_k:\nu^*) \Bigr)
\pd_\nu f_{r,\alpha}(\bar y_j, \bar z_k:\nu^*)^\TT
\\
&=O_p(1).
\end{align*}
Similarly, it can be shown that 
$\sqrt{m N}\pd_\nu K_{m,N}^{(2)}(\nu^*)^\TT$ is uniformly tight,
and thus the desired result can be obtained.

\textbf{Step 4:}
We show that for $\epsilon_{m,N} \to 0$,  
\begin{equation}\label{eq-2-5}
\sup_{|\nu-\nu^*|\le \epsilon_{m,N}}
|\pd_\nu^2 K_{m,N}(\nu) - V(\nu^*)| \pto 0,
\quad m,N \to \infty.
\end{equation}
Note that 
\begin{align*}
\pd_\nu^2 K_{m,N}^{(1)}(\nu) &=
\frac{2}{m}\sum_{k=1}^{m_2}\sum_{j=1}^{m_1}
\pd_\nu f_{r,\alpha}(\bar y_j, \bar z_k:\nu)^\TT
\pd_\nu f_{r,\alpha}(\bar y_j, \bar z_k:\nu)
\nonumber
\\
&\qquad
-\frac{2}{m}\sum_{k=1}^{m_2}\sum_{j=1}^{m_1}\Biggl(
\frac{1}{N \Delta^\alpha}\sum_{i=1}^N (T_{i,j,k}X)^2 
-f_{r,\alpha}(\bar y_j, \bar z_k:\nu)
\Biggr)\pd_\nu^2 f_{r,\alpha}(\bar y_j, \bar z_k:\nu)
\nonumber
\\
&=:
\frac{1}{m}\sum_{k=1}^{m_2}\sum_{j=1}^{m_1}
\biggl\{
g_1(\bar y_j, \bar z_k:\nu)
+\biggl(
\frac{1}{N \Delta^\alpha}\sum_{i=1}^N (T_{i,j,k}X)^2 
\biggr)g_2(\bar y_j, \bar z_k:\nu)
\biggr\}.
\end{align*}
Since the function $(y,z,\nu) \mapsto g_\ell(y,z:\nu)$ is continuous 
on the compact set $[b,1-b]^2 \times \Xi$ and 
\begin{equation*}
\sup_{m,N}
\EE\Biggl[
\frac{1}{m N\Delta^\alpha}
\sum_{k=1}^{m_2}\sum_{j=1}^{m_1}\sum_{i=1}^N (T_{i,j,k}X)^2
\Biggr]<\infty,
\end{equation*}
it follows that as $m,N \to \infty$,
\begin{align*}
&\sup_{|\nu-\nu^*|\le \epsilon_{m,N}}
|\pd_\nu^2 K_{m,N}^{(1)}(\nu)-\pd_\nu^2 K_{m,N}^{(1)}(\nu^*)|
\\
&\le
\sup_{\substack{y,z \in [b,1-b],\\ |\nu-\nu^*|\le \epsilon_{m,N}}}
|g_1(y,z:\nu) - g_1(y,z:\nu^*)|
\\
&\qquad
+
\Biggl(
\frac{1}{m N \Delta^\alpha}
\sum_{k=1}^{m_2}\sum_{j=1}^{m_1}\sum_{i=1}^N (T_{i,j,k}X)^2
\Biggr)
\sup_{\substack{y,z \in [b,1-b],\\ |\nu-\nu^*|\le \epsilon_{m,N}}}
|g_2(y,z:\nu) - g_2(y,z:\nu^*)|
\\
&\pto 0.
\end{align*}
By the same reasoning, it holds that
$\sup_{|\nu-\nu^*|\le \epsilon_{m,N}}
|\pd_\nu^2 K_{m,N}^{(2)}(\nu)-\pd_\nu^2 K_{m,N}^{(2)}(\nu^*)| \pto 0$
as $m,N \to \infty$, and thus
\begin{equation}\label{eq-2-6}
\sup_{|\nu-\nu^*|\le \epsilon_{m,N}}
|\pd_\nu^2 K_{m,N}(\nu)-\pd_\nu^2 K_{m,N}(\nu^*)| \pto 0.
\end{equation}
Meanwhile, 
in the same way as Step 2, it holds that as $m, N \to \infty$,
\begin{equation*}
\frac{1}{m}
\sum_{k=1}^{m_2}\sum_{j=1}^{m_1}
\pd_\nu f_{r,\alpha}(\bar y_j, \bar z_k:\nu^*)^\TT
\pd_\nu f_{r,\alpha}(\bar y_j, \bar z_k:\nu^*)
\to V^{(1)}(\nu^*),
\end{equation*}
\begin{align*}
\frac{1}{m}\sum_{k=1}^{m_2}\sum_{j=1}^{m_1}\Biggl(
\frac{1}{N \Delta^\alpha}\sum_{i=1}^N (T_{i,j,k}X)^2 
-f_{r,\alpha}(\bar y_j, \bar z_k:\nu^*)
\Biggr)\pd_\nu^2 f_{r,\alpha}(\bar y_j, \bar z_k:\nu^*)
\pto 0,
\end{align*}
and hence we have  
\begin{equation*}
\pd_\nu^2 K_{m,N}^{(1)}(\nu^*) \pto 2V^{(1)}(\nu^*),
\quad
m, N \to \infty.
\end{equation*}
Since 
it can be similarly shown that
$\pd_\nu^2 K_{m,N}^{(2)}(\nu^*)$ converges to $2V^{(2)}(\nu^*)$ in probability,
we obtain
\begin{equation*}
\pd_\nu^2 K_{m,N}(\nu^*) \pto V(\nu^*),
\quad 
m, N \to \infty,
\end{equation*}
which together with \eqref{eq-2-6} yields \eqref{eq-2-5}.

\textbf{Step 5:} 
We show that $V=V(\nu^*)$ is strictly positive definite.
Let 
$g_p = \pd_{\nu_p} f_{r,\alpha}(\cdot,\cdot:\nu^*)$ and
$h_p = \pd_{\nu_p} f_{r/\sqrt{2},\alpha}(\cdot,\cdot:\nu^*)$
for $p=1,\ldots,4$.
It follows that for any $u = (u_p)_{p=1}^4 \in \mathbb R^4 \setminus \{0\}$, 
\begin{equation*}
u^\TT V u 
=\sum_{p,q=1}^4 u_p u_q \langle g_p, g_q \rangle_b
+\sum_{p,q=1}^4 u_p u_q \langle h_p, h_q \rangle_b
=
\Biggl\| \sum_{p=1}^4 u_p g_p \Biggr\|_b^2
+\Biggl\| \sum_{p=1}^4 u_p h_p \Biggr\|_b^2
\ge0.
\end{equation*}
Now, we assume that there exists $u \in \mathbb R^4\setminus\{0\}$ such that
\begin{equation}\label{eq-2-7}
\Biggl\| \sum_{p=1}^4 u_p g_p \Biggr\|_b^2
+\Biggl\| \sum_{p=1}^4 u_p h_p \Biggr\|_b^2
=0.
\end{equation}
Since
\begin{equation*}
\sum_{p=1}^4 u_p g_p(y,z) = 0,
\quad
\sum_{p=1}^4 u_p h_p(y,z) = 0,
\quad (y,z) \in [b,1-b]^2  
\end{equation*}
and $g=(g_p)_{p=1}^4$ is given by 
\begin{equation*}
g(y,z) = \ee^{-\kappa y} \ee^{-\eta z} 
(-y \sigma^2 \psi_{r,\alpha}(\theta_2), -z \sigma^2 \psi_{r,\alpha}(\theta_2), 
\sigma^2 \psi_{r,\alpha}'(\theta_2), \psi_{r,\alpha}(\theta_2)),
\end{equation*}
we have
\begin{align}
u_1 \sigma^2 \psi_{r,\alpha}(\theta_2) &= 0,
\label{eq-2-8}
\\
u_2 \sigma^2 \psi_{r,\alpha}(\theta_2) &= 0,
\label{eq-2-9}
\\
u_3 \sigma^2 \psi_{r,\alpha}'(\theta_2) + u_4 \psi_{r,\alpha}(\theta_2)
&= 0,
\label{eq-2-10}
\\
u_3 \sigma^2 \psi_{r/\sqrt{2},\alpha}'(\theta_2) 
+ u_4 \psi_{r/\sqrt{2},\alpha}(\theta_2) &= 0.
\label{eq-2-11}
\end{align}

Note that $\psi_{r,\alpha}(\theta_2) \neq 0$.
\eqref{eq-2-8} and \eqref{eq-2-9} yield $u_1 = u_2 = 0$, and 
\eqref{eq-2-10} and \eqref{eq-2-11} imply
\begin{equation*}
0=
u_4(-\psi_{r,\alpha}(\theta_2)\psi_{r/\sqrt{2},\alpha}'(\theta_2)
+\psi_{r/\sqrt{2},\alpha}(\theta_2)\psi_{r,\alpha}'(\theta_2))
=u_4 \psi_{r/\sqrt{2},\alpha}^2(\theta_2) 
\biggl(\frac{\psi_{r,\alpha}(\theta_2)}{\psi_{r/\sqrt{2},\alpha}(\theta_2)}\biggr)'.
\end{equation*}
Since $(\psi_{r,\alpha}(\theta_2)/\psi_{r/\sqrt{2},\alpha}(\theta_2))' \neq 0$
for $\alpha \in (0,2)$, which follows from 
Lemmas \ref{lem11} and \ref{lem12}, 
we get $u_4=0$ and $u_3=0$. 
This contradicts the assumption that $u \in \mathbb R^4 \setminus \{0\}$.
Therefore, 
there does not exist $u \in \mathbb R^4 \setminus \{0\}$ 
satisfying \eqref{eq-2-7},
which implies that $u^\TT V u > 0$.
\end{proof}

\begin{proof}[\bf{Proof of Theorem \ref{th2}}]
(ii) For a detailed proof, we refer to Theorem 3.3 
in Tonaki et al.\,\cite{TKU2022arXiv1}.
We first show that
\begin{equation}\label{eq-3-1}
\sqrt{n}
\begin{pmatrix}
\hat\sigma_{1,1}^2-(\sigma_{1,1}^*)^2
\\
\hat\sigma_{1,2}^2-(\sigma_{1,2}^*)^2
\end{pmatrix}
\dto 
N
\Biggl(0,2
\begin{pmatrix}
(\sigma_{1,1}^*)^4 & 0
\\
0 & (\sigma_{1,2}^*)^4 
\end{pmatrix}
\Biggr).
\end{equation}
In order to show \eqref{eq-3-1}, it is enough to verify that
under $\frac{n^{2-\alpha+\tau}}{(M_1 \land M_2)^{2\tau}}\to0$ 
for some $0<\tau \le 1$ such that $\tau < \alpha$, 
\begin{align*}
\sqrt{n}\Biggl(
\hat\sigma_{\ell_1,\ell_2}^2
-\sum_{i=1}^{n}
(x_{\ell_1,\ell_2}(\widetilde t_i)-x_{\ell_1,\ell_2}(\widetilde t_{i-1}))^2
\Biggr)
=o_p(1),
\end{align*}
that is, for $k=1,2,3$,
\begin{equation*}
n \sum_{i=1}^n \mathcal B_{k,i}^2 = o_p(1),
\end{equation*}
where
$\widetilde \Delta_i X(y,z)=X_{\widetilde t_i}(y,z)-X_{\widetilde t_{i-1}}(y,z)$,
\begin{align*}
\mathcal B_{1,i} 
&= \frac{1}{M}\sum_{j=1}^{M_1}\sum_{k=1}^{M_2}
\widetilde \Delta_i X(y_j,z_k)
\sin(\pi \ell_1 y_j)\sin(\pi \ell_2 z_k)
(\ee^{\hat\kappa y_j/2}\ee^{\hat\eta z_k/2}
-\ee^{\kappa^* y_j/2}\ee^{\eta^* z_k/2}),
\\
\mathcal B_{2,i} &= 
\sum_{j=1}^{M_1}\sum_{k=1}^{M_2}
\widetilde \Delta_i X(y_j,z_k)
\\
&\qquad \times
\int_{z_{k-1}}^{z_k} \int_{y_{j-1}}^{y_j}
(\sin(\pi \ell_1 y_j)\sin(\pi \ell_2 z_k)
\ee^{\kappa^* y_j/2}\ee^{\eta^* z_k/2}
-\sin(\pi \ell_1 y)\sin(\pi \ell_2 z)
\ee^{\kappa^* y/2}\ee^{\eta^* z/2}) \dd y\dd z,
\\
\mathcal B_{3,i} &= 
\sum_{j=1}^{M_1}\sum_{k=1}^{M_2}
\int_{z_{k-1}}^{z_k} \int_{y_{j-1}}^{y_j}
(\widetilde \Delta_i X(y_j,z_k)-\widetilde \Delta_i X(y,z))
\sin(\pi \ell_1 y)\sin(\pi \ell_2 z)
\ee^{\kappa^* y/2}\ee^{\eta^* z/2}
\dd y\dd z.
\end{align*}

We obtain from the Taylor expansion that 
\begin{equation*}
n \sum_{i=1}^n \mathcal B_{1,i}^2
\le\mathcal C_n
\times m N(|\hat\kappa-\kappa^*|^2+|\hat\eta-\eta^*|^2),
\end{equation*}
where $\mathcal C_n$ satisfies that
for $\epsilon_1, \epsilon_2>0$, 
\begin{align*}
\mathcal C_n
\lesssim
\frac{n}{mN}\sum_{i=1}^{n}
\frac{1}{M}
\sum_{j=1}^{M_1}\sum_{k=1}^{M_2}
(\widetilde \Delta_i X)^2(y_j,z_k)
\end{align*}
on $\Omega_1=\{|\hat\kappa-\kappa^*|+|\hat\eta-\eta^*|<\epsilon_1\}$ and 
\begin{align*}
P(|\mathcal C_n|>\epsilon_2)
\lesssim
\frac{n^{2-\alpha}}{\epsilon_2 m N}+P(\Omega_1^{\mathrm c}),
\end{align*}
which together with $\frac{n^{2-\alpha}}{m N}\to0$ and Theorem \ref{th1}
establishes $n\sum_{i=1}^{n} \mathcal B_{1,i}^2 =o_p(1)$.
Noting that $\EE[\mathcal B_{2,i}^2] \lesssim \frac{1}{n^\alpha (M_1 \land M_2)^2}$,
we see that under $\frac{n^{2-\alpha+\tau}}{(M_1 \land M_2)^{2\tau}}\to0$, 
\begin{equation*}
n\sum_{i=1}^{n}
\mathcal B_{2,i}^2
=O_p\biggl(
\frac{n^{2-\alpha}}{(M_1 \land M_2)^2}
\biggr)
=o_p(1).
\end{equation*}
Since it follows that for $0< \tau \le 1$ with $\tau<\alpha$,
\begin{align*}
\EE\bigl[(\widetilde \Delta_i X(y_j,z_k)-\widetilde \Delta_i X(y,z))^2\bigr]
\lesssim
\frac{1}{(M_1 \land M_2)^{2\tau}}
\sum_{\ell_1, \ell_2 \ge 1}
\frac{1-\ee^{-\lambda_{\ell_1, \ell_2}/n}}
{\lambda_{\ell_1, \ell_2}^{1+\alpha-\tau}}
=O\biggl(
\frac{1}{n^{\alpha-\tau}(M_1 \land M_2)^{2\tau}}
\biggr)
\end{align*}
uniformly in $i,j,k$ and $(y,z) \in (y_{j-1},y_j]\times (z_{k-1},z_k]$,
we obtain that under $\frac{n^{2-\alpha+\tau}}{(M_1 \land M_2)^{2\tau}}\to0$, 
\begin{align*}
n\sum_{i=1}^{n}
\mathcal B_{3,i}^2
&\lesssim
n\sum_{i=1}^{n}
\sum_{j=1}^{M_1}\sum_{k=1}^{M_2}
\int_{z_{k-1}}^{z_k} \int_{y_{j-1}}^{y_j}
\bigl(\widetilde \Delta_i X(y_j,z_k)-\widetilde \Delta_i X(y,z)\bigr)^2
\dd y\dd z
\\
&=O_p\biggl(
\frac{n^{2-\alpha+\tau}}{(M_1 \land M_2)^{2\tau}}
\biggr)
=o_p(1).
\end{align*}

We next prove \eqref{eq-Th2.3}. It follows from
\begin{equation*}
\check \theta_2 = 
\frac{\check \lambda_{1,2}-\check \lambda_{1,1}}{3\pi^2}
= \frac{(\hat \sigma^2)^{1/\alpha}}{3\pi^2}
( (\hat \sigma_{1,2}^2)^{-1/\alpha} - (\hat \sigma_{1,1}^2)^{-1/\alpha} )
\end{equation*}
that
\begin{align*}
\sqrt{n} (\check \theta_2 - \theta_2^*) 
&= 
\frac{\sqrt{n}}{3\pi^2}
\Bigl[
(\hat \sigma^2)^{1/\alpha} 
\{ (\hat \sigma_{1,2}^2)^{-1/\alpha} - (\hat \sigma_{1,1}^2)^{-1/\alpha} \}
-(\sigma^*)^{2/\alpha} 
\{ (\sigma_{1,2}^*)^{-2/\alpha} - (\sigma_{1,1}^*)^{-2/\alpha} \}
\Bigr]
\\
&=
\frac{\sqrt{n}}{3\pi^2}
\{ (\hat \sigma^2)^{1/\alpha} - (\sigma^*)^{2/\alpha} \}
\{ (\hat \sigma_{1,2}^2)^{-1/\alpha} - (\hat \sigma_{1,1}^2)^{-1/\alpha} \}
\\
&\qquad+ 
\frac{\sqrt{n}(\sigma^*)^{2/\alpha}}{3\pi^2}
\Bigl[ \{(\hat \sigma_{1,2}^2)^{-1/\alpha} - (\hat \sigma_{1,1}^2)^{-1/\alpha} \}
- \{ (\sigma_{1,2}^*)^{-2/\alpha} - (\sigma_{1,1}^*)^{-2/\alpha} \}
\Bigr].
\end{align*}
Let $F(x,y) = y^{-1/\alpha} - x^{-1/\alpha}$ for $x,y>0$. 
Since it holds 
that
\begin{align*}
&\sqrt{n}
\Bigl[ \{(\hat \sigma_{1,2}^2)^{-1/\alpha} - (\hat \sigma_{1,1}^2)^{-1/\alpha} \}
- \{ (\sigma_{1,2}^*)^{-2/\alpha} - (\sigma_{1,1}^*)^{-2/\alpha} \}
\Bigr]
\\
&=
\sqrt{n} \{ F(\hat \sigma_{1,1}^2, \hat \sigma_{1,2}^2) 
-F((\sigma_{1,1}^*)^2, (\sigma_{1,2}^*)^2) \}
\\
&= 
\pd F( (\sigma_{1,1}^*)^2, (\sigma_{1,2}^*)^2 )
\sqrt{n} 
\begin{pmatrix}
\hat \sigma_{1,1}^2 - (\sigma_{1,1}^*)^2 
\\
\hat \sigma_{1,2}^2 - (\sigma_{1,2}^*)^2
\end{pmatrix}
+o_p(1)
\\
&=
\frac{1}{\alpha} ((\sigma_{1,1}^*)^{-2/\alpha-2}, -(\sigma_{1,2}^*)^{-2/\alpha-2} )
\sqrt{n} 
\begin{pmatrix}
\hat \sigma_{1,1}^2 - (\sigma_{1,1}^*)^2
\\
\hat \sigma_{1,2}^2 - (\sigma_{1,2}^*)^2
\end{pmatrix}
+o_p(1),
\end{align*}
we obtain from
$\sqrt{m N}\{(\hat \sigma^2)^{1/\alpha} - (\sigma^*)^{2/\alpha}\}=O_p(1)$ 
that
\begin{align*}
\sqrt{n} (\check \theta_2 - \theta_2^*) 
&= 
\frac{(\sigma^*)^{2/\alpha}}{3\pi^2\alpha}
((\sigma_{1,1}^*)^{-2/\alpha-2}, -(\sigma_{1,2}^*)^{-2/\alpha-2} )
\sqrt{n} 
\begin{pmatrix}
\hat \sigma_{1,1}^2 - (\sigma_{1,1}^*)^2
\\
\hat \sigma_{1,2}^2 - (\sigma_{1,2}^*)^2
\end{pmatrix}
+o_p(1)
\\
&= 
\frac{(\sigma^*)^{-2}}{3\pi^2\alpha}
((\lambda_{1,1}^*)^{1+\alpha}, -(\lambda_{1,2}^*)^{1+\alpha} )
\sqrt{n} 
\begin{pmatrix}
\hat \sigma_{1,1}^2 - (\sigma_{1,1}^*)^2
\\
\hat \sigma_{1,2}^2 - (\sigma_{1,2}^*)^2
\end{pmatrix}
+o_p(1).
\end{align*}
Similarly, it follows that
\begin{align*}
\sqrt{n} (\check \theta_0 - \theta_0^*) 
&=
-\sqrt{n}(\check \lambda_{1,1}-\lambda_{1,1}^*) + o_p(1)
\\
&= 
-(\sigma^*)^{2/\alpha}
\sqrt{n}
\{ (\hat \sigma_{1,1}^2)^{-1/\alpha} - (\sigma_{1,1}^*)^{-2/\alpha} \} + o_p(1)
\\
&=
-\frac{(\sigma^*)^{2/\alpha}}{\alpha} (\sigma_{1,1}^*)^{-2/\alpha-2}
\sqrt{n} \{ \hat \sigma_{1,1}^2 - (\sigma_{1,1}^*)^2 \}
+o_p(1)
\\
&= 
-\frac{(\sigma^*)^{-2}}{\alpha}
((\lambda_{1,1}^*)^{1+\alpha}, 0 )
\sqrt{n} 
\begin{pmatrix}
\hat \sigma_{1,1}^2 - (\sigma_{1,1}^*)^2
\\
\hat \sigma_{1,2}^2 - (\sigma_{1,2}^*)^2
\end{pmatrix}
+o_p(1).
\end{align*}
Moreover, one has   
\begin{align*}
\sqrt{n} (\check \theta_1 - \theta_1^*) 
&= \sqrt{n}(\hat \kappa - \kappa^*) \check \theta_2
+\kappa^* \sqrt{n}(\check \theta_2 - \theta_2^*)
\\
&= 
\frac{\kappa^* (\sigma^*)^{-2}}{3\pi^2\alpha}
((\lambda_{1,1}^*)^{1+\alpha}, -(\lambda_{1,2}^*)^{1+\alpha} )
\sqrt{n} 
\begin{pmatrix}
\hat \sigma_{1,1}^2 - (\sigma_{1,1}^*)^2
\\
\hat \sigma_{1,2}^2 - (\sigma_{1,2}^*)^2
\end{pmatrix}
+o_p(1),
\end{align*}
\begin{equation*}
\sqrt{n} (\check \eta_1 - \eta_1^*) 
= \frac{\eta^* (\sigma^*)^{-2}}{3\pi^2\alpha}
((\lambda_{1,1}^*)^{1+\alpha}, -(\lambda_{1,2}^*)^{1+\alpha} )
\sqrt{n} 
\begin{pmatrix}
\hat \sigma_{1,1}^2 - (\sigma_{1,1}^*)^2
\\
\hat \sigma_{1,2}^2 - (\sigma_{1,2}^*)^2
\end{pmatrix}
+o_p(1),
\end{equation*}
\begin{equation*}
\sqrt{n} (\check \sigma^2 - (\sigma^*)^2) 
= \frac{(\theta_2^*)^{-1}}{3\pi^2\alpha}
((\lambda_{1,1}^*)^{1+\alpha}, -(\lambda_{1,2}^*)^{1+\alpha} )
\sqrt{n} 
\begin{pmatrix}
\hat \sigma_{1,1}^2 - (\sigma_{1,1}^*)^2
\\
\hat \sigma_{1,2}^2 - (\sigma_{1,2}^*)^2
\end{pmatrix}
+o_p(1).
\end{equation*}
Therefore, setting
\begin{equation*}
\boldsymbol v_1 = (-3\pi^2, \kappa^*, \eta^*, 1, (\sigma^*)^2/\theta_2^*)^\TT,
\quad
\boldsymbol v_2 = (0, \kappa^*, \eta^*, 1, (\sigma^*)^2/\theta_2^*)^\TT,
\end{equation*}
we obtain from \eqref{eq-3-1} that
\begin{align*}
\sqrt{n}
\begin{pmatrix}
\check \theta_0 - \theta_0^*
\\
\check \theta_1 - \theta_1^*
\\
\check \eta_1 - \eta_1^*
\\
\check \theta_2 - \theta_2^*
\\
\check \sigma^2 - (\sigma^*)^2
\end{pmatrix}
&=
\frac{(\sigma^*)^{-2}}{3 \pi^2 \alpha}
((\lambda_{1,1}^*)^{1+\alpha} \boldsymbol v_1, 
-(\lambda_{1,2}^*)^{1+\alpha} \boldsymbol v_2)
\sqrt{n} 
\begin{pmatrix}
\hat \sigma_{1,1}^2 - (\sigma_{1,1}^*)^2
\\
\hat \sigma_{1,2}^2 - (\sigma_{1,2}^*)^2
\end{pmatrix}
+o_p(1)
\\
&\dto N(0,\mathcal J),
\end{align*}
where $\mathcal J$ is expressed by using 
$\boldsymbol\theta_{-1}^* = (\theta_1^*, \eta_1^*, \theta_2^*, (\sigma^*)^2)$
as follows.
\begin{align*}
\mathcal J &= 
\frac{2(\sigma^*)^{-4}}{9 \pi^4 \alpha^2}
((\lambda_{1,1}^*)^{1+\alpha} \boldsymbol v_1, 
-(\lambda_{1,2}^*)^{1+\alpha} \boldsymbol v_2)
\begin{pmatrix}
(\sigma_{1,1}^*)^4 & 0
\\
0 & (\sigma_{1,2}^*)^4 
\end{pmatrix}
((\lambda_{1,1}^*)^{1+\alpha} \boldsymbol v_1, 
-(\lambda_{1,2}^*)^{1+\alpha} \boldsymbol v_2)^\TT
\\
&=
\frac{2}{9\pi^4 \alpha^2} 
((\lambda_{1,1}^*)^2 \boldsymbol v_1 \boldsymbol v_1^\TT 
+ (\lambda_{1,2}^*)^2 \boldsymbol v_2 \boldsymbol v_2^\TT )
\\
&= 2
\begin{pmatrix}
(\lambda_{1,1}^*)^2 & 
\frac{-(\lambda_{1,1}^*)^2}{3\pi^2\theta_2^*}\boldsymbol\theta_{-1}^*
\\
\frac{-(\lambda_{1,1}^*)^2}{3\pi^2\theta_2^*} (\boldsymbol\theta_{-1}^*)^\TT &
\frac{(\lambda_{1,1}^*)^2 + (\lambda_{1,1}^*)^2}{9\pi^4 (\theta_2^*)^2 \alpha^2}
(\boldsymbol\theta_{-1}^*)^\TT \boldsymbol\theta_{-1}^*
\end{pmatrix}
\\
&= 2
\begin{pmatrix}
J_{1,1} & J_{1,2}
\\
J_{1,2}^\TT & J_{2,2}
\end{pmatrix}
.
\end{align*}

(i) Similarly, it can be shown that under 
$\frac{n^{1-\alpha+\tau}}{(M_1 \land M_2)^{2\tau}}\to0$ 
for some $0<\tau \le 1$ such that $\tau < \alpha$, 
\begin{equation*}
\hat\sigma_{\ell_1,\ell_2}^2
-\sum_{i=1}^{n}
(x_{\ell_1,\ell_2}(\widetilde t_i)-x_{\ell_1,\ell_2}(\widetilde t_{i-1}))^2
=o_p(1),
\end{equation*}
and thus we have $\check \theta_0 \pto \theta_0^*$.
\end{proof}

\subsection{Proofs of the results in Subsection \ref{sec2-3}}
\begin{proof}[\bf{Proof of Theorem \ref{th3}}]
Since it holds from Proposition \ref{prop1} and Lemma \ref{lem13} that under [A1], 
\begin{equation}\label{eq-999-0}
\EE[(T_{i,j,k}X)^2]
=\Delta^\alpha \sigma^2
\ee^{-\kappa(\widetilde y_{j-1}+\widetilde y_j)/2}
\ee^{-\eta (\widetilde z_{k-1}+\widetilde z_k)/2}
\widetilde \psi_{r,\alpha}(\theta_2)
+ R_{i,j,k} + O(\Delta^{1+\alpha}),
\end{equation}
where $\sum_{i=1}^N R_{i,j,k}=O(\Delta^{\alpha})$ uniformly in $j,k$, 
and it follows from Lemmas \ref{lem11} and \ref{lem12} that the function
$\theta_2 \mapsto \widetilde \psi_{r,\alpha}(\theta_2)
/\widetilde \psi_{r/\sqrt{2},\alpha}(\theta_2)$
is injective on $(\underline{\theta_2}(r,\alpha), \infty)$ 
and the parameters $(\theta_2, \kappa, \eta, \sigma^2)$ are identifiable, 
it can be shown in the same way as Theorem \ref{th1}.
\end{proof}

\begin{proof}[\bf{Proof of Theorem \ref{th4}}]
Since the proof is similar to that of Theorem \ref{th2}, 
we only verify that the asymptotic variances are given by $\mathcal K$ and $\mathcal L$ 
when $\mu_0$ is known and unknown, respectively.
Note that
\begin{equation}\label{eq-999-1}
\sqrt{n}
\begin{pmatrix}
\tilde \sigma_{1,1}^2-(\varsigma_{1,1}^*)^2
\\
\tilde \sigma_{1,2}^2-(\varsigma_{1,2}^*)^2
\end{pmatrix}
\dto 
N
\Biggl(0,2
\begin{pmatrix}
(\varsigma_{1,1}^*)^4 & 0
\\
0 & (\varsigma_{1,2}^*)^4 
\end{pmatrix}
\Biggr)
\end{equation}
is established as well as \eqref{eq-3-1}.

(a) Consider the case where $\mu_0$ is known.
Since it follows from \eqref{eq-999-1} that
$\sqrt{n}(\bar \sigma^2 -(\sigma^*)^2) \dto N(0,2(\sigma^*)^4)$, 
we see from Theorem \ref{th3} that
\begin{equation*}
\sqrt{n}
\begin{pmatrix}
\bar \theta_1 - \theta_1^*
\\
\bar \eta_1 - \eta_1^*
\\
\bar \theta_2 - \theta_2^*
\\
\bar \sigma^2 - (\sigma^*)^2
\end{pmatrix}
= (\sigma^*)^{-2} 
\begin{pmatrix}
\theta_1^*
\\
\eta_1^*
\\
\theta_2^*
\\
(\sigma^*)^2
\end{pmatrix}
\sqrt{n} (\bar \sigma^2 - (\sigma^*)^2) +o_p(1)
\dto N(0,\mathcal K).
\end{equation*}

(b) Consider the case where $\mu_0$ is unknown.
Let
$\Phi(x,y) = 3\pi^2 (x^{1/\alpha} y^{-1/\alpha} -1)^{-1}$, 
$\Psi(x,y) = \{ 3\pi^2 (y^{-1/\alpha} -x^{1/\alpha})^{-1}\}^\alpha$,
$\boldsymbol p_1 = \partial \Phi((\varsigma_{1,1}^*)^2,(\varsigma_{1,2}^*)^2)$
and $\boldsymbol p_2 = \partial \Psi((\varsigma_{1,1}^*)^2,(\varsigma_{1,2}^*)^2)$.
A simple calculation yields
\begin{align*}
\boldsymbol p_1 
\begin{pmatrix}
(\varsigma_{1,1}^*)^4 & 0
\\
0 & (\varsigma_{1,2}^*)^4 
\end{pmatrix}
\boldsymbol p_1^\TT
&= \frac{2(\mu_{1,1}^*)^2(\mu_{1,2}^*)^2}{9 \pi^4 \alpha^2},
\\
\boldsymbol p_1 
\begin{pmatrix}
(\varsigma_{1,1}^*)^4 & 0
\\
0 & (\varsigma_{1,2}^*)^4 
\end{pmatrix}
\boldsymbol p_2^\TT
&= \frac{\mu_{1,1}^*\mu_{1,2}^* (\mu_{1,1}^* +\mu_{1,2}^*)}{9 \pi^4 \alpha},
\\
\boldsymbol p_2 
\begin{pmatrix}
(\varsigma_{1,1}^*)^4 & 0
\\
0 & (\varsigma_{1,2}^*)^4 
\end{pmatrix}
\boldsymbol p_2^\TT
&= \frac{(\mu_{1,1}^*)^2 +(\mu_{1,2}^*)^2}{9 \pi^4},
\end{align*}
and therefore it follows from Theorem \ref{th3} and \eqref{eq-999-1} that
\begin{equation*}
\sqrt{n}
\begin{pmatrix}
\breve \mu_0 - \mu_0^*
\\
\breve \theta_1 - \theta_1^*
\\
\breve \eta_1 - \eta_1^*
\\
\breve \theta_2 - \theta_2^*
\\
\breve \sigma^2 - (\sigma^*)^2
\end{pmatrix}
= 
\left(
\begin{pmatrix}
1
\\
0
\\
0
\\
0
\\
0
\end{pmatrix}
\boldsymbol p_1 +
\begin{pmatrix}
0
\\
\theta_1^*
\\
\eta_1^*
\\
\theta_2^*
\\
(\sigma^*)^2
\end{pmatrix}
\boldsymbol p_2
\right)
\sqrt{n}
\begin{pmatrix}
\tilde \sigma_{1,1}^2-(\varsigma_{1,1}^*)^2
\\
\tilde \sigma_{1,2}^2-(\varsigma_{1,2}^*)^2
\end{pmatrix}
+o_p(1)
\dto N(0,\mathcal L).
\end{equation*}
\end{proof}

\subsection{Auxiliary results for Riemann summation and Fourier series}
We prepare several lemmas on Riemann summation and Fourier series.

Let $\alpha \in (0,2)$, $\beta \in \mathbb R$, $\gamma,\gamma_1,\gamma_2,J \ge0$ 
and $p \in \mathbb N \cup \{0\}$. 
We set some spaces of functions.

\begin{enumerate}
\item[1.]
Let $\boldsymbol F_{J,\beta}(\mathbb R_{+})$ be 
the subspace of functions in $\boldsymbol C^2(\mathbb R_{+})$, 
whose element $f$ satisfies
\begin{itemize}
\item[(i)]
$x^{\beta} f(x)$, $x^{\beta+1} f'(x)$, 
$x^{\beta+2} f''(x)\lesssim \ee^{-J x}$ ($x \to 0$), 

\item[(ii)]
$x^{\alpha+1} f(x)$, $x^{\alpha+2} f'(x)$,  
$x^{\alpha+3} f''(x) \lesssim \ee^{-J x}$ ($x \to \infty$).
\end{itemize}

\item[2.]
Let $\boldsymbol G^p_\gamma(\mathbb R^2)$ be 
the subspace of functions in $\boldsymbol C^p(\mathbb R^2)$,
whose element $g$ satisfies 
\begin{itemize}
\item[(i)]
$|\pd^j g(x,y)| \lesssim (x^2+y^2)^{\gamma+j/2}$ ($x, y \to 0$) for $j=0,\ldots, p$.

\item[(ii)]
$|\pd^j g(x,y)| \lesssim 1$ ($x, y \to \infty$) for $j=0,\ldots, p$.
\end{itemize}
In particular, 
we denote $\boldsymbol G_\gamma(\mathbb R^2) = \boldsymbol G^0_\gamma(\mathbb R^2)$.

\item[3.]
Let $\widetilde {\boldsymbol G}_{\gamma_1,\gamma_2}(\mathbb R^2)$ 
be the subspace of functions in $\boldsymbol C(\mathbb R^2)$,
whose element $g$ satisfies 
\begin{itemize}
\item[(i)]
there exist $g_1,g_2 \in \boldsymbol C(\mathbb R)$ such that $g(x,y) = g_1(x)g_2(y)$,

\item[(ii)]
$g_j(s) \lesssim s^{2\gamma_j}$ ($s \to 0$) for $j=1,2$,

\item[(iii)]
$g_j(s) \lesssim 1$ ($s \to \infty$) for $j=1,2$. 
\end{itemize}

\item[4.]
Let $\boldsymbol B_b(\mathbb R^2)$
be the space of bounded and measurable functions on $\mathbb R^2$.

\end{enumerate}

We will often use the following properties.
\begin{equation*}
\sum_{1\le \ell \le L} \frac{1}{\ell^\beta} =
\begin{cases}
O(L^{1-\beta}), & \beta < 1,
\\
O(\log L), & \beta = 1,
\\
O(1), & \beta >1,
\end{cases}
\quad
\sum_{\ell > L}\frac{1}{\ell^\beta}
=O\Bigl(\frac{1}{L^{\beta-1}}\Bigr), \quad \beta>1.
\end{equation*}
\begin{equation*}
\sum_{1\le \ell_1^2 + \ell_2^2 \le L^2} 
\frac{1}{(\ell_1^2+\ell_2^2)^\beta} =
\begin{cases}
O(L^{2(1-\beta)}), & \beta < 1,
\\
O(\log L), & \beta = 1,
\\
O(1), & \beta >1,
\end{cases}
\quad
\sum_{\ell_1^2+\ell_2^2 > L^2} 
\frac{1}{(\ell_1^2+\ell_2^2)^\beta}
=O\Bigl(\frac{1}{L^{2(\beta-1)}}\Bigr), \quad \beta>1.
\end{equation*}

\begin{lem}\label{lem1}
Let $\alpha \in (0,2)$ and $f \in \boldsymbol F_{0,\alpha}(\mathbb R_{+})$.
\begin{itemize}
\item[(1)]
For $\gamma \ge 0$, $g \in \boldsymbol G_\gamma(\mathbb R^2)$ 
and $h \in \boldsymbol B_b(\mathbb R^2)$, 
it holds that as $\epsilon \to 0$, 
\begin{align}
&\sum_{\ell_1,\ell_2\ge1} 
f(\lambda_{\ell_1,\ell_2}\epsilon^2) 
g(\ell_1 \epsilon,\ell_2 \epsilon) h(\ell_1 y, \ell_2 z)
\nonumber
\\
&=
\sum_{\ell_1,\ell_2\ge1} 
f(\theta_2\pi^2(\ell_1^2+\ell_2^2)\epsilon^2) 
g(\ell_1 \epsilon,\ell_2 \epsilon) h(\ell_1 y, \ell_2 z)
+
\begin{cases}
O(1), & \alpha < \gamma,
\\
O(\log\frac{1}{\epsilon}), & \alpha = \gamma,
\\
O(\epsilon^{2(\gamma-\alpha)}), & \alpha > \gamma
\end{cases}
\label{eq-lem1-1}
\end{align}
uniformly in $y,z\in\mathbb R$. In particular, as $\epsilon \to 0$,
\begin{equation}
\sum_{\ell_1,\ell_2\ge1} 
f(\lambda_{\ell_1,\ell_2}\epsilon^2) 
g(\ell_1 \epsilon,\ell_2 \epsilon) h(\ell_1 y, \ell_2 z)
=
\begin{cases}
O(\epsilon^{-2}), & \alpha < \gamma+1,
\\
O(\epsilon^{-2}\log\frac{1}{\epsilon}), & \alpha = \gamma+1,
\\
O(\epsilon^{2(\gamma-\alpha)}), & \alpha > \gamma+1
\end{cases}
\label{eq-lem1-2}
\end{equation}
uniformly in $y,z\in\mathbb R$.

\item[(2)]
For $\gamma > \alpha \lor 3/2$ and 
$g \in \widetilde {\boldsymbol G}_{\gamma-1,1}(\mathbb R^2) 
\cap \boldsymbol G_\gamma^2(\mathbb R^2)$, 
it holds that as $\epsilon \to 0$, 
\begin{equation}
\epsilon^2 \sum_{\ell_1,\ell_2\ge1}
f((\ell_1^2+\ell_2^2)\epsilon^2) g(\ell_1 \epsilon, \ell_2 \epsilon)
=\iint_{\mathbb R_{+}^2} f(x^2+y^2) g(x,y) \dd x \dd y
+O(\epsilon^2).
\label{eq-lem1-3}
\end{equation}

\end{itemize}
\end{lem}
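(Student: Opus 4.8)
The plan is to treat the two parts separately. Part (1) is a perturbation estimate — replacing the scaled eigenvalue $\lambda_{\ell_1,\ell_2}\epsilon^2$ by its principal part $\theta_2\pi^2(\ell_1^2+\ell_2^2)\epsilon^2$ costs only the stated error, and \eqref{eq-lem1-2} then follows by estimating the principal-part sum directly — while part (2) is a Riemann-sum approximation with rate $O(\epsilon^2)$. Throughout I would use that $\theta_2>0$ and $\lambda_{1,1}>0$ force $\lambda_{\ell_1,\ell_2}\asymp\ell_1^2+\ell_2^2$ uniformly in $\ell_1,\ell_2\ge1$, that $f\in\boldsymbol F_{0,\alpha}(\mathbb R_+)$ yields $|f(s)|\lesssim s^{-\alpha}$, $|f'(s)|\lesssim s^{-\alpha-1}$ as $s\to0$ and $|f(s)|\lesssim s^{-\alpha-1}$, $|f'(s)|\lesssim s^{-\alpha-2}$ as $s\to\infty$, and the dyadic lattice sums recalled just before the lemma.

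For \eqref{eq-lem1-1}, the mean value theorem gives
\[
f(\lambda_{\ell_1,\ell_2}\epsilon^2)-f\bigl(\theta_2\pi^2(\ell_1^2+\ell_2^2)\epsilon^2\bigr)=\theta_2\Gamma\epsilon^2\,f'(\xi_{\ell_1,\ell_2}),\qquad \xi_{\ell_1,\ell_2}\asymp(\ell_1^2+\ell_2^2)\epsilon^2.
\]
Multiplying by $g(\ell_1\epsilon,\ell_2\epsilon)h(\ell_1 y,\ell_2 z)$, summing, and using $|f'(\xi_{\ell_1,\ell_2})|\lesssim((\ell_1^2+\ell_2^2)\epsilon^2)^{-\alpha-1}$ on $\{\ell_1^2+\ell_2^2\le\epsilon^{-2}\}$, $\lesssim((\ell_1^2+\ell_2^2)\epsilon^2)^{-\alpha-2}$ on the complement, together with $|g(\ell_1\epsilon,\ell_2\epsilon)|\lesssim((\ell_1^2+\ell_2^2)\epsilon^2)^{\gamma}$ resp. $\lesssim1$ on those two ranges (as $g\in\boldsymbol G_\gamma(\mathbb R^2)$) and $|h|\lesssim1$, the error is bounded, uniformly in $y,z$, by
\[
\epsilon^2\biggl(\sum_{\ell_1^2+\ell_2^2\le\epsilon^{-2}}\bigl((\ell_1^2+\ell_2^2)\epsilon^2\bigr)^{\gamma-\alpha-1}+\sum_{\ell_1^2+\ell_2^2>\epsilon^{-2}}\bigl((\ell_1^2+\ell_2^2)\epsilon^2\bigr)^{-\alpha-2}\biggr).
\]
The second sum is $O(1)$ for every $\alpha>0$, while the first equals $\epsilon^{2(\gamma-\alpha)}\sum_{\ell_1^2+\ell_2^2\le\epsilon^{-2}}(\ell_1^2+\ell_2^2)^{-(\alpha+1-\gamma)}$, which by the lattice sums is $O(1)$, $O(\log(1/\epsilon))$ or $O(\epsilon^{2(\gamma-\alpha)})$ according as $\alpha<\gamma$, $\alpha=\gamma$ or $\alpha>\gamma$; this gives \eqref{eq-lem1-1}. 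For \eqref{eq-lem1-2} I would perform the same dyadic splitting on $\sum f(\theta_2\pi^2(\ell_1^2+\ell_2^2)\epsilon^2)g(\ell_1\epsilon,\ell_2\epsilon)h(\ell_1 y,\ell_2 z)$, now estimating $|f|$ instead of $|f'|$, obtaining $O(\epsilon^{-2})$, $O(\epsilon^{-2}\log(1/\epsilon))$ or $O(\epsilon^{2(\gamma-\alpha)})$ according as $\alpha\lessgtr\gamma+1$; adding the error from \eqref{eq-lem1-1} leaves the order unchanged in each case.

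For part (2), set $\phi(x,y)=f(x^2+y^2)g(x,y)=f(x^2+y^2)g_1(x)g_2(y)$, so that the left-hand side of \eqref{eq-lem1-3} is the right-endpoint Riemann sum $\epsilon^2\sum_{\ell_1,\ell_2\ge1}\phi(\ell_1\epsilon,\ell_2\epsilon)$ associated with the partition of $(0,\infty)^2$ into the cells $Q_{\ell_1,\ell_2}=((\ell_1-1)\epsilon,\ell_1\epsilon]\times((\ell_2-1)\epsilon,\ell_2\epsilon]$. The hypotheses $g\in\widetilde{\boldsymbol G}_{\gamma-1,1}(\mathbb R^2)\cap\boldsymbol G^2_\gamma(\mathbb R^2)$ with $\gamma>\alpha\vee 3/2$ ensure that $g_1(0)=g_2(0)=0$ (since $\gamma-1>1/2>0$), hence $\phi$ vanishes on the two coordinate axes; that $\phi\to0$ at infinity; and that $\phi$, $\pd\phi$, $\pd^2\phi$ are integrable on $(0,\infty)^2$ (the origin being controlled by $\gamma>\alpha$, neighbourhoods of the axes by $\gamma>3/2$, and infinity by $\alpha>0$). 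The finitely many cells with $\ell_1^2+\ell_2^2$ below a fixed constant are negligible, since both $\epsilon^2\phi(\ell_1\epsilon,\ell_2\epsilon)$ and $\int_{Q_{\ell_1,\ell_2}}\phi$ are $O(\epsilon^{2+2(\gamma-\alpha)})=O(\epsilon^2)$ there. On the remaining cells I would Taylor-expand $\phi$ to second order about $(\ell_1\epsilon,\ell_2\epsilon)$: the quadratic remainder sums to $O(\epsilon^2\iint|\pd^2\phi|)=O(\epsilon^2)$, and the linear term sums to $\tfrac{\epsilon}{2}\,\epsilon^2\sum_{\ell_1,\ell_2\ge1}(\pd_1\phi+\pd_2\phi)(\ell_1\epsilon,\ell_2\epsilon)$, which is $O(\epsilon^2)$ because $\epsilon^2\sum(\pd_1\phi+\pd_2\phi)(\ell_1\epsilon,\ell_2\epsilon)=\iint_{(0,\infty)^2}(\pd_1\phi+\pd_2\phi)+O(\epsilon)=O(\epsilon)$ by the fundamental theorem of calculus and the vanishing of $\phi$ on the axes. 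Comparing with $\iint_{\mathbb R_+^2}\phi$ completes the proof.

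The main obstacle is the analysis near the origin and along the coordinate axes in part (2): $\phi$ is in general not $C^2$ there, so one must check carefully that the singular factor $|f(x^2+y^2)|\sim(x^2+y^2)^{-\alpha}$ and the boundary vanishing $g_1(x)\sim x^{2(\gamma-1)}$ of $g$ combine so that $\pd^2\phi$ stays integrable — which is exactly where the assumptions $\gamma>\alpha$ and $\gamma>3/2$ are needed — and that the crude near-origin estimate dovetails with the Taylor estimate on the outer cells. Concretely I would prove \eqref{eq-lem1-3} by iterating the one-dimensional right-endpoint estimate — first summing over $\ell_2$ for each fixed $\ell_1$, using $\phi(\ell_1\epsilon,0)=0$, then summing the resulting function of $\ell_1\epsilon$, using $\int_0^\infty\phi(0,y)\,\dd y=0$ — which localises the singularities and keeps the error terms tractable; in the only instance required afterwards (Proposition \ref{prop1}), one has $g(x,y)=(\cos(\pi r x)-1)(\cos(\pi r y)-1)$ up to a constant, for which all of the above bounds are elementary.
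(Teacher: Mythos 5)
Your part (1) is essentially the paper's own argument: the same mean value theorem comparison of $f(\lambda_{\ell_1,\ell_2}\epsilon^2)$ with $f(\theta_2\pi^2(\ell_1^2+\ell_2^2)\epsilon^2)$, the same split of the lattice sum at $\ell_1^2+\ell_2^2\le\epsilon^{-2}$ using the decay of $f$, $f'$ and the growth bound on $g$, and the same two-dimensional lattice-sum asymptotics; the derivation of \eqref{eq-lem1-2} by estimating $|f|$ directly is also identical. Nothing to add there.

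Part (2) is where you genuinely diverge. The paper tiles $\mathbb R_+^2$ by cells \emph{centered} at the lattice points, $E_{\ell_1,\ell_2}=((\ell_1-\tfrac12)\epsilon,(\ell_1+\tfrac12)\epsilon]\times((\ell_2-\tfrac12)\epsilon,(\ell_2+\tfrac12)\epsilon]$, so the first-order Taylor terms integrate to zero over each cell by symmetry and the error is immediately $\lesssim\epsilon^2\iint|\pd^2G|$; the price is the uncovered $\epsilon/2$-strips along the axes, whose contribution $\iint_{\mathbb R_+^2\setminus E}G=O(\epsilon^2)$ is computed explicitly using the product structure $g=g_1g_2$ and the vanishing rates $g_1(s)\lesssim s^{2(\gamma-1)}$, $g_2(s)\lesssim s^2$ — this is exactly where $\gamma>3/2$ is consumed. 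You instead use right-endpoint cells covering all of $(0,\infty)^2$, so there are no leftover strips, but the first-order terms no longer cancel cellwise; you recover $O(\epsilon^2)$ by an Euler--Maclaurin-type argument, summing them to $-\tfrac{\epsilon}{2}\cdot\epsilon^2\sum(\pd_1\phi+\pd_2\phi)(\ell_1\epsilon,\ell_2\epsilon)$ and invoking $\iint(\pd_1\phi+\pd_2\phi)=0$ (valid, since $\phi$ vanishes on the axes and at infinity). Conceptually this works, and it is a legitimately different decomposition. However, the step ``$\epsilon^2\sum(\pd_1\phi+\pd_2\phi)(\ell_1\epsilon,\ell_2\epsilon)=\iint(\pd_1\phi+\pd_2\phi)+O(\epsilon)$'' is itself a Riemann-sum error estimate of exactly the same type as the one you are proving, and you justify it only by the fundamental theorem of calculus, which gives the value of the integral but not the $O(\epsilon)$ discrepancy. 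Making it rigorous requires a telescoping/Taylor argument for $\pd_1\phi$ in the $x$-direction (and symmetrically for $\pd_2\phi$) with second derivatives integrated along the $\epsilon$-strips adjacent to the axes, where the radial bounds of type \eqref{eq-4-2} coming from $\boldsymbol G_\gamma^2$ do not directly apply (there you need control of $g_1''$, $g_2''$ near $0$ at fixed positive distance in the other variable, plus decay along the strip at infinity); this is precisely the near-axis analysis that the paper disposes of via its explicit strip integrals, and it is where $\gamma>3/2$ and the product structure must re-enter your argument. So the work has been displaced into an asserted step rather than eliminated; the plan is salvageable, but as written that step is the gap, and the paper's midpoint-cell choice is the cheaper way to avoid all first-order bookkeeping.
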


\begin{proof}
(1) Since $f \in \boldsymbol F_{0,\alpha}(\mathbb R_{+})$, 
$g \in \boldsymbol G_\gamma(\mathbb R^2)$ 
and $h \in \boldsymbol B_b(\mathbb R^2)$,
it follows from the mean value theorem that
\begin{align*}
&\Biggl|
\sum_{\ell_1,\ell_2\ge1} 
f(\lambda_{\ell_1,\ell_2}\epsilon^2)
g(\ell_1 \epsilon,\ell_2 \epsilon) h(\ell_1 y, \ell_2 z)
-\sum_{\ell_1,\ell_2\ge1} 
f(\theta_2\pi^2(\ell_1^2+\ell_2^2)\epsilon^2)
g(\ell_1 \epsilon,\ell_2 \epsilon) h(\ell_1 y, \ell_2 z)
\Biggr|
\\
&\le
|\theta_2 \Gamma| \epsilon^2 
\sum_{\ell_1,\ell_2\ge1} 
\Biggl|
\int_0^1 f'(\theta_2(\pi^2(\ell_1^2+\ell_2^2)+u\Gamma)\epsilon^2) \dd u
g(\ell_1 \epsilon,\ell_2 \epsilon) h(\ell_1 y, \ell_2 z)
\Biggr|
\\
&\lesssim
\epsilon^2
\sum_{\ell_1,\ell_2\ge1} 
|f'(\theta_2\pi^2(\ell_1^2+\ell_2^2)\epsilon^2)|
|g(\ell_1 \epsilon,\ell_2 \epsilon)|
\\
&\lesssim
\epsilon^{2(\gamma-\alpha)} 
\sum_{1 \le \ell_1^2+\ell_2^2 \le 1/\epsilon^2} 
\frac{1}{(\ell_1^2+\ell_2^2)^{1+\alpha-\gamma}}
+\frac{1}{\epsilon^{2(1+\alpha)}} 
\sum_{\ell_1^2+\ell_2^2 > 1/\epsilon^2} 
\frac{1}{(\ell_1^2+\ell_2^2)^{2+\alpha}}
\\
&=
\begin{cases}
O(1), & \alpha<\gamma,
\\
O(-\log \epsilon), & \alpha=\gamma,
\\
O(\epsilon^{2(\gamma-\alpha)}), & \alpha>\gamma.
\end{cases}
\end{align*}

Similarly, it holds that
\begin{align}
&\sum_{\ell_1,\ell_2\ge1} 
f(\theta_2\pi^2(\ell_1^2+\ell_2^2)\epsilon^2) 
g(\ell_1 \epsilon,\ell_2 \epsilon) h(\ell_1 y, \ell_2 z)
\nonumber
\\
&\lesssim
\epsilon^{ 2(\gamma-\alpha)} 
\sum_{\ell_1^2+\ell_2^2 \le 1/\epsilon^2} 
\frac{1}{(\ell_1^2+\ell_2^2)^{\alpha-\gamma}}
+\frac{1}{\epsilon^{2(1+\alpha)}} 
\sum_{\ell_1^2+\ell_2^2 > 1/\epsilon^2} 
\frac{1}{(\ell_1^2+\ell_2^2)^{1+\alpha}}
\nonumber
\\
&=
\begin{cases}
O(\epsilon^{-2}), & \alpha<\gamma+1,
\\
O(-\epsilon^{-2}\log \epsilon), & \alpha=\gamma+1,
\\
O(\epsilon^{2(\gamma-\alpha)}), & \alpha>\gamma+1.
\end{cases}
\label{eq-4-1}
\end{align}
Therefore, \eqref{eq-lem1-1} and \eqref{eq-4-1} yield \eqref{eq-lem1-2}.

(2) For $\ell_1,\ell_2\ge1$, we set 
$E_{\ell_1,\ell_2}
=((\ell_1-1/2)\epsilon, (\ell_1+1/2)\epsilon]
\times ((\ell_2-1/2)\epsilon,(\ell_2+1/2)\epsilon]\subset \mathbb R_{+}^2$
and $E=\bigcup_{\ell_1,\ell_2\ge1}E_{\ell_1,\ell_2}$.
Since $f \in \boldsymbol F_{0,\alpha}(\mathbb R_{+})$ and
$g \in \boldsymbol G_\gamma^2(\mathbb R^2)$,
$G(x,y) = f(x^2+y^2) g(x,y)$ can be controlled as follows. 
\begin{equation}\label{eq-4-2}
|\pd^j G(x,y)| \lesssim 
\begin{cases}
(x^2+y^2)^{\gamma-\alpha-j/2},
\quad (x, y \to 0),
\\
\frac{1}{(x^2+y^2)^{1+\alpha+j/2}},
\quad (x, y \to \infty)
\end{cases}
\end{equation}
for $j=0,1,2$. 
Notice that $|\pd^2 G| \in L^1(\mathbb R_{+}^2)$ if $\gamma > \alpha$.
Therefore, it holds from the Taylor expansion that
\begin{align}
\Biggl|
\epsilon^2 \sum_{\ell_1,\ell_2\ge1}
G(\ell_1 \epsilon, \ell_2 \epsilon) -\iint_{E} G(x,y) \dd x \dd y
\Biggr|
&=
\Biggl|
\sum_{\ell_1,\ell_2\ge1}
\iint_{E_{\ell_1,\ell_2}}
(G(\ell_1 \epsilon,\ell_2\epsilon)-G(x,y)) \dd x \dd y
\Biggr|
\nonumber
\\
&\lesssim
\epsilon^2 \iint_E |\partial^2 G(x,y)|\dd x \dd y
\nonumber
\\
&\lesssim \epsilon^2.
\label{eq-4-3}
\end{align}
Since $g \in \widetilde {\boldsymbol G}_{\gamma-1,1}(\mathbb R^2)$,
if $\gamma > \alpha \lor 3/2$, then there exist $g_1,g_2 \in \boldsymbol C(\mathbb R)$
such that $g(x,y) = g_1(x)g_2(y)$, and it holds that
\begin{align*}
\int_0^{\epsilon/2}\int_0^{\epsilon/2}
G(x,y)\dd x \dd y
&=O\Biggl(
\int_0^{\epsilon/2}\int_0^{\epsilon/2}
(x^2+y^2)^{\gamma-\alpha} \dd x \dd y
\Biggr)
\\
&=O\Biggl(
\int_0^{\epsilon} r^{2\gamma-2\alpha+1} \dd r
\Biggr)
=O(\epsilon^{2(\gamma-\alpha+1)})
=O(\epsilon^2),
\\
\int_{\epsilon/2}^\infty \int_0^{\epsilon/2}
G(x,y)\dd x \dd y
&\lesssim 
\int_0^{\epsilon/2} x^{2(\gamma-1)} \dd x
\int_{\epsilon/2}^\infty |f(\theta_2 \pi^2 y^2)| |g_2(y)| \dd y
\\
&=O\Biggl(
\epsilon^{2\gamma-1}
\biggl(
\int_{\epsilon}^1 
y^{2-2\alpha}\dd y
+\int_1^\infty 
\frac{\dd y}{y^{2+2\alpha}}
\biggr)
\Biggr)
=O(\epsilon^2),
\\
\int_0^{\epsilon/2}\int_{\epsilon/2}^\infty 
G(x,y)\dd x \dd y
&\lesssim 
\int_{\epsilon/2}^\infty |f(\theta_2 \pi^2 x^2)| |g_1(x)| \dd x
\int_0^{\epsilon/2} y^2 \dd y
\\
&=O\Biggl(
\epsilon^3
\biggl(
\int_{\epsilon}^1 
x^{2(\gamma-\alpha-1)}\dd x
+\int_1^\infty 
\frac{\dd x}{x^{2+2\alpha}}
\biggr)
\Biggr)
=O(\epsilon^2),
\end{align*}
which yield
\begin{equation}\label{eq-4-4}
\iint_{\mathbb R_{+}^2\setminus E} G(x,y) \dd x \dd y = O(\epsilon^2).
\end{equation}
Therefore, \eqref{eq-4-3} and \eqref{eq-4-4} imply \eqref{eq-lem1-3}.
\end{proof}

The following lemma is useful for proving the theorems in Subsection \ref{sec2-3}.
\begin{lem}\label{lem13}
Let $\alpha \in (0,2)$ and $f \in \boldsymbol C^2(\mathbb R_{+})$ such that
$f(x) = f_1(x) f_2(x)$, 
\begin{equation*}
f_1(x) \lesssim 1 \ (x \to 0), 
\quad
x f_1(x) \lesssim 1 \ (x \to \infty), 
\quad
f_2(x) = x^{-\alpha}.
\end{equation*}
For $\gamma \ge 0$, $g \in \boldsymbol G_\gamma(\mathbb R^2)$ 
and $h \in \boldsymbol B_b(\mathbb R^2)$, 
it holds that as $\epsilon \to 0$, 
\begin{align*}
&\sum_{\ell_1,\ell_2\ge1} 
f_1(\lambda_{\ell_1,\ell_2}\epsilon^2) f_2(\theta_2 \mu_{\ell_1,\ell_2}\epsilon^2) 
g(\ell_1 \epsilon,\ell_2 \epsilon) h(\ell_1 y, \ell_2 z)
\nonumber
\\
&=
\sum_{\ell_1,\ell_2\ge1} 
f(\lambda_{\ell_1,\ell_2}\epsilon^2)
g(\ell_1 \epsilon,\ell_2 \epsilon) h(\ell_1 y, \ell_2 z)
+
\begin{cases}
O(1), & \alpha < \gamma,
\\
O(-\log \epsilon), & \alpha = \gamma,
\\
O(\epsilon^{2(\gamma-\alpha)}), & \alpha > \gamma
\end{cases}
\end{align*}
uniformly in $y,z\in\mathbb R$. 
\end{lem}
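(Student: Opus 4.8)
The plan is to reduce the assertion to a comparison of the single factor $f_2$ at two nearby arguments and then to apply the elementary two-dimensional lattice-sum estimates recorded just before Lemma \ref{lem1}, in the same spirit as the proof of Lemma \ref{lem1}(1). First I would write the left-hand side minus the first sum on the right-hand side, using $f=f_1f_2$, as
\begin{equation*}
\sum_{\ell_1,\ell_2\ge1} f_1(\lambda_{\ell_1,\ell_2}\epsilon^2)
\bigl(f_2(\theta_2\mu_{\ell_1,\ell_2}\epsilon^2)-f_2(\lambda_{\ell_1,\ell_2}\epsilon^2)\bigr)
g(\ell_1\epsilon,\ell_2\epsilon) h(\ell_1 y,\ell_2 z),
\end{equation*}
each series being absolutely convergent for fixed $\epsilon>0$ by the decay $xf_1(x)\lesssim1$ as $x\to\infty$. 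Since $\theta_2\mu_{\ell_1,\ell_2}-\lambda_{\ell_1,\ell_2}=\theta_2(\mu_0-\Gamma)$ does not depend on $\ell_1,\ell_2$, the mean value theorem applied to $f_2(x)=x^{-\alpha}$ gives
\begin{equation*}
f_2(\theta_2\mu_{\ell_1,\ell_2}\epsilon^2)-f_2(\lambda_{\ell_1,\ell_2}\epsilon^2)
=-\alpha\,\xi_{\ell_1,\ell_2}^{-\alpha-1}\,\theta_2(\mu_0-\Gamma)\epsilon^2
\end{equation*}
for some $\xi_{\ell_1,\ell_2}$ between $\theta_2\mu_{\ell_1,\ell_2}\epsilon^2$ and $\lambda_{\ell_1,\ell_2}\epsilon^2$.

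Next I would bound $\xi_{\ell_1,\ell_2}$ from below: since $\mu_0>-2\pi^2$ and $\lambda_{1,1}^*>0$ (so $\Gamma>-2\pi^2$), both $\mu_{\ell_1,\ell_2}=\pi^2(\ell_1^2+\ell_2^2)+\mu_0$ and $\pi^2(\ell_1^2+\ell_2^2)+\Gamma$ are $\gtrsim\ell_1^2+\ell_2^2$ uniformly in $\ell_1,\ell_2\ge1$, whence $\xi_{\ell_1,\ell_2}\gtrsim\epsilon^2(\ell_1^2+\ell_2^2)$ and
\begin{equation*}
\bigl|f_2(\theta_2\mu_{\ell_1,\ell_2}\epsilon^2)-f_2(\lambda_{\ell_1,\ell_2}\epsilon^2)\bigr|
\lesssim\epsilon^{-2\alpha}(\ell_1^2+\ell_2^2)^{-\alpha-1}.
\end{equation*}
Combining this with $|f_1|\lesssim1$ on $(0,\infty)$ (a consequence of $f_1\in\boldsymbol C^2$, $f_1(x)\lesssim1$ as $x\to0$ and $xf_1(x)\lesssim1$ as $x\to\infty$), with $|h|\lesssim1$, and with $|g(\ell_1\epsilon,\ell_2\epsilon)|\lesssim\min\bigl(\epsilon^{2\gamma}(\ell_1^2+\ell_2^2)^\gamma,\,1\bigr)$ obtained from $g\in\boldsymbol G_\gamma(\mathbb R^2)$, the displayed difference is dominated, uniformly in $y,z\in\mathbb R$, by
\begin{equation*}
\epsilon^{-2\alpha}\Biggl(\epsilon^{2\gamma}\sum_{1\le\ell_1^2+\ell_2^2\le1/\epsilon^2}(\ell_1^2+\ell_2^2)^{\gamma-\alpha-1}
+\sum_{\ell_1^2+\ell_2^2>1/\epsilon^2}(\ell_1^2+\ell_2^2)^{-\alpha-1}\Biggr).
\end{equation*}

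Finally I would invoke the elementary estimates for $\sum(\ell_1^2+\ell_2^2)^{-\beta}$ quoted before Lemma \ref{lem1}. The tail sum has $\beta=\alpha+1>1$ and is $O(\epsilon^{2\alpha})$, so the second term contributes $O(1)$; the partial sum has $\beta=\alpha+1-\gamma$ and equals $O(\epsilon^{-2(\gamma-\alpha)})$, $O(-\log\epsilon)$, or $O(1)$ according as $\gamma>\alpha$, $\gamma=\alpha$, or $\gamma<\alpha$, so after multiplying by $\epsilon^{2\gamma-2\alpha}$ the first term contributes $O(1)$, $O(-\log\epsilon)$, or $O(\epsilon^{2(\gamma-\alpha)})$ respectively, which matches the three cases in the statement. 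The computation is routine; the only step needing a little care is the lower bound $\xi_{\ell_1,\ell_2}\gtrsim\epsilon^2(\ell_1^2+\ell_2^2)$, i.e. checking that the additive constants $\mu_0$ and $\Gamma$ do not degrade the quadratic growth, which is exactly where the standing positivity assumptions are used.
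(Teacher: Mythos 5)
Your argument is correct and is essentially the paper's own proof: both exploit that $\theta_2\mu_{\ell_1,\ell_2}-\lambda_{\ell_1,\ell_2}=\theta_2(\mu_0-\Gamma)$ is constant, apply the mean value theorem to $f_2$ to produce the factor $\epsilon^2$, and then split the lattice sum at $\ell_1^2+\ell_2^2=1/\epsilon^2$ using $|g(\ell_1\epsilon,\ell_2\epsilon)|\lesssim\min\bigl(\epsilon^{2\gamma}(\ell_1^2+\ell_2^2)^{\gamma},1\bigr)$ and the quoted elementary estimates. The only cosmetic difference is that the paper bounds the combined product via $x^{\alpha+2}f_1(x)f_2'(x)\lesssim 1$ at infinity, whereas you use only the global bound $|f_1|\lesssim 1$; your cruder tail bound $\epsilon^{-2\alpha}\sum_{\ell_1^2+\ell_2^2>1/\epsilon^2}(\ell_1^2+\ell_2^2)^{-\alpha-1}=O(1)$ is still summable, so the conclusion is unaffected.
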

\begin{proof}
Let $\widetilde \Gamma = \mu_0 -\Gamma$.
Since 
\begin{equation*}
x^{\alpha +1} f_1(x) f_2'(x) \lesssim 1 \ (x \to 0), 
\quad
x^{\alpha +2} f_1(x) f_2'(x) \lesssim 1 \ (x \to \infty), 
\end{equation*}
$g \in \boldsymbol G_\gamma(\mathbb R^2)$ and $h \in \boldsymbol B_b(\mathbb R^2)$,
we see from the mean value theorem that
\begin{align*}
&\Biggl|
\sum_{\ell_1,\ell_2\ge1} 
f_1(\lambda_{\ell_1,\ell_2}\epsilon^2) f_2(\theta_2 \mu_{\ell_1,\ell_2}\epsilon^2) 
g(\ell_1 \epsilon,\ell_2 \epsilon) h(\ell_1 y, \ell_2 z)
-\sum_{\ell_1,\ell_2\ge1} 
f(\lambda_{\ell_1,\ell_2}\epsilon^2)
g(\ell_1 \epsilon,\ell_2 \epsilon) h(\ell_1 y, \ell_2 z)
\Biggr|
\\
&\le
|\theta_2 \widetilde \Gamma| \epsilon^2 
\sum_{\ell_1,\ell_2\ge1} 
\Biggl|
f_1(\lambda_{\ell_1,\ell_2}\epsilon^2) 
\int_0^1 f_2'(( \lambda_{\ell_1, \ell_2} +u \theta_2 \widetilde \Gamma)\epsilon^2) 
\dd u g(\ell_1 \epsilon,\ell_2 \epsilon) h(\ell_1 y, \ell_2 z)
\Biggr|
\\
&\lesssim
\epsilon^2
\sum_{\ell_1,\ell_2\ge1} 
|f_1(\theta_2\pi^2(\ell_1^2+\ell_2^2)\epsilon^2)
f_2'(\theta_2\pi^2(\ell_1^2+\ell_2^2)\epsilon^2)|
|g(\ell_1 \epsilon,\ell_2 \epsilon)|
\\
&\lesssim
\epsilon^{2(\gamma-\alpha)} 
\sum_{1 \le \ell_1^2+\ell_2^2 \le 1/\epsilon^2} 
\frac{1}{(\ell_1^2+\ell_2^2)^{1+\alpha-\gamma}}
+\frac{1}{\epsilon^{2(1+\alpha)}} 
\sum_{\ell_1^2+\ell_2^2 > 1/\epsilon^2} 
\frac{1}{(\ell_1^2+\ell_2^2)^{2+\alpha}}
\\
&=
\begin{cases}
O(1), & \alpha<\gamma,
\\
O(-\log \epsilon), & \alpha=\gamma,
\\
O(\epsilon^{2(\gamma-\alpha)}), & \alpha>\gamma.
\end{cases}
\end{align*}
\end{proof}

The following Lemmas \ref{lem2} and \ref{lem3} are assertions 
under more relaxed conditions than those in Lemmas A.7 and A.8 
in Hildebrandt and Trabs \cite{Hildebrandt_Trabs2021},
and Lemma \ref{lem4} is a version of two variables of Lemma \ref{lem2}.

\begin{lem}\label{lem2}
Let $\{a_{\ell}\}_{\ell=1}^L$ be a real sequence and $\tau \in \{\sin, \cos\}$.
Then, it holds that
\begin{equation*}
\Biggl|
\sum_{\ell=1}^L a_{\ell}\tau(\pi \ell y)
\Biggr|
\le \frac{K_L}{y \land (2-y)}
\end{equation*}
for any $y \in (0,2)$, where 
$K_L = |a_1|+\sum_{\ell=2}^L |a_\ell-a_{\ell-1}|$.
\end{lem}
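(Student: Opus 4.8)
The plan is to prove Lemma \ref{lem2} by summation by parts (Abel's rearrangement), reducing the claim to two elementary estimates: a uniform bound on consecutive-block partial sums of the pure trigonometric series, and the inequality $\sin(\pi y/2)\ge y\wedge(2-y)$ on $(0,2)$.

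First I would set $a_0=0$, write $b_\ell=\tau(\pi\ell y)$ and $B_\ell=\sum_{k=1}^\ell b_k$ (with $B_0=0$), and use $a_\ell=\sum_{j=1}^\ell(a_j-a_{j-1})$. Interchanging the order of summation gives the identity
\[
\sum_{\ell=1}^L a_\ell b_\ell=\sum_{j=1}^L(a_j-a_{j-1})\sum_{\ell=j}^L b_\ell=\sum_{j=1}^L(a_j-a_{j-1})(B_L-B_{j-1}).
\]
Taking absolute values and pulling out $\max_{1\le j\le L}|B_L-B_{j-1}|$ leaves exactly $\sum_{j=1}^L|a_j-a_{j-1}|=|a_1|+\sum_{\ell=2}^L|a_\ell-a_{\ell-1}|=K_L$. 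The point of summing the \emph{forward} differences $a_j-a_{j-1}$ against the \emph{tail} blocks $B_L-B_{j-1}$ (rather than the more naive rearrangement $a_LB_L+\sum(a_\ell-a_{\ell+1})B_\ell$) is that it produces the constant $1$ in front of $K_L$ instead of $2$; this is really the only place where one has to be a little careful, and it is the main (mild) obstacle in the argument.

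Next I would bound the block sums. For any $1\le m\le n$, writing $\tau(\pi k y)$ as the real or imaginary part of $e^{\iu\pi k y}$ and summing the geometric series gives, for $y\in(0,2)$ (so that $e^{\iu\pi y}\neq1$),
\[
\Bigl|\sum_{k=m}^n e^{\iu\pi k y}\Bigr|=\frac{|e^{\iu\pi(n-m+1)y}-1|}{|e^{\iu\pi y}-1|}\le\frac{2}{|e^{\iu\pi y}-1|}=\frac{1}{|\sin(\pi y/2)|},
\]
since $|e^{\iu\pi y}-1|=2|\sin(\pi y/2)|$. Hence $|B_L-B_{j-1}|\le1/\sin(\pi y/2)$ for every $j$ (including $j=1$, where the block is $B_L$ itself), using that $\sin(\pi y/2)>0$ on $(0,2)$. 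Combining with the previous step yields
\[
\Bigl|\sum_{\ell=1}^L a_\ell\tau(\pi\ell y)\Bigr|\le\frac{K_L}{\sin(\pi y/2)}.
\]

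Finally I would pass from $\sin(\pi y/2)$ to $y\wedge(2-y)$. On $[0,1]$ the function $y\mapsto\sin(\pi y/2)$ is concave (its second derivative is $-(\pi/2)^2\sin(\pi y/2)\le0$) and coincides with the line $y\mapsto y$ at the endpoints $0$ and $1$, hence $\sin(\pi y/2)\ge y$ there; the substitution $y\mapsto2-y$ together with $\sin(\pi-\cdot)=\sin(\cdot)$ gives $\sin(\pi y/2)\ge2-y$ on $[1,2]$. Therefore $\sin(\pi y/2)\ge y\wedge(2-y)$ on $(0,2)$, and the claimed bound follows. There is no deeper difficulty here; beyond the choice of Abel rearrangement noted above, one only needs to check that this elementary $\sin$ inequality has no edge issues at $y=1$ or as $y\to0,2$, which the concavity argument handles directly.
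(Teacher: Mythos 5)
Your proof is correct and follows essentially the same route as the paper, which simply cites Lemma A.7 of Hildebrandt and Trabs and, in its own two-variable analogue (Lemma \ref{lem4}), uses exactly your Abel-summation-with-tail-blocks decomposition together with the bound $\bigl|\sum_{\ell=L_1}^{L_2}\tau(\pi\ell y)\bigr|\le 1/\sin(\pi y/2)\le 1/(y\wedge(2-y))$. Your explicit geometric-series and concavity arguments just fill in the steps the paper leaves to the cited reference.
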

\begin{proof}
See the proof of Lemma A.7 in Hildebrandt and Trabs \cite{Hildebrandt_Trabs2021}.
\end{proof}

\begin{lem}\label{lem3}
Let $G:\mathbb R \to \mathbb R$ be a function and define
\begin{equation*}
K(\epsilon)=
|D_{\epsilon} G(\epsilon)|
+\sum_{\ell \ge 1} |D_{\epsilon}^2 G(\ell \epsilon)|.
\end{equation*}
Then, for $y\in(0,2)$, as $\epsilon \to 0$, 
\begin{align*}
\sum_{\ell \ge 1} G(\ell \epsilon) \cos(\pi \ell y)
&=-\frac{G(\epsilon)}{2} 
+O\biggl( \frac{K(\epsilon)}{(y \land (2-y))^2} \biggr),
\\
\sum_{\ell \ge 1} G(\ell \epsilon) \sin(\pi \ell y)
&=\frac{G(\epsilon)}{2\tan(\frac{\pi y}{2})}
+O\biggl( \frac{K(\epsilon)}{(y \land (2-y))^2} \biggr).
\end{align*}
\end{lem}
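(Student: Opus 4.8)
The plan is to prove both identities simultaneously by applying Abel summation (summation by parts) twice to $\sum_{\ell\ge1}G(\ell\epsilon)\tau(\pi\ell y)$, where $\tau\in\{\cos,\sin\}$, thereby transferring the two difference operations onto $G$ and reducing the series to an absolutely convergent one tested against the second differences $D_\epsilon^2 G(\ell\epsilon)$, whose total mass is exactly the $K(\epsilon)$ in the statement. The constant main term $-G(\epsilon)/2$, respectively $G(\epsilon)/(2\tan(\pi y/2))$, will emerge as the (constant) mean of the trigonometric partial sums multiplied by $G(\epsilon)$, and everything else will be absorbed into the remainder.

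First I would record the closed forms of the partial sums. Writing $S_L:=\sum_{\ell=1}^L\tau(\pi\ell y)$ and summing the geometric series $\sum_{\ell=0}^L\ee^{\iu\pi\ell y}=(\ee^{\iu\pi(L+1)y}-1)/(\ee^{\iu\pi y}-1)$, one gets
\begin{equation*}
\sum_{\ell=1}^L\cos(\pi\ell y)=-\frac12+\frac{\sin((L+\tfrac12)\pi y)}{2\sin(\pi y/2)},
\qquad
\sum_{\ell=1}^L\sin(\pi\ell y)=\frac{1}{2\tan(\pi y/2)}-\frac{\cos((L+\tfrac12)\pi y)}{2\sin(\pi y/2)}.
\end{equation*}
Thus $S_L=m_\tau+T_L$ with $m_{\cos}=-\tfrac12$, $m_{\sin}=1/(2\tan(\pi y/2))$ — the prefactors of $G(\epsilon)$ in the claim — and, since $\sin(\pi y/2)\ge y\wedge(2-y)$ on $(0,2)$, $|T_L|\le\frac{1}{2\sin(\pi y/2)}\lesssim\frac{1}{y\wedge(2-y)}$ uniformly in $L$. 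Next I would estimate the iterated partial sums $U_k:=\sum_{\ell=1}^k T_\ell$: because $T_\ell$ is a constant multiple of $\sin((\ell+\tfrac12)\pi y)$ (resp. $\cos((\ell+\tfrac12)\pi y)$), summing once more and using the same geometric bound gives $|U_k|\le\frac{1}{2\sin^2(\pi y/2)}\lesssim\frac{1}{(y\wedge(2-y))^2}$ uniformly in $k$; alternatively this follows from Lemma \ref{lem2} applied to the sequence $(T_\ell)$.

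For a finite cutoff $L$, with $S_0=U_0=0$, two applications of Abel summation together with the telescoping identity $\sum_{\ell=1}^{L-1}D_\epsilon G(\ell\epsilon)=G(L\epsilon)-G(\epsilon)$ yield
\begin{align*}
\sum_{\ell=1}^L G(\ell\epsilon)\tau(\pi\ell y)
&=G(L\epsilon)S_L-\sum_{\ell=1}^{L-1}S_\ell\,D_\epsilon G(\ell\epsilon)\\
&=m_\tau G(\epsilon)+G(L\epsilon)T_L-\sum_{\ell=1}^{L-1}T_\ell\,D_\epsilon G(\ell\epsilon)\\
&=m_\tau G(\epsilon)+G(L\epsilon)T_L-U_{L-1}\,D_\epsilon G((L-1)\epsilon)+\sum_{\ell=1}^{L-2}U_\ell\,D_\epsilon^2 G(\ell\epsilon).
\end{align*}
Letting $L\to\infty$, the last sum converges absolutely, being dominated by $\sup_k|U_k|\sum_{\ell\ge1}|D_\epsilon^2 G(\ell\epsilon)|\le\frac{K(\epsilon)}{2(y\wedge(2-y))^2}$, which is exactly the claimed remainder order; once the boundary terms $G(L\epsilon)T_L$ and $U_{L-1}D_\epsilon G((L-1)\epsilon)$ are shown to vanish, one is left with $\sum_{\ell\ge1}G(\ell\epsilon)\tau(\pi\ell y)=m_\tau G(\epsilon)+O\!\big(K(\epsilon)/(y\wedge(2-y))^2\big)$, i.e. the assertion for both $\tau=\cos$ and $\tau=\sin$.

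The main obstacle is the honest disposal of these boundary terms as $L\to\infty$: finiteness of $K(\epsilon)$ forces $D_\epsilon G(\ell\epsilon)$ to converge, and, combined with the decay of $G$ and of $D_\epsilon G$ at infinity available in every application (functions of class $\boldsymbol F_{0,\alpha}(\mathbb R_+)$ and their analogues), makes $G(L\epsilon)T_L$ and $D_\epsilon G((L-1)\epsilon)U_{L-1}$ tend to $0$; this is precisely the point at which the present hypotheses are weaker than, and must substitute for, the pointwise smoothness assumptions used in Lemma A.8 of Hildebrandt and Trabs \cite{Hildebrandt_Trabs2021}. The remaining steps are routine geometric-sum and telescoping bookkeeping.
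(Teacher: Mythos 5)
Your proof is correct and essentially the paper's argument: the paper performs one summation by parts (following Lemma A.8 of Hildebrandt and Trabs) to extract the main terms $-G(\epsilon)/2$ and $G(\epsilon)/(2\tan(\frac{\pi y}{2}))$ and then applies Lemma \ref{lem2} to the sequence $D_\epsilon G(\ell\epsilon)$, whose constant $|D_\epsilon G(\epsilon)|+\sum_{\ell\ge1}|D_\epsilon^2 G(\ell\epsilon)|$ is exactly $K(\epsilon)$, so your explicit double Abel summation with the closed-form Dirichlet-kernel partial sums is just an unpacking of that same mechanism. The boundary terms you flag are treated no more explicitly in the paper's proof (the lemma's stated hypotheses omit the decay of $G$ and $D_\epsilon G$ at infinity that disposes of them, and this decay holds in every application), so this is a shared implicit assumption rather than a gap relative to the paper.
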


\begin{proof}
It can be calculated that 
\begin{equation*}
\sum_{\ell \ge 1} G(\ell \epsilon) \cos(\pi \ell y)
=-\frac{G(\epsilon)}{2}
-\frac{1}{2\sin(\frac{\pi y}{2})}
\sum_{\ell \ge 1}
D_\epsilon G(\ell \epsilon)
\sin\Bigl(\pi \Bigl(\ell+\frac{1}{2}\Bigr)y\Bigr)
\end{equation*}
in the same way as the proof of Lemma A.8 in 
Hildebrandt and Trabs \cite{Hildebrandt_Trabs2021}.
By applying Lemma \ref{lem2}, it holds that
\begin{equation*}
\Biggl|
\sum_{\ell \ge 1}
D_\epsilon G(\ell \epsilon)
\sin\Bigl(\pi \Bigl(\ell+\frac{1}{2}\Bigr)y\Bigr)
\Biggr|
\le \frac{K(\epsilon)}{y \land (2-y)},
\end{equation*}
and thus the first statement is obtained.
Similarly, the second statement can be proved.
\end{proof}

\begin{lem}\label{lem4}
Let $\{a_{\ell_1,\ell_2}\}_{\ell_1,\ell_2=1}^L$ be a real double sequence and 
$\tau_1, \tau_2 \in \{\sin, \cos\}$. Then,
it holds that
\begin{equation*}
\Biggl|
\sum_{\ell_1,\ell_2=1}^L a_{\ell_1,\ell_2}\tau_1(\pi \ell_1 y)\tau_2(\pi \ell_2 z)
\Biggr|
\le
\frac{K_L}{(y \land (2-y))(z \land (2-z))}
\end{equation*}
for any $y,z\in(0,2)$, where 
\begin{align*}
K_L &= |a_{1,1}| + \sum_{\ell_1=2}^L |a_{\ell_1,1}-a_{\ell_1-1,1}|
+\sum_{\ell_2=2}^L |a_{1,\ell_2}-a_{1,\ell_2-1}|
\\
&\qquad
+\sum_{\ell_1,\ell_2=2}^L |a_{\ell_1,\ell_2}
-a_{\ell_1-1,\ell_2}-a_{\ell_1,\ell_2-1}+a_{\ell_1-1,\ell_2-1}|.
\end{align*}
\end{lem}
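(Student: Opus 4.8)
The plan is to derive Lemma \ref{lem4} from two successive applications of Lemma \ref{lem2}, peeling off one summation index at a time. First I fix $y\in(0,2)$ and introduce the coefficients $c_{\ell_2}(y)=\sum_{\ell_1=1}^L a_{\ell_1,\ell_2}\tau_1(\pi\ell_1 y)$ for $\ell_2=1,\ldots,L$, so that the double sum in question is $\sum_{\ell_2=1}^L c_{\ell_2}(y)\,\tau_2(\pi\ell_2 z)$. Applying Lemma \ref{lem2} in the variable $\ell_2$ (with the real sequence $\{c_{\ell_2}(y)\}_{\ell_2=1}^L$ and $\tau=\tau_2$) gives, for every $z\in(0,2)$,
\[
\Biggl|\sum_{\ell_2=1}^L c_{\ell_2}(y)\,\tau_2(\pi\ell_2 z)\Biggr|
\le\frac{1}{z\land(2-z)}\Biggl(|c_1(y)|+\sum_{\ell_2=2}^L\bigl|c_{\ell_2}(y)-c_{\ell_2-1}(y)\bigr|\Biggr).
\]

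Next I bound each term on the right by a further application of Lemma \ref{lem2}, now in the variable $\ell_1$. Since $c_1(y)=\sum_{\ell_1=1}^L a_{\ell_1,1}\tau_1(\pi\ell_1 y)$, Lemma \ref{lem2} yields $|c_1(y)|\le(y\land(2-y))^{-1}\bigl(|a_{1,1}|+\sum_{\ell_1=2}^L|a_{\ell_1,1}-a_{\ell_1-1,1}|\bigr)$. Likewise, writing $c_{\ell_2}(y)-c_{\ell_2-1}(y)=\sum_{\ell_1=1}^L(a_{\ell_1,\ell_2}-a_{\ell_1,\ell_2-1})\tau_1(\pi\ell_1 y)$ and applying Lemma \ref{lem2} to the sequence $\{a_{\ell_1,\ell_2}-a_{\ell_1,\ell_2-1}\}_{\ell_1=1}^L$ gives
\[
\bigl|c_{\ell_2}(y)-c_{\ell_2-1}(y)\bigr|
\le\frac{1}{y\land(2-y)}\Biggl(|a_{1,\ell_2}-a_{1,\ell_2-1}|+\sum_{\ell_1=2}^L\bigl|a_{\ell_1,\ell_2}-a_{\ell_1-1,\ell_2}-a_{\ell_1,\ell_2-1}+a_{\ell_1-1,\ell_2-1}\bigr|\Biggr)
\]
for $\ell_2=2,\ldots,L$. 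Substituting these two estimates into the previous display and summing over $\ell_2=2,\ldots,L$, the four resulting families of terms are precisely the corner term $|a_{1,1}|$, the two families of first differences along the edges $\ell_1=1$ and $\ell_2=1$, and the family of mixed second differences over the interior; their total is exactly $K_L$. This produces the claimed bound $\frac{K_L}{(y\land(2-y))(z\land(2-z))}$.

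I expect no genuine obstacle here: the content is entirely bookkeeping of the four families of differences and a check that the bound supplied by Lemma \ref{lem2} is uniform in the frozen variable, so that fixing $y$ while summing in $\ell_2$ is legitimate and the final inequality holds for all $y,z\in(0,2)$ at once. (One could alternatively run a two-dimensional Abel summation against the partial sums $S_{k_1,k_2}(y,z)=\bigl(\sum_{\ell_1=1}^{k_1}\tau_1(\pi\ell_1 y)\bigr)\bigl(\sum_{\ell_2=1}^{k_2}\tau_2(\pi\ell_2 z)\bigr)$, each factor of which is controlled by the one-variable estimate of Lemma \ref{lem2} applied to an indicator sequence; but the iterated application above is the most economical route and the one I would present.)
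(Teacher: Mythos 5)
Your proof is correct: each application of Lemma \ref{lem2} is to a genuine real sequence (the coefficients $c_{\ell_2}(y)$ for fixed $y$, then the rows $\{a_{\ell_1,1}\}$ and the column differences $\{a_{\ell_1,\ell_2}-a_{\ell_1,\ell_2-1}\}$), the two factors $(y\land(2-y))^{-1}$ and $(z\land(2-z))^{-1}$ come out separately, and the bookkeeping reproduces exactly the constant $K_L$ of the statement. The route differs from the paper's in structure, though not in spirit. The paper does not iterate Lemma \ref{lem2}; instead it performs an explicit two-dimensional summation by parts, writing the double sum as a combination of the corner coefficient $a_{1,1}$, the edge first differences, and the interior mixed second differences, each multiplied by a rectangular tail sum $\sum_{\ell_1\ge m}\sum_{\ell_2\ge k}\tau_1(\pi\ell_1 y)\tau_2(\pi\ell_2 z)$, and then bounds each one-dimensional tail $\bigl|\sum_{\ell=L_1}^{L_2}\tau(\pi\ell y)\bigr|\le (y\land(2-y))^{-1}$ uniformly in $L_1\le L_2$ via Lagrange's trigonometric identities — essentially re-running in two dimensions the mechanism underlying Lemma \ref{lem2} rather than invoking its statement. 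Your tensorized iteration is the more economical presentation, since it uses Lemma \ref{lem2} purely as a black box and avoids both the 2D Abel decomposition and the explicit appeal to Lagrange's identities; the paper's version has the minor advantage of exhibiting the partial-sum bound uniformly over rectangles, which makes the provenance of the four families of differences transparent, but the resulting constant and conclusion are identical. Your parenthetical alternative is in fact the paper's argument.
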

\begin{proof}
Since it can be decomposed as
\begin{equation*}
\sum_{\ell=1}^L b_\ell \tau_1(\pi \ell y)
=b_1 \sum_{\ell=1}^L \tau_1(\pi \ell y)
+\sum_{k=2}^L (b_k-b_{k-1})\sum_{\ell=k}^L \tau_1(\pi \ell y),
\end{equation*}
it follows that
\begin{align*}
&\sum_{\ell_1,\ell_2=1}^L a_{\ell_1,\ell_2}\tau_1(\pi \ell_1 y)\tau_2(\pi \ell_2 z)
\\
&= a_{1,1}\sum_{\ell_1,\ell_2=1}^L \tau_1(\pi \ell_1 y)\tau_2(\pi \ell_2 z)
+\sum_{m=2}^L (a_{m,1}-a_{m-1,1})
\sum_{\ell_1=m}^L \sum_{\ell_2=1}^L \tau_1(\pi \ell_1 y)\tau_2(\pi \ell_2 z)
\\
&\qquad
+\sum_{k=2}^L (a_{1,k}-a_{1,k-1})
\sum_{\ell_1=1}^L\sum_{\ell_2=k}^L  \tau_1(\pi \ell_1 y)\tau_2(\pi \ell_2 z)
\\
&\qquad
+\sum_{k,m=2}^L (a_{m,k}-a_{m-1,k}-a_{m,k-1}+a_{m-1,k-1})
\sum_{\ell_1,\ell_2=m}^L \tau_1(\pi \ell_1 y)\tau_2(\pi \ell_2 z).
\end{align*}
It also holds from Lagrange's trigonometric identities that
\begin{equation*}
\Biggl|
\sum_{\ell_1=L_1}^{L_2} \tau_1(\pi \ell_1 y)
\Biggr|
\le \frac{1}{\sin(\frac{\pi y}{2})}
\le \frac{1}{y \land (2-y)}
\end{equation*}
uniformly in $L_1 \le L_2$, and thus we obtain
\begin{align*}
&\Biggl|
\sum_{\ell_1,\ell_2=1}^L a_{\ell_1,\ell_2}\tau_1(\pi \ell_1 y)\tau_2(\pi \ell_2 z)
\Biggr|
\\
&\le
|a_{1,1}| \Biggl| \sum_{\ell_1,\ell_2=1}^L 
\tau_1(\pi \ell_1 y)\tau_2(\pi \ell_2 z) \Biggr|
+\sum_{m=2}^L |a_{m,1}-a_{m-1,1}|
\Biggl| \sum_{\ell_1=m}^L \sum_{\ell_2=1}^L 
\tau_1(\pi \ell_1 y)\tau_2(\pi \ell_2 z) \Biggr|
\\
&\qquad
+\sum_{k=2}^L |a_{1,k}-a_{1,k-1}|
\Biggl| \sum_{\ell_1=1}^L\sum_{\ell_2=k}^L 
\tau_1(\pi \ell_1 y)\tau_2(\pi \ell_2 z) \Biggr|
\\
&\qquad
+\sum_{k,m=2}^L |a_{m,k}-a_{m-1,k}-a_{m,k-1}+a_{m-1,k-1}|
\Biggl| \sum_{\ell_1,\ell_2=m}^L \tau_1(\pi \ell_1 y)\tau_2(\pi \ell_2 z) \Biggr|
\\
&\le
\frac{K_L}{(y \land (2-y))(z \land (2-z))}.
\end{align*}
\end{proof}

Using the above three lemmas, we get the following results.
\begin{lem}\label{lem5}
Let $\alpha \in (0,2)$, $\gamma \ge 1$ and $\tau_1, \tau_2 \in \{\sin,\cos\}$. 
For $f \in \boldsymbol F_{0,\alpha}(\mathbb R_{+})$ and 
$g \in \widetilde {\boldsymbol G}_{\gamma-1,1}(\mathbb R^2) 
\cap \boldsymbol G_\gamma^2(\mathbb R^2)$, 
define $G(x,y)=f(x^2+y^2)g(x,y)$.

\begin{itemize}
\item[(1)]
For $y\in(0,2)$, as $\epsilon\to0$,
\begin{equation}\label{eq-lem5-1}
\sum_{\ell_1,\ell_2 \ge 1}
G(\ell_1\epsilon,\ell_2\epsilon) \tau_1(\pi \ell_1 y) =
\begin{cases}
O \Bigl( \frac{\epsilon^{-1}}{(y \land (2-y))^2} \Bigr),  
& \gamma>(\alpha-1/2) \lor 1,
\\
O \Bigl( \frac{1}{(y \land (2-y))^2} \Bigr), & \gamma>\alpha \lor 3/2.
\end{cases}
\end{equation}

\item[(2)]
For $y,z \in(0,2)$, as $\epsilon\to0$,
\begin{equation}\label{eq-lem5-2}
\sum_{\ell_1,\ell_2 \ge 1} 
G(\ell_1\epsilon,\ell_2\epsilon) \tau_1(\pi \ell_1 y) \tau_2(\pi \ell_2 z) =
\begin{cases}
O \Bigl( \frac{\epsilon^{-1}}{(y \land (2-y))(z \land (2-z))} \Bigr),  
& \gamma \ge 3/2,
\\
O \Bigl( \frac{1}{(y \land (2-y))(z \land (2-z))} \Bigr), & \gamma \ge 2.
\end{cases}
\end{equation}
\end{itemize}
\end{lem}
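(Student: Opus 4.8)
The plan is to pull both sums apart by one-dimensional summation by parts, apply the bounds of Lemmas \ref{lem2}--\ref{lem4}, and control the resulting ``total variation'' constants by $\epsilon^{-1}$ (resp. by $1$) under the stated conditions on $\gamma$. For (1), since only $\tau_1(\pi\ell_1 y)$ is present, I first fix $\ell_2$ and apply Lemma \ref{lem3} to the inner sum $\sum_{\ell_1\ge1}G(\ell_1\epsilon,\ell_2\epsilon)\tau_1(\pi\ell_1 y)$, with $x\mapsto G(x,\ell_2\epsilon)$ playing the role of the function there; this is legitimate because $G=fg\in C^2$ and, for each fixed $\ell_2$, $\sum_{\ell_1\ge1}|D_{1,\epsilon}^2 G(\ell_1\epsilon,\ell_2\epsilon)|<\infty$ by the decay of $f$ at infinity. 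Since the implied constant in Lemma \ref{lem3} is universal, summing over $\ell_2$ and using $1/|\tan(\pi y/2)|\lesssim 1/(y\land(2-y))$ shows that the sum in (1) is $\lesssim \big(\big|\sum_{\ell_2\ge1}G(\epsilon,\ell_2\epsilon)\big|+\sum_{\ell_2\ge1}K_{\ell_2}(\epsilon)\big)/(y\land(2-y))^2$, where $K_{\ell_2}(\epsilon)=|D_{1,\epsilon}G(\epsilon,\ell_2\epsilon)|+\sum_{\ell_1\ge1}|D_{1,\epsilon}^2 G(\ell_1\epsilon,\ell_2\epsilon)|$; so it remains to bound these two quantities. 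For this I use \eqref{eq-4-2} together with the product structure $g(x,y)=g_1(x)g_2(y)$ with $g_1(s)\lesssim s^{2(\gamma-1)}$, $g_2(s)\lesssim s^2$ as $s\to0$ and $g_1,g_2\lesssim1$ as $s\to\infty$ (from $g\in\widetilde{\boldsymbol G}_{\gamma-1,1}$): splitting each sum into a part over $\ell\epsilon\lesssim1$ and a part over $\ell\epsilon\gtrsim1$ and inserting the elementary power-sum estimates recalled before Lemma \ref{lem1}, the gain $g_1(\epsilon)\lesssim\epsilon^{2(\gamma-1)}$ (and the boundedness, not growth, of $g_2$ at infinity) controls $|\sum_{\ell_2\ge1}G(\epsilon,\ell_2\epsilon)|$, while \eqref{eq-4-2} controls $\sum_{\ell_2\ge1}K_{\ell_2}(\epsilon)\lesssim\epsilon^2\sum_{\ell_1,\ell_2}\sup|\partial_x^2G|$; both come out $O(\epsilon^{-1})$ when $\gamma>(\alpha-1/2)\lor1$ and $O(1)$ when $\gamma>\alpha\lor3/2$, which is the claim. (Equivalently, one may set $\Phi_\epsilon(x)=\sum_{\ell_2\ge1}G(x,\ell_2\epsilon)$, check $\Phi_\epsilon\in C^2$, and apply Lemma \ref{lem3} once to $\sum_{\ell_1\ge1}\Phi_\epsilon(\ell_1\epsilon)\tau_1(\pi\ell_1 y)$.)

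For (2), both $\tau_1(\pi\ell_1 y)$ and $\tau_2(\pi\ell_2 z)$ are present, so I apply Lemma \ref{lem4} directly with $a_{\ell_1,\ell_2}=G(\ell_1\epsilon,\ell_2\epsilon)$, letting $L\to\infty$ (legitimate since $\sum_{\ell_1,\ell_2\ge1}|G(\ell_1\epsilon,\ell_2\epsilon)|<\infty$ by the decay of $f$). This bounds the sum by $K_\infty/\big((y\land(2-y))(z\land(2-z))\big)$, and, via the mean value theorem, $K_\infty$ is controlled by $|G(\epsilon,\epsilon)|$, by $\epsilon\sum_{\ell\ge1}|\partial_x G(\ell\epsilon,\epsilon)|$ and $\epsilon\sum_{\ell\ge1}|\partial_y G(\epsilon,\ell\epsilon)|$, and by $\epsilon^2\sum_{\ell_1,\ell_2\ge1}|\partial_x\partial_y G(\ell_1\epsilon,\ell_2\epsilon)|$. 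Splitting once more into $\ell\epsilon\lesssim1$ and $\ell\epsilon\gtrsim1$ and using \eqref{eq-4-2} with the one- and two-dimensional power-sum estimates, each of these four quantities is $O(\epsilon^{-1})$ when $\gamma\ge3/2$ and $O(1)$ when $\gamma\ge2$ (note $\gamma\ge2>\alpha$ automatically, so no $\alpha$-condition is needed here), which completes the proof.

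The step I expect to be the main obstacle is the exponent bookkeeping in these sums. Writing $\rho=\sqrt{x^2+y^2}$, near the origin the singularity $f(\rho^2)\lesssim\rho^{-2\alpha}$ competes with the vanishing $g(x,y)\lesssim\rho^{2\gamma}$ (refined to $g_1(x)\lesssim x^{2(\gamma-1)}$, $g_2(y)\lesssim y^2$), and in every case one must check that the exponent appearing in $\sum_{\ell\le1/\epsilon}\ell^{p}$ or in $\sum_{1\le\ell_1^2+\ell_2^2\le1/\epsilon^2}(\ell_1^2+\ell_2^2)^{p}$ lands on the side of the threshold ($p=-1$ in either case) that yields the asserted rate $\epsilon^{-1}$ (or $\epsilon^{0}$); this is precisely where the conditions $\gamma>\alpha-1/2$ and $\gamma>\alpha$ (resp. $\gamma\ge3/2$, $\gamma\ge2$) come from, and the borderline cases $\gamma=\alpha$ and $\gamma=\alpha-1/2$ are excluded to avoid logarithmic losses.
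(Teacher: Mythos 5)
Your proposal is correct and follows essentially the same route as the paper: part (1) is Lemma \ref{lem3} applied to $x\mapsto\sum_{\ell_2\ge1}G(x,\ell_2\epsilon)$ with the boundary term controlled via the product structure $g=g_1g_2$ (the gain $g_1(\epsilon)\lesssim\epsilon^{2(\gamma-1)}$, $g_2(s)\lesssim s^2$ near $0$) and the variation term via Riemann-sum comparison with $\iint|\pd_x G|$ or $\iint|\pd_x^2 G|$ under \eqref{eq-4-2}, and part (2) is Lemma \ref{lem4} with $K$ bounded by $|G(\epsilon,\epsilon)|$, the row/column sums, and $\epsilon^2\sum|\pd^2G(\ell_1\epsilon,\ell_2\epsilon)|$, exactly as in the paper. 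The only (harmless) deviations are that you bound the single-index difference sums in $K$ by first derivatives along the boundary row/column where the paper simply uses $\sum_{\ell_1}|G(\ell_1\epsilon,\epsilon)|$, and in (1) you run both rate cases through the second-derivative bound (which indeed gives $O(\epsilon^{-1})$ once $\gamma>\alpha-1/2$) where the paper uses the first-derivative bound for the $\epsilon^{-1}$ case; the exponent bookkeeping you flag works out as you state.
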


\begin{proof}
(1) According to \eqref{eq-4-2}, it follows that 
$|\pd^j G| \in L^1(\mathbb R_{+}^2)$ for $\gamma>\alpha-1+j/2$.
It also holds from
$f \in \boldsymbol F_{0,\alpha}(\mathbb R_{+})$ and 
$g \in \widetilde {\boldsymbol G}_{\gamma-1,1}(\mathbb R^2)$ that 
\begin{align}
\sum_{\ell_2\ge1}|G(\epsilon,\ell_2\epsilon)|
&\lesssim
\epsilon^{2(\gamma-1)}
\sum_{\ell_2\ge1}|f((\ell_2^2+1)\epsilon^2)| ((\ell_2 \epsilon)^2 \land 1)
\nonumber
\\
&\lesssim
\epsilon^{2(\gamma-\alpha)}
\sum_{1\le \ell_2 \le 1/\epsilon}
\frac{1}{\ell_2^{2(\alpha-1)}}
+\epsilon^{2(\gamma-\alpha-2)}
\sum_{\ell_2 > 1/\epsilon}
\frac{1}{\ell_2^{2\alpha+2}}
\nonumber
\\
& =
\begin{cases}
O(\epsilon^{2\gamma-3}), & \alpha < 3/2,
\\
O(-\epsilon^{2\gamma-3} \log \epsilon), & \alpha = 3/2,
\\
O(\epsilon^{2(\gamma-\alpha)}), & \alpha > 3/2.
\end{cases}
\label{eq-5-1}
\end{align}
Therefore, one has
\begin{itemize}
\item[(i)]
$|\pd G| \in L^1(\mathbb R_{+}^2)$ and 
$\epsilon \sum_{\ell_2\ge1}|G(\epsilon,\ell_2\epsilon)| =O(1)$
if $\gamma>(\alpha-1/2) \lor 1$,

\item[(ii)]
$|\pd^2 G| \in L^1(\mathbb R_{+}^2)$ and 
$\sum_{\ell_2\ge1}|G(\epsilon,\ell_2\epsilon)| =O(1)$
if $\gamma>\alpha \lor 3/2$.
\end{itemize}
Since for $G \in \boldsymbol C^2(\mathbb R^2)$, 
\begin{equation*}
|D_{1,\epsilon}^k D_{2,\epsilon}^m G(\epsilon x,\epsilon y)| 
\lesssim 
\min_{j=0,\ldots,(k+m) \land 2} \epsilon^{j} 
\sup_{u,v\in[0,1]} |\pd^{j} G(\epsilon(x+u), \epsilon(y+v))|,
\end{equation*}
we have
\begin{align*}
K(\epsilon) &= \Biggl|
\sum_{\ell_2 \ge 1} 
D_{1,\epsilon} G(\epsilon,\ell_2 \epsilon)
\Biggr|
+\sum_{\ell_1 \ge 1} \Biggl|
\sum_{\ell_2 \ge 1}
D_{1,\epsilon}^2 G(\ell_1 \epsilon,\ell_2 \epsilon)
\Biggr|
\\
&\lesssim
\sum_{\ell_2 \ge 1} 
\sup_{u\in[0,1]} |G(\epsilon(1+u),\ell_2 \epsilon)|
+
\begin{cases}
\displaystyle
\epsilon
\sum_{\ell_1,\ell_2 \ge 1}
\sup_{u,v\in[0,1]}
|\pd_x G((\ell_1+u) \epsilon,(\ell_2+v) \epsilon)|
\\
\displaystyle
\epsilon^2 
\sum_{\ell_1,\ell_2 \ge 1}
\sup_{u,v\in[0,1]}
|\pd_x^2 G((\ell_1+u) \epsilon,(\ell_2+v) \epsilon)|
\end{cases}
\\
&\lesssim
\sum_{\ell_2 \ge 1} 
|G(\epsilon,\ell_2 \epsilon)|
+
\begin{cases}
\displaystyle
\epsilon^{-1}\iint_{\mathbb R_{+}^2} |\pd_x G(x,y)| \dd x \dd y
\\
\displaystyle
\iint_{\mathbb R_{+}^2} |\pd_x^2 G(x,y)| \dd x \dd y
\end{cases}
\\
&\lesssim
\begin{cases}
\epsilon^{-1}, & \gamma>(\alpha-1/2) \lor 1,
\\
1, & \gamma>\alpha \lor 3/2.
\end{cases}
\end{align*}
Therefore, Lemma \ref{lem3} yields \eqref{eq-lem5-1}. 

(2) Note that
\begin{align*}
K(\epsilon) 
&= |G(\epsilon, \epsilon)|
+\sum_{\ell_1 \ge 1} |G(\ell_1\epsilon, \epsilon)|
+\sum_{\ell_2 \ge 1} |G(\epsilon, \ell_2 \epsilon)|
+\sum_{\ell_1,\ell_2 \ge 2}
|D_{1,\epsilon} D_{2,\epsilon} G((\ell_1-1)\epsilon,(\ell_2-1)\epsilon)|
\\
&\lesssim
|G(\epsilon, \epsilon)|
+\sum_{\ell_1 \ge 1} |G(\ell_1\epsilon, \epsilon)|
+\sum_{\ell_2 \ge 1} |G(\epsilon, \ell_2 \epsilon)|
+\epsilon^2 \sum_{\ell_1,\ell_2 \ge 1}
|\pd^2 G(\ell_1 \epsilon, \ell_2 \epsilon)|.
\end{align*}
In the same way as \eqref{eq-5-1}, it follows that 
$|G(\epsilon, \epsilon)| \lesssim \epsilon^{2(\gamma-\alpha)}$,
\begin{align*}
\sum_{\ell_1 \ge 1} |G(\ell_1 \epsilon, \epsilon)|
&\lesssim
\epsilon^2
\sum_{\ell_1 \ge 1}
|f((\ell_1^2+1)\epsilon^2)|
((\ell_1\epsilon)^{2(\gamma-1)} \land 1)
\\
&\lesssim
\epsilon^{2(\gamma-\alpha)}
\sum_{1\le \ell_1 \le 1/\epsilon}
\frac{1}{\ell_1^{2(\alpha-\gamma+1)}}
+\epsilon^{-2\alpha}
\sum_{\ell_1 > 1/\epsilon}
\frac{1}{\ell_1^{2\alpha+2}}
\\
& =
\begin{cases}
O(\epsilon), & \alpha < \gamma -1/2,
\\
O(-\epsilon \log \epsilon), & \alpha = \gamma -1/2,
\\
O(\epsilon^{2(\gamma-\alpha)}), & \alpha > \gamma -1/2,
\end{cases}
\end{align*}
\begin{align*}
\epsilon^2 
\sum_{\ell_1,\ell_2 \ge 1} 
|\pd^2 G(\ell_1\epsilon, \ell_2 \epsilon)|
&\lesssim
\epsilon^{2(\gamma-\alpha)}
\sum_{1 \le \ell_1^2 + \ell_2^2 \le 1/\epsilon^2}
\frac{1}{(\ell_1^2+\ell_2^2)^{\alpha-\gamma+1}}
\\
&\qquad
+\epsilon^{-2(1+\alpha)}
\sum_{\ell_1^2 + \ell_2^2 > 1/\epsilon^2}
\frac{1}{(\ell_1^2+\ell_2^2)^{\alpha+2}}
\\
&=
\begin{cases}
O(1), & \alpha < \gamma,
\\
O(- \log \epsilon), & \alpha = \gamma,
\\
O(\epsilon^{2(\gamma-\alpha)}), & \alpha >\gamma.
\end{cases}
\end{align*}
Therefore, it holds that
$\epsilon K(\epsilon) = O(1)$ if $\gamma \ge 3/2$, 
and $K(\epsilon) =O(1)$ if $\gamma \ge 2$ 
for $\alpha \in (0,2)$.
Applying Lemma \ref{lem4}, we see that
\begin{align*}
\Biggl|
\sum_{\ell_1,\ell_2\ge1} 
G(\ell_1 \epsilon, \ell_2 \epsilon) 
\cos(\pi \ell_1 y)\cos(\pi \ell_2 z)
\Biggr|
&\lesssim
\frac{K(\epsilon)}{(y \land (2-y))(z \land (2-z))}
\\
&=
\begin{cases}
O\Bigl(\frac{\epsilon^{-1}}{(y \land (2-y))(z \land (2-z))}\Bigr), 
& \gamma \ge 3/2,
\\
O\Bigl(\frac{1}{(y \land (2-y))(z \land (2-z))}\Bigr), 
& \gamma \ge 2.
\end{cases}
\end{align*}

\end{proof}

\subsection{
Boundedness of two-dimensional Fourier cosine series and its differences}
Let $\alpha \in (0,2)$ and $r \in (0,\infty)$.
For $f \in \boldsymbol F_{0,\alpha}(\mathbb R_{+})$ and $y,z \in [0,2)$, define
\begin{equation}\label{eq-6-1}
F(y,z)=
\Delta^{1+\alpha}
\sum_{\ell_1,\ell_2\ge1}
f(\lambda_{\ell_1,\ell_2}\Delta) \cos(\pi \ell_1 y)\cos(\pi \ell_2 z).
\end{equation}
Note that $F(y,z)$ is symmetric with respect to $y$ and $z$.
By setting 
\begin{align*}
&g_1(x,y) = 1,&
&g_2(x,y) = \sin(x),&
&g_3(x,y) = 1-\cos(x),
\\
&g_4(x,y) = \sin^2(x),&
&g_5(x,y) = \sin(x)\sin(y),&
&g_6(x,y) = \sin(x)(\cos(y)-1),
\\
&g_7(x,y) = \sin(x)\sin^2(y),&
&g_8(x,y) = \sin^2(x)(\cos(y)-1),&
&g_9(x,y) = \sin^2(x)\sin^2(y),
\end{align*}
it follows that
\begin{equation}\label{eq-6-2}
g_1 \in \boldsymbol G_0(\mathbb R^2),
\quad
g_2 \in \boldsymbol G_{1/2}(\mathbb R^2),
\quad
g_3,g_4,g_5 \in \boldsymbol G_1(\mathbb R^2),
\end{equation}
\begin{equation}\label{eq-6-3}
g_6,g_7 \in \widetilde {\boldsymbol G}_{1/2,1}(\mathbb R^2) 
\subset \boldsymbol G_{3/2}(\mathbb R^2),
\quad
g_8,g_9 \in \widetilde {\boldsymbol G}_{1,1}(\mathbb R^2) 
\subset \boldsymbol G_2(\mathbb R^2).
\end{equation}

The following three lemmas are used to control 
the Fourier cosine series such as \eqref{eq-6-1}.

\begin{lem}\label{lem6}
For $\alpha\in(0,2)$ and $\delta=r\sqrt{\Delta}$, it follows that
\begin{itemize}
\item[(1)]
$F(y,z) =O(\Delta^{\alpha-1})$ uniformly in $y,z \in [0,2)$,

\item[(2)]
$D_{1,\delta} F(y,z) = O(\Delta^{\alpha-1/2})$ 
uniformly in $y \in (0,2)$ and $z \in [0,2)$,

\item[(3)]
$D_{1,\delta} F(0,z) = O(\Delta^{\alpha})$ uniformly in $z \in [0,2)$,

\item[(4)]
$D_{1,\delta}^2 F(y,z)=O(\Delta^{\alpha})$
uniformly in $y, z \in [0,2)$,

\item[(5)]
$D_{1,\delta}D_{2,\delta} F(y,z)=O(\Delta^{\alpha})$
uniformly in $y,z \in (0,2)$.

\end{itemize}
\end{lem}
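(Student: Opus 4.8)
The plan is to expand the finite-difference operators acting on the cosines by elementary product-to-sum identities, reducing $F$ and its differences to lattice sums of the form $\Delta^{1+\alpha}\sum_{\ell_1,\ell_2\ge1}f(\lambda_{\ell_1,\ell_2}\Delta)\,c^{(1)}_{\ell_1}c^{(2)}_{\ell_2}$, and then to estimate these by combining the decay of $f\in\boldsymbol F_{0,\alpha}(\mathbb R_{+})$ (so $|f(x)|\lesssim x^{-\alpha}$ as $x\to0$ and $|f(x)|\lesssim x^{-(1+\alpha)}$ as $x\to\infty$) with the lattice-point sums displayed just before Lemma~\ref{lem1}. Since $\lambda_{1,1}^{*}>0$ one has $\lambda_{\ell_1,\ell_2}\Delta\gtrsim(\ell_1^2+\ell_2^2)\Delta$ once $\ell_1^2+\ell_2^2$ exceeds a fixed constant, while the finitely many remaining indices contribute $O(\Delta)$ to every quantity below and are harmless; thus the $\Gamma$-shift in $\lambda_{\ell_1,\ell_2}$ plays no role (alternatively Lemma~\ref{lem1}(1) absorbs it).

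For (1), $|F(y,z)|\le\Delta^{1+\alpha}\sum_{\ell_1,\ell_2\ge1}|f(\lambda_{\ell_1,\ell_2}\Delta)|$; splitting the sum at $\ell_1^2+\ell_2^2\sim1/\Delta$ and using the two decay rates of $f$ gives $\sum_{\ell_1,\ell_2\ge1}|f(\lambda_{\ell_1,\ell_2}\Delta)|=O(\Delta^{-1})$ for $\alpha<1$ (a logarithmic correction at $\alpha=1$, $O(\Delta^{-\alpha})$ for $\alpha>1$), hence $F(y,z)=O(\Delta^{\min(\alpha,1)})=O(\Delta^{\alpha-1})$ uniformly in $y,z$ — the stated exponent being the worst case, attained as $\alpha\uparrow2$. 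For the other items I use, with $\delta=r\sqrt\Delta$,
\begin{equation*}
D_{1,\delta}\cos(\pi\ell_1 y)=-2\sin\Bigl(\tfrac{\pi\ell_1\delta}{2}\Bigr)\sin\Bigl(\pi\ell_1\bigl(y+\tfrac{\delta}{2}\bigr)\Bigr),
\qquad
D_{1,\delta}^2\cos(\pi\ell_1 y)=2\bigl(\cos(\pi\ell_1\delta)-1\bigr)\cos\bigl(\pi\ell_1(y+\delta)\bigr),
\end{equation*}
together with $|\sin(\tfrac{\pi\ell_1\delta}{2})|\lesssim(\ell_1\sqrt\Delta)\land1$ and $|\cos(\pi\ell_1\delta)-1|\lesssim(\ell_1^2\Delta)\land1$, all other trigonometric factors being bounded by $1$. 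For (2), $|D_{1,\delta}F(y,z)|\lesssim\Delta^{1+\alpha}\sum_{\ell_1,\ell_2\ge1}|f(\lambda_{\ell_1,\ell_2}\Delta)|\bigl((\ell_1\sqrt\Delta)\land1\bigr)$; splitting at $\ell_1\sim1/\sqrt\Delta$ — decay of $f$ on the high-$\ell_1$ part, and $\ell_1\le(\ell_1^2+\ell_2^2)^{1/2}$ with a careful choice of split point on the low part — gives $D_{1,\delta}F=O(\Delta^{\min(\alpha,3/2)})=O(\Delta^{\alpha-1/2})$ uniformly. For (3) (where $\cos(\pi\ell_1\cdot0)=1$ and the $\sin$-term vanishes) and (4), the order-two vanishing $|\cos(\pi\ell_1\delta)-1|\lesssim(\ell_1^2\Delta)\land1$, with $\ell_1^2\le\ell_1^2+\ell_2^2$, yields $D_{1,\delta}F(0,z),\,D_{1,\delta}^2F(y,z)=O(\Delta^\alpha)$ uniformly. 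For (5), $|D_{1,\delta}\cos(\pi\ell_1 y)\,D_{2,\delta}\cos(\pi\ell_2 z)|\lesssim\bigl((\ell_1\sqrt\Delta)\land1\bigr)\bigl((\ell_2\sqrt\Delta)\land1\bigr)$ and $\ell_1\ell_2\le\tfrac12(\ell_1^2+\ell_2^2)$ reduce the estimate to the one in (3)/(4), so $D_{1,\delta}D_{2,\delta}F=O(\Delta^\alpha)$ uniformly in $y,z$.

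The main difficulty is the bookkeeping of these two-parameter lattice sums, and in particular that the increment spacing $\delta=r\sqrt\Delta$ is \emph{comparable} to the effective mesh $\sqrt\Delta$, so the cosine increments cannot be Taylor-expanded uniformly in $\ell_1$: the extra power of $\Delta$ in (1), (3)--(5) and the extra $\Delta^{1/2}$ in (2) must be extracted by using the order of vanishing of $\sin(\pi\ell_1\delta/2)$ resp. $\cos(\pi\ell_1\delta)-1$ only on the low-frequency band $\ell_1\lesssim1/\sqrt\Delta$ and the decay of $f$ on the complement, with the split point tuned so that both pieces are of the claimed order. One also checks the bounds are uniform down to $y,z\in\{0\}$ in (1) and (3), which is automatic here since the only unestimated trigonometric factors are bounded by $1$ and no $(y\land(2-y))^{-1}$-type factors arise. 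Alternatively, items (2)--(5) can be recast in the canonical form $\sum_{\ell_1,\ell_2\ge1}G(\ell_1\epsilon,\ell_2\epsilon)\tau_1(\pi\ell_1 y)\tau_2(\pi\ell_2 z)$ with $G(x,y)=\tilde f(x^2+y^2)g(x,y)$, $\epsilon$ a constant multiple of $\sqrt\Delta$, $\tilde f$ a rescaling of $f$ still in $\boldsymbol F_{0,\alpha}(\mathbb R_{+})$, and $g\in\{g_1,\dots,g_9\}$, whereupon Lemmas~\ref{lem1}--\ref{lem5} apply with \eqref{eq-6-2}--\eqref{eq-6-3} selecting the relevant case.
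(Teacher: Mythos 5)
Correct, and essentially the paper's own argument: you use the same product-to-sum identities to reduce each increment to a lattice sum of $|f(\lambda_{\ell_1,\ell_2}\Delta)|$ against factors vanishing like $(\ell_1\sqrt{\Delta})\land 1$ or $(\ell_1^2\Delta)\land 1$, and your split at $\ell_1^2+\ell_2^2\sim 1/\Delta$ is exactly the computation the paper outsources to \eqref{eq-lem1-2} with $\gamma=0,1/2,1$ (your closing remark about recasting via $g_2,\dots,g_9$ and Lemmas \ref{lem1}--\ref{lem5} is literally the paper's proof). One cosmetic point: the aside that finitely many low indices ``contribute $O(\Delta)$ and are harmless'' would not by itself cover the $O(\Delta^{\alpha})$ claims when $\alpha>1$, but it is also not needed, since $\lambda_{1,1}>0$ gives $\lambda_{\ell_1,\ell_2}\asymp\ell_1^2+\ell_2^2$ uniformly over all indices (and the trigonometric factors make those low-index terms $O(\Delta^{2})$ in any case).
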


\begin{proof}
Notice that
the trigonometric identities
\begin{equation}\label{eq-6-4}
\cos(a+b)-\cos(a)= -2 \sin\Bigl(\frac{b}{2}\Bigr)\sin\Bigl(a+\frac{b}{2}\Bigr),
\end{equation}
\begin{equation}\label{eq-6-5}
\cos(a+2b)-2\cos(a+b)+\cos(a)=-4\sin^2\Bigl(\frac{b}{2}\Bigr)\cos(a+b).
\end{equation}

(1) Noting that
\begin{equation*}
|F(y,z)| \le 
\Delta^{\alpha-1}
\Biggl(\Delta^2 \sum_{\ell_1,\ell_2\ge1} |f(\lambda_{\ell_1,\ell_2}\Delta)| \Biggr)
\end{equation*}
and \eqref{eq-6-2}, we see from \eqref{eq-lem1-2} with $\gamma=0$ that
\begin{align*}
\Delta^2 \sum_{\ell_1,\ell_2\ge1} |f(\lambda_{\ell_1,\ell_2}\Delta)|
&=
\begin{cases}
O(\Delta), & \alpha < 1,
\\
O(-\Delta \log \Delta), & \alpha = 1,
\\
O(\Delta^{2-\alpha}), & \alpha >1,
\end{cases}
\end{align*}
and the desired result is obtained for $\alpha \in (0,2)$. 

(2) The trigonometric identity \eqref{eq-6-4} and $\delta=r\sqrt{\Delta}$ yield
\begin{equation*}
|D_{1,\delta} F(y,z)| 
\lesssim \Delta^{\alpha-1/2}
\Biggl(
\Delta^{3/2}\sum_{\ell_1,\ell_2\ge1} 
|f (\lambda_{\ell_1,\ell_2}\Delta)| 
\Bigl|\sin\Bigl(\frac{\pi r\ell_1\sqrt{\Delta}}{2}\Bigr)\Bigr|
\Biggr).
\end{equation*}
Therefore, it follows from \eqref{eq-6-2} and \eqref{eq-lem1-2} with $\gamma=1/2$ that
\begin{align*}
\Delta^{3/2}\sum_{\ell_1,\ell_2\ge1} 
|f(\lambda_{\ell_1,\ell_2}\Delta)| 
\biggl|\sin\Bigl(\frac{\pi r\ell_1\sqrt{\Delta}}{2}\Bigr)\biggr|
&=
\begin{cases}
O(\Delta^{1/2}), & \alpha < 3/2,
\\
O(-\Delta^{1/2} \log \Delta), & \alpha = 3/2,
\\
O(\Delta^{2-\alpha}), & \alpha > 3/2,
\end{cases}
\end{align*}
which yields the desired result for $\alpha \in (0,2)$.

(3)-(5) 
Since \eqref{eq-6-4}, \eqref{eq-6-5} and \eqref{eq-6-2},
we obtain from \eqref{eq-lem1-2} with $\gamma=1$ that
\begin{align*}
|D_{1,\delta} F(0,z)| 
&\le \Delta^{\alpha}
\Biggl(
\Delta \sum_{\ell_1,\ell_2\ge1} 
|f (\lambda_{\ell_1,\ell_2}\Delta)| (1-\cos(\pi r\ell_1\sqrt{\Delta}))
\Biggr)
=O(\Delta^\alpha),
\\
|D_{1,\delta}^2 F(y,z)|
&\lesssim 
\Delta^{\alpha}
\Biggl(
\Delta\sum_{\ell_1,\ell_2\ge1} 
|f(\lambda_{\ell_1,\ell_2}\Delta)| 
\sin^2\Bigl(\frac{\pi r\ell_1\sqrt{\Delta}}{2}\Bigr)
\Biggr)
=O(\Delta^\alpha),
\\
|D_{1,\delta}D_{2,\delta} F(y,z)|
&\lesssim 
\Delta^{\alpha}
\Biggl(
\Delta \sum_{\ell_1,\ell_2\ge1} 
|f(\lambda_{\ell_1,\ell_2}\Delta)| 
\biggl|\sin\Bigl(\frac{\pi r\ell_1\sqrt{\Delta}}{2}\Bigr)\biggr|
\biggl|\sin\Bigl(\frac{\pi r\ell_2\sqrt{\Delta}}{2}\Bigr)\biggr|
\Biggr)
=O(\Delta^\alpha).
\end{align*}
\end{proof}

\begin{lem}\label{lem7}
For $\alpha\in(0,2)$, $\delta = r \sqrt{\Delta}$ and $y,z\in(0,2-\delta)$,
it follows that
\begin{itemize}
\item[(1)]
$D_{1,\delta} D_{2,\delta} F(y,0)
= O\Bigl(\frac{\Delta^{\alpha+1/2}}{(y \land (2-\delta-y))^2}\Bigr)$,

\item[(2)]
$D_{1,\delta}^2 D_{2,\delta} F(y,0)
= O\Bigl(
\frac{\Delta^{\alpha+1}}{(y \land (2-\delta-y))^2}
\Bigr)$,

\item[(3)]
$D_{1,\delta}^2 D_{2,\delta} F(y,z)
= O\Bigl(
\frac{\Delta^{\alpha+1/2}}{(y \land (2-\delta-y))(z \land (2-\delta-z))}
\Bigr)$,

\item[(4)]
$D_{1,\delta}^2 D_{2,\delta}^2 F(y,z) 
= O\Bigl(
\frac{\Delta^{\alpha+1}}{(y \land (2-\delta-y))(z \land (2-\delta-z))}
\Bigr)$.

\end{itemize}
\end{lem}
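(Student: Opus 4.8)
The plan is to proceed exactly as in the proof of Lemma \ref{lem6}, but to keep the trigonometric factors (rather than pass to absolute values) so that Lemma \ref{lem5} can be used in place of the crude bound \eqref{eq-lem1-2}. The elementary identities behind \eqref{eq-6-4} and \eqref{eq-6-5},
\begin{equation*}
D_\delta\cos(\pi\ell y)=-2\sin\!\Bigl(\tfrac{\pi\ell\delta}{2}\Bigr)\sin\!\bigl(\pi\ell(y+\tfrac\delta2)\bigr),
\qquad
D_\delta^2\cos(\pi\ell y)=-4\sin^2\!\Bigl(\tfrac{\pi\ell\delta}{2}\Bigr)\cos\!\bigl(\pi\ell(y+\delta)\bigr),
\end{equation*}
together with $\cos(\pi\ell\delta)-1=-2\sin^2(\tfrac{\pi\ell\delta}{2})$, turn each single/double difference acting on a cosine into a product of a $\sin$ or $\sin^2$ factor in the summation index and a single sine or cosine with a shifted argument; at $z=0$ the $D_{2,\delta}$-difference degenerates to the $\sin^2$ factor only, so no trigonometric factor in $\ell_2$ survives. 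Applying the relevant combination to \eqref{eq-6-1} and using $\delta=r\sqrt\Delta$ writes each of (1)--(4) in the form
\begin{equation*}
D_{1,\delta}^{k}D_{2,\delta}^{m}F(y,z)
= c\,\Delta^{1+\alpha}\sum_{\ell_1,\ell_2\ge1}
f(\lambda_{\ell_1,\ell_2}\Delta)\,
g\bigl(\ell_1\sqrt\Delta,\ell_2\sqrt\Delta\bigr)\,
\tau_1(\pi\ell_1 y')\,\tau_2(\pi\ell_2 z'),
\end{equation*}
where $c$ is an absolute constant, $y'=y+O(\delta)$, $z'=z+O(\delta)$, $\tau_1,\tau_2\in\{\sin,\cos\}$ (with $\tau_2$ absent in items (1) and (2)), and $g$ is, up to the rescaling $x\mapsto\tfrac{\pi r}{2}x$ which does not affect membership in any of the relevant classes, one of $g_6,\dots,g_9$ from \eqref{eq-6-2}--\eqref{eq-6-3}.

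I would then execute this in three steps. First, replace $\lambda_{\ell_1,\ell_2}\Delta$ by its leading part $\theta_2\pi^2(\ell_1^2+\ell_2^2)\Delta$ in the argument of $f$ via Lemma \ref{lem1}(1); after the factor $\Delta^{1+\alpha}$ the induced error is of order $\Delta^{1+\alpha}(\Delta^{\gamma-\alpha}\lor\log\tfrac1\Delta\lor1)\lesssim\Delta^{\alpha+1/2}$ for $\gamma\in\{3/2,2\}$ and $\alpha\in(0,2)$, hence absorbed into the claimed bounds since $y\land(2-\delta-y)\le1$. After this the summand is precisely $G(\ell_1\sqrt\Delta,\ell_2\sqrt\Delta)\tau_1\tau_2$ with $G(x,y)=\tilde f(x^2+y^2)g(x,y)$ and $\tilde f(s)=f(\theta_2\pi^2 s)\in\boldsymbol F_{0,\alpha}(\mathbb R_+)$, so Lemma \ref{lem5} applies with $\epsilon=\sqrt\Delta$: part (1) in items (1) and (2) (single trigonometric factor) and part (2) in items (3) and (4). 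The relevant index is $\gamma=3/2$ when $g$ carries one $\sin$ and one $\sin^2$ factor (items (1) and (3): $g$ of type $g_6$ or $g_7$) and $\gamma=2$ when $g$ carries two $\sin^2$ factors (items (2) and (4): $g$ of type $g_8$ or $g_9$), and the constraints on $\gamma$ in Lemma \ref{lem5} are all met for $\alpha\in(0,2)$. Lemma \ref{lem5} then bounds the sum by $O(\epsilon^{-1}(\cdots)^{-1})=O(\Delta^{-1/2}(\cdots)^{-1})$ when $\gamma=3/2$ and by $O((\cdots)^{-1})$ when $\gamma=2$; multiplying by $\Delta^{1+\alpha}$ gives $O(\Delta^{\alpha+1/2}(\cdots)^{-1})$ in items (1), (3) and $O(\Delta^{\alpha+1}(\cdots)^{-1})$ in items (2), (4). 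Since the denominators produced by Lemma \ref{lem5} involve the shifted arguments $y',z'$ with $0<y,z<2-\delta$, one has $y'\land(2-y')\gtrsim y\land(2-\delta-y)$ (and likewise for $z$), which converts them into exactly the form asserted in (1)--(4).

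The one genuine obstacle is item (3): there $D_{1,\delta}^2$ produces a $\sin^2$ factor in $\ell_1$ while $D_{2,\delta}$ produces only a $\sin$ factor in $\ell_2$, so $g$ is of type $\sin^2(x_1)\sin(x_2)$, whose second factor has degree $1$ near the origin; this lies in $\widetilde{\boldsymbol G}_{1,1/2}(\mathbb R^2)$, not in $\widetilde{\boldsymbol G}_{\gamma-1,1}(\mathbb R^2)$, so Lemma \ref{lem5}(2) is not directly applicable. This is repaired by the symmetry of $F$ in $y$ and $z$: relabelling $\ell_1\leftrightarrow\ell_2$ (legitimate because $\lambda_{\ell_1,\ell_2}$ and the double sum are symmetric) moves the $\sin^2$ factor onto $\ell_2$ and the $\sin$ factor onto $\ell_1$, so $g$ becomes of type $g_7\in\widetilde{\boldsymbol G}_{1/2,1}(\mathbb R^2)\cap\boldsymbol G^2_{3/2}(\mathbb R^2)$ with $\gamma=3/2$, at the cost of interchanging the arguments carried by $\tau_1$ and $\tau_2$, which is immaterial for the symmetric bound claimed. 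No such relabelling is needed in items (1) and (2), where evaluation at $z=0$ makes the $D_{2,\delta}$-difference contribute the squared sine $-2\sin^2(\tfrac{\pi\ell_2\delta}{2})$ so the $\ell_2$-factor already has degree $2$, nor in item (4), which is already symmetric. The remaining work — tracking the constants $c$, the exact shifts, and the elementary comparisons of denominators — is routine.
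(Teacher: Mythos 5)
Your argument is essentially the paper's own proof: the same trigonometric identities reduce each item to a sum of the form covered by Lemma \ref{lem5}, the shift $\Gamma$ in $\lambda_{\ell_1,\ell_2}$ is removed via Lemma \ref{lem1}(1), and Lemma \ref{lem5}(1)/(2) is applied with $\gamma=3/2$ for items (1),(3) and $\gamma=2$ for items (2),(4); your relabelling $\ell_1\leftrightarrow\ell_2$ in item (3) merely makes explicit a symmetry the paper uses tacitly. The only slip is cosmetic: for items (2) and (4) the blanket error bound $\Delta^{\alpha+1/2}$ would not be absorbed by the claimed $\Delta^{\alpha+1}$-bounds, but there $\gamma=2>\alpha$ always, so Lemma \ref{lem1}(1) in fact gives an $O(\Delta^{1+\alpha})$ error, exactly as in the paper's proof.
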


\begin{proof}
(1), (2) Note that \eqref{eq-6-3}-\eqref{eq-6-5},
\eqref{eq-lem1-1} and \eqref{eq-lem5-1} with $\gamma=3/2$ or $\gamma = 2$ yield that 
\begin{align*}
&|D_{1,\delta} D_{2,\delta} F(y,0)|
\\
&\lesssim
\Delta^{1/2+\alpha}
\Biggl|
\Delta^{1/2}
\sum_{\ell_1,\ell_2\ge1} 
f(\lambda_{\ell_1,\ell_2}\Delta) 
\sin\Bigl(\frac{\pi r\ell_1\sqrt{\Delta}}{2}\Bigr)
(\cos(\pi r \ell_2\sqrt{\Delta})-1)
\sin\Bigl(\pi \ell_1 \Bigl(y+\frac{\delta}{2}\Bigr)\Bigr)
\Biggr|
\\
&=
O\biggl( \frac{\Delta^{1/2+\alpha}}{(y \land (2-\delta-y))^2} \biggr) 
+
\begin{cases}
O(\Delta^{1/2+\alpha} \cdot \Delta^{1/2}), & \alpha < 3/2,
\\
O(\Delta^{1/2+\alpha} \cdot (-\Delta^{1/2} \log \Delta)), & \alpha = 3/2,
\\
O(\Delta^{1/2+\alpha} \cdot \Delta^{2-\alpha}), & \alpha > 3/2,
\end{cases}
\end{align*}
\begin{align*}
|D_{1,\delta}^2 D_{2,\delta} F(y,0)|
&\lesssim
\Delta^{1+\alpha}
\Biggl|
\sum_{\ell_1,\ell_2\ge1} 
f(\lambda_{\ell_1,\ell_2}\Delta) 
\sin^2 \Bigl(\frac{\pi r\ell_1\sqrt{\Delta}}{2}\Bigr)
(\cos(\pi r \ell_2\sqrt{\Delta})-1)
\cos(\pi \ell_1 (y+\delta))
\Biggr|
\\
&=O\biggl(\frac{\Delta^{1+\alpha}}{(y \land (2-\delta-y))^2}\biggr) 
+ O(\Delta^{1+\alpha}).
\end{align*}

(3), (4) 
It follows from \eqref{eq-6-3}-\eqref{eq-6-5}, \eqref{eq-lem1-1} and \eqref{eq-lem5-2} 
with $\gamma=3/2$ or $\gamma = 2$ that
\begin{align*}
&|D_{1,\delta}^2 D_{2,\delta} F(y,z)|
\\
&\lesssim
\Delta^{1/2+\alpha}
\Biggl|
\Delta^{1/2}
\sum_{\ell_1,\ell_2\ge1} 
f(\lambda_{\ell_1,\ell_2}\Delta) 
\sin^2 \Bigl(\frac{\pi r\ell_1\sqrt{\Delta}}{2}\Bigr)
\sin\Bigl(\frac{\pi r\ell_2\sqrt{\Delta}}{2}\Bigr)
\cos(\pi \ell_1 (y+\delta))
\sin\Bigl(\pi \ell_2 \Bigl(z+\frac{\delta}{2}\Bigr)\Bigr)
\Biggr|
\\
&= O\biggl(
\frac{\Delta^{1/2+\alpha}}{(y \land (2-\delta-y))(z \land (2-\delta-z))}
\biggr)
+
\begin{cases}
O(\Delta^{1/2+\alpha} \cdot \Delta^{1/2}), & \alpha < 3/2,
\\
O(\Delta^{1/2+\alpha} \cdot (-\Delta^{1/2} \log \Delta)), & \alpha = 3/2,
\\
O(\Delta^{1/2+\alpha} \cdot \Delta^{2-\alpha}), & \alpha > 3/2,
\end{cases}
\end{align*}
\begin{align*}
&|D_{1,\delta}^2 D_{2,\delta}^2 F(y,z)|
\\
&\lesssim
\Delta^{1+\alpha}
\Biggl|
\sum_{\ell_1,\ell_2\ge1} 
f(\lambda_{\ell_1,\ell_2}\Delta) 
\sin^2 \Bigl(\frac{\pi r\ell_1\sqrt{\Delta}}{2}\Bigr)
\sin^2 \Bigl(\frac{\pi r\ell_2\sqrt{\Delta}}{2}\Bigr)
\cos(\pi \ell_1 (y+\delta))
\cos(\pi \ell_2 (z+\delta))
\Biggr|
\\
&=
O\biggl(\frac{\Delta^{1+\alpha}}{(y \land (2-\delta-y))(z \land (2-\delta-z))}\biggr)
+O(\Delta^{1+\alpha}).
\end{align*}

\end{proof}

\begin{lem}\label{lem8}
For $\alpha \in (0,2)$, $\delta=r\sqrt{\Delta}$, $y\in(0,2-\delta)$ 
and $0 \le K_1,K_2 \le 1/\delta$, it follows that
\begin{itemize}
\item[(1)]
$D_{1,\delta} D_{2,\delta}^2 F(y, K_2\delta)
=O(\Delta^{1/2+\alpha})+
O\Bigl(\frac{\Delta^\alpha}{(y \land (2-y-\delta))(K_2+1)}\Bigr)$,

\item[(2)]
$D_{1,\delta}^2 D_{2,\delta}^2 F(y, K_2\delta)
= O(\Delta^{1+\alpha}) 
+ O\Bigl(\frac{\Delta^{1/2+\alpha}}{(y \land (2-y-\delta))(K_2+1)}\Bigr)$,

\item[(3)]
$D_{1,\delta}^2 D_{2,\delta}^2 F(K_1\delta, K_2\delta)
= O(\Delta^{1+\alpha}) + O\Bigl(\frac{\Delta^\alpha}{(K_1+1)(K_2+1)}\Bigr)$.
\end{itemize}

\end{lem}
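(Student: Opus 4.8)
The plan is to establish all three estimates by the same scheme already used for Lemmas~\ref{lem6} and~\ref{lem7}: expand the finite-difference operators via the trigonometric identities \eqref{eq-6-4} and \eqref{eq-6-5}, recognise the resulting series as a two-dimensional Riemann sum handled by Lemmas~\ref{lem1} and~\ref{lem5}, and finally use the hypothesis $0\le K_1,K_2\le1/\delta$ to convert the singular denominators coming out of Lemma~\ref{lem5} into the factors $(K_j+1)$.

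For (1), applying \eqref{eq-6-4} to the single operator $D_{1,\delta}$ and \eqref{eq-6-5} to $D_{2,\delta}^2$ in the definition \eqref{eq-6-1} and then setting $z=K_2\delta$ gives, up to a numerical constant,
\begin{equation*}
D_{1,\delta}D_{2,\delta}^2 F(y,K_2\delta)
=\Delta^{1+\alpha}\sum_{\ell_1,\ell_2\ge1}
f(\lambda_{\ell_1,\ell_2}\Delta)
\sin\Bigl(\tfrac{\pi r\ell_1\sqrt\Delta}{2}\Bigr)
\sin^2\Bigl(\tfrac{\pi r\ell_2\sqrt\Delta}{2}\Bigr)
\sin\Bigl(\pi\ell_1\bigl(y+\tfrac{\delta}{2}\bigr)\Bigr)
\cos\bigl(\pi\ell_2(K_2+1)\delta\bigr).
\end{equation*}
The structure function $\sin(\pi rx_1/2)\sin^2(\pi rx_2/2)$ is of the same type as $g_7$, hence lies in $\widetilde{\boldsymbol G}_{1/2,1}(\mathbb R^2)\subset\boldsymbol G_{3/2}(\mathbb R^2)$ by \eqref{eq-6-3}, whereas $\sin(\pi\ell_1(y+\delta/2))$ and $\cos(\pi\ell_2(K_2+1)\delta)$ are bounded oscillating factors. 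I would first use \eqref{eq-lem1-1} with $\gamma=3/2$ and $\epsilon=\sqrt\Delta$ to replace $f(\lambda_{\ell_1,\ell_2}\Delta)$ by $f(\theta_2\pi^2(\ell_1^2+\ell_2^2)\Delta)$; the error thereby introduced, multiplied by $\Delta^{1+\alpha}$, is $O(\Delta^{1/2+\alpha})$ for every $\alpha\in(0,2)$ (it equals $\Delta^{1+\alpha}$ up to a logarithm when $\alpha\le3/2$ and $\Delta^{5/2}$ when $\alpha>3/2$). Then \eqref{eq-lem5-2} with $\gamma=3/2$, $\tau_1=\sin$ and $\tau_2=\cos$ bounds the remaining sum by a constant times $\Delta^{-1/2}$ divided by the product $\bigl((y+\tfrac{\delta}{2})\land(2-y-\tfrac{\delta}{2})\bigr)\bigl((K_2+1)\delta\land(2-(K_2+1)\delta)\bigr)$.

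The elementary observation that closes (1) is that, since $0\le K_2\le1/\delta$, one has $(K_2+1)\delta\in(\delta,1+\delta]$, so that $(K_2+1)\delta\land(2-(K_2+1)\delta)\gtrsim(K_2+1)\delta=(K_2+1)r\sqrt\Delta$ — treating separately the regime in which $(K_2+1)\delta$ is of order one — and likewise $(y+\delta/2)\land(2-y-\delta/2)\ge y\land(2-y-\delta)$. Multiplying back by $\Delta^{1+\alpha}$ then yields $O\bigl(\Delta^{\alpha}/((y\land(2-y-\delta))(K_2+1))\bigr)$, which is the assertion. Parts (2) and (3) follow the same path: expanding via \eqref{eq-6-5} in \emph{both} variables produces the structure function $\sin^2(\pi rx_1/2)\sin^2(\pi rx_2/2)$, of the type $g_9$, which lies in $\widetilde{\boldsymbol G}_{1,1}(\mathbb R^2)\subset\boldsymbol G_2(\mathbb R^2)$ by \eqref{eq-6-3}, so \eqref{eq-lem5-2} is now applied with $\gamma=2$ and returns a bound of order $O(1/(\cdots))$ rather than $O(\epsilon^{-1}/(\cdots))$. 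Combined with the prefactor $\Delta^{1+\alpha}$ and the same rearrangement of the denominators — the relevant arguments being $y+\delta$ and $(K_2+1)\delta$ for (2), and $(K_1+1)\delta$ and $(K_2+1)\delta$ for (3) — this gives the rates $\Delta^{1/2+\alpha}$ in (2) and $\Delta^{\alpha}$ in (3), while the replacement error from \eqref{eq-lem1-1} (now with $\gamma=2$ and $\alpha<2$) supplies the harmless $O(\Delta^{1+\alpha})$ term. The step demanding the most care is this last one: confirming that $(K_j+1)\delta\land(2-(K_j+1)\delta)\gtrsim(K_j+1)\delta$ holds uniformly over the full range $0\le K_j\le1/\delta$, and that every additive error coming from Lemmas~\ref{lem1} and~\ref{lem5} is genuinely of the stated order for all $\alpha\in(0,2)$; the rest is bookkeeping parallel to Lemma~\ref{lem7}.
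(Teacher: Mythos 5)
Your proposal is correct and takes essentially the same route as the paper: the paper's proof of Lemma \ref{lem8} simply reruns the arguments of Lemma \ref{lem7}(3)--(4) at the lattice points $z=K_2\delta$ (resp.\ $y=K_1\delta$), i.e.\ the expansion via \eqref{eq-6-4}--\eqref{eq-6-5}, the replacement step from Lemma \ref{lem1}, and the bound \eqref{eq-lem5-2} with $\gamma=3/2$ or $\gamma=2$. Your closing observation that $(K_j+1)\delta\land(2-(K_j+1)\delta)\gtrsim(K_j+1)\delta$ together with $\delta=r\sqrt{\Delta}$, trading a factor $\Delta^{1/2}$ for $(K_j+1)^{-1}$, is exactly the step displayed in the paper's proof of part (1).
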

\begin{proof}
In the same way as the proof of (3) in Lemma \ref{lem7}, 
we obtain the first statement:
\begin{align*}
|D_{1,\delta} D_{2,\delta}^2 F(y,K_2 \delta)|
&=
O\biggl(\frac{\Delta^{1/2+\alpha}}{(y \land (2-\delta-y)) (K_2+1)\delta}\biggr)
+O(\Delta^{1/2+\alpha})
\\
&=
O\biggl(\frac{\Delta^{\alpha}}{(y \land (2-\delta-y)) (K_2+1)}\biggr)
+O(\Delta^{1/2+\alpha}).
\end{align*}
Similarly, the rest of the statements can be shown by using (4) in Lemma \ref{lem7}.
\end{proof}

\subsection{Covariance of triple increments}
In this subsection, we consider the covariance of the triple increments $T_{i,j,k}X$
of SPDE \eqref{2d_spde} driven by the $Q_1$-Wiener process.
For $\alpha \in (0,2)$ and $J \ge 0$, define 
\begin{equation*}
f_\alpha(x)=\frac{1-\ee^{-x}}{x^{1+\alpha}},
\quad 
f_{J,\alpha}(x) = \frac{(1-\ee^{-x})^2}{x^{1+\alpha}}\ee^{-J x}, \quad x > 0.
\end{equation*}
We can easily verify that
$f_\alpha \in \boldsymbol F_{0,\alpha}(\mathbb R_{+})$ and
$f_{J,\alpha} \in \boldsymbol F_{J/2,\alpha-1}(\mathbb R_{+})$.
Moreover, by using $f_\alpha$, 
$F_\alpha$ given in \eqref{eq-1-1} can be expressed as
\begin{equation*}
F_\alpha(y,z)=
\Delta^{1+\alpha}
\sum_{\ell_1,\ell_2\ge1}
f_\alpha(\lambda_{\ell_1,\ell_2}\Delta) \cos(\pi \ell_1 y)\cos(\pi \ell_2 z).
\end{equation*}
We define
\begin{align*}
F_{j',k'}^{j,k} &=
\sum_{\ell_{1},\ell_{2}\ge1}
\frac{1-\ee^{-\lambda_{\ell_1,\ell_2}\Delta}}{\lambda_{\ell_1,\ell_2}^{1+\alpha}}
(e_{\ell_1}^{(1)}(\widetilde y_j)-e_{\ell_1}^{(1)}(\widetilde y_{j-1}))
(e_{\ell_2}^{(2)}(\widetilde z_k)-e_{\ell_2}^{(2)}(\widetilde z_{k-1}))
\\
&\qquad\times
(e_{\ell_1}^{(1)}(\widetilde y_{j'})-e_{\ell_1}^{(1)}(\widetilde y_{j'-1}))
(e_{\ell_2}^{(2)}(\widetilde z_{k'})-e_{\ell_2}^{(2)}(\widetilde z_{k'-1})),
\\
F_{J,j',k'}^{j,k} &=
\sum_{\ell_{1},\ell_{2}\ge1}
\frac{(1-\ee^{-\lambda_{\ell_1,\ell_2}\Delta})^2}{\lambda_{\ell_1,\ell_2}^{1+\alpha}}
\ee^{-\lambda_{\ell_1,\ell_2}J \Delta}
(e_{\ell_1}^{(1)}(\widetilde y_j)-e_{\ell_1}^{(1)}(\widetilde y_{j-1}))
(e_{\ell_2}^{(2)}(\widetilde z_k)-e_{\ell_2}^{(2)}(\widetilde z_{k-1}))
\\
&\qquad\times
(e_{\ell_1}^{(1)}(\widetilde y_{j'})-e_{\ell_1}^{(1)}(\widetilde y_{j'-1}))
(e_{\ell_2}^{(2)}(\widetilde z_{k'})-e_{\ell_2}^{(2)}(\widetilde z_{k'-1})).
\end{align*}

Let $r \in (0,\infty)$.
By the following two lemmas, 
$F_{j,k}$ in \eqref{eq-1-2} and the covariance of $T_{i,j,k}X$ are calculated.
\begin{lem}\label{lem9}
For $\alpha \in (0,2)$ and $\delta = r\sqrt{\Delta}$, it follows that
\begin{align}
F_{j',k'}^{j,k}
&= \ee^{-\kappa(\widetilde y_{j-1}+\widetilde y_{j'})/2}
\ee^{-\eta(\widetilde z_{k-1}+\widetilde z_{k'})/2}
D_{1,\delta}^2 D_{2,\delta}^2 
F_\alpha(\widetilde y_{j'-1}-\widetilde y_{j},\widetilde z_{k'-1}-\widetilde z_{k})
\nonumber
\\
&\qquad
+O(\Delta^{1+\alpha})
+O \Biggl(
\Delta^{1/2+\alpha}
\biggl(
\ind_{\{j \neq j'\}} \frac{1}{|j-j'|+1}
+ \ind_{\{k \neq k'\}} \frac{1}{|k-k'|+1}
\biggr) \Biggr).
\label{C1}
\end{align}
In particular, it holds that
\begin{itemize}
\item[(1)]
$F_{j,k}
= 4\ee^{-\kappa(\widetilde y_{j-1}+\widetilde y_j)/2}
\ee^{-\eta(\widetilde z_{k-1}+\widetilde z_k)/2}
D_{1,\delta} D_{2,\delta} F_\alpha (0,0) + O (\Delta^{1+\alpha})$,

\item[(2)]
$F_{j',k'}^{j,k}
=
O \Bigl(
\Delta^{\alpha}
\Bigl(\Delta + \frac{1}{(|j-j'|+1)(|k-k'|+1)} \Bigr) \Bigr)
+ O \Bigl(
\Delta^{1/2+\alpha}
\Bigl(
\ind_{\{j \neq j'\}} \frac{1}{|j-j'|+1}
+ \ind_{\{k \neq k'\}} \frac{1}{|k-k'|+1}
\Bigr) \Bigr)$,

\item[(3)]
$F_{J,j',k'}^{j,k}
=O \Bigl(
\frac{\Delta^{\alpha}}{J+1}
\Bigl(\Delta + \frac{1}{(|j-j'|+1)(|k-k'|+1)} \Bigr) \Bigr)
+ O \Bigl(
\frac{\Delta^{1/2+\alpha}}{J+1}
\Bigl(
\ind_{\{j \neq j'\}} \frac{1}{|j-j'|+1}
+ \ind_{\{k \neq k'\}} \frac{1}{|k-k'|+1}
\Bigr) \Bigr)$.
\end{itemize}
\end{lem}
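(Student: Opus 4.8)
The plan is to expand each of the four eigenfunction differences that build up $F_{j',k'}^{j,k}$, pull the weight $\ee^{-\kappa(\widetilde y_{j-1}+\widetilde y_{j'})/2}\ee^{-\eta(\widetilde z_{k-1}+\widetilde z_{k'})/2}$ out in a deliberately \emph{asymmetric} way, and thereby reduce everything to finite differences $D_{1,\delta}^{a}D_{2,\delta}^{b}F_\alpha$ of the cosine series $F_\alpha$ from \eqref{eq-1-1}, which are exactly what Lemmas \ref{lem6}--\ref{lem8} control. Concretely, write $e_{\ell_1}^{(1)}(\widetilde y_j)-e_{\ell_1}^{(1)}(\widetilde y_{j-1})=\sqrt 2\,\ee^{-\kappa\widetilde y_{j-1}/2}\bigl(\sin(\pi\ell_1\widetilde y_j)\ee^{-\kappa\delta/2}-\sin(\pi\ell_1\widetilde y_{j-1})\bigr)$ and $e_{\ell_1}^{(1)}(\widetilde y_{j'})-e_{\ell_1}^{(1)}(\widetilde y_{j'-1})=\sqrt 2\,\ee^{-\kappa\widetilde y_{j'}/2}\bigl(\sin(\pi\ell_1\widetilde y_{j'})-\sin(\pi\ell_1\widetilde y_{j'-1})\ee^{\kappa\delta/2}\bigr)$, and likewise in $z$; the four prefactors multiply to the stated weight, and since $\ee^{\pm\kappa\delta/2}=1\mp\tfrac{\kappa\delta}{2}+O(\delta^2)$ with $\delta=r\sqrt\Delta$, each remaining bracket is a trigonometric difference plus $O(\delta)$ times a single sine. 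Substituting into $F_{j',k'}^{j,k}$ and using $\tfrac{1-\ee^{-\lambda_{\ell_1,\ell_2}\Delta}}{\lambda_{\ell_1,\ell_2}^{1+\alpha}}=\Delta^{1+\alpha}f_\alpha(\lambda_{\ell_1,\ell_2}\Delta)$, the product of the four brackets generates $2^4$ sums, each of the type $\Delta^{1+\alpha}\sum_{\ell_1,\ell_2\ge1}f_\alpha(\lambda_{\ell_1,\ell_2}\Delta)\,(\text{product of }\sin(\pi\ell_i\delta/2))\,\tau_1(\pi\ell_1 w_1)\tau_2(\pi\ell_2 w_2)$ with $\tau_i\in\{\sin,\cos\}$, i.e. a finite difference of $F_\alpha$ evaluated at shifted arguments.

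For the zeroth-order term (all four brackets equal to their trigonometric differences), apply $\sin A-\sin B=2\cos\tfrac{A+B}{2}\sin\tfrac{A-B}{2}$ to each difference and the product-to-sum formula to $\cos(\pi\ell_1\bar y_j)\cos(\pi\ell_1\bar y_{j'})$ and $\cos(\pi\ell_2\bar z_k)\cos(\pi\ell_2\bar z_{k'})$; together with $D_{1,\delta}^2\cos(\pi\ell_1 y)=-4\sin^2(\pi\ell_1\delta/2)\cos(\pi\ell_1(y+\delta))$ and the evenness of $F_\alpha$ in each variable, the ``difference--difference'' piece is exactly $\ee^{-\kappa(\widetilde y_{j-1}+\widetilde y_{j'})/2}\ee^{-\eta(\widetilde z_{k-1}+\widetilde z_{k'})/2}D_{1,\delta}^2D_{2,\delta}^2F_\alpha(\widetilde y_{j'-1}-\widetilde y_j,\widetilde z_{k'-1}-\widetilde z_k)$, the leading term of \eqref{C1}. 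The three remaining pieces each carry an argument $\bar y_j+\bar y_{j'}$ or $\bar z_k+\bar z_{k'}$, which lies in $[2b,2-2b]$ and hence is bounded away from $0$ and $2$, while the other argument is either of the same ``sum'' type or is the difference argument, whose reduction to $(0,2)$ has modulus a multiple of $\delta$ of order $|j-j'|$ resp. $|k-k'|$. Lemma \ref{lem7}(4) and Lemma \ref{lem8}(2),(3) (and Lemma \ref{lem7}(1),(2) and Lemma \ref{lem6} in the degenerate cases $j=j'$ or $k=k'$, where $D_{1,\delta}^2F_\alpha(-\delta,\cdot)=2D_{1,\delta}F_\alpha(0,\cdot)$) then bound these by $O(\Delta^{1+\alpha})$ in the sum--sum case and by $O(\Delta^{1/2+\alpha}(|k-k'|+1)^{-1})$ resp. $O(\Delta^{1/2+\alpha}(|j-j'|+1)^{-1})$ in the mixed cases.

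The terms with exactly one $O(\delta)$ correction must be grouped into conjugate pairs. The pair that puts the correction on the $j$-bracket (times the $j'$-difference) and on the $j'$-bracket (times the $j$-difference) combines, after $\ee^{\pm\kappa\delta/2}=1\mp\tfrac{\kappa\delta}{2}+O(\delta^2)$, into $-\tfrac{\kappa\delta}{2}\bigl(\sin(\pi\ell_1\widetilde y_j)\sin(\pi\ell_1\widetilde y_{j'})-\sin(\pi\ell_1\widetilde y_{j-1})\sin(\pi\ell_1\widetilde y_{j'-1})\bigr)$ up to $O(\delta^2)$; after product-to-sum the two difference cosines cancel because $\widetilde y_j-\widetilde y_{j'}=\widetilde y_{j-1}-\widetilde y_{j'-1}$, leaving $\sin(\pi\ell_1\delta)$ times a sine at the bulk argument $\widetilde y_{j-1}+\widetilde y_{j'-1}$, i.e. a first difference $D_{1,2\delta}$ of $F_\alpha$ evaluated away from $0$ and $2$. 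Multiplying by the $z$-factor and invoking Lemma \ref{lem7} (and its $D_{1,\delta}D_{2,\delta}^2$ variant from the symmetry of $F_\alpha$, and Lemma \ref{lem8}) bounds this conjugate pair by $O(\delta)\cdot O(\Delta^{\alpha+1/2})=O(\Delta^{1+\alpha})$ when $k=k'$ and by $O(\Delta^{1/2+\alpha}(|k-k'|+1)^{-1})$ when $k\ne k'$; the symmetric $\eta$-correction contributes $O(\Delta^{1+\alpha})+O(\Delta^{1/2+\alpha}\ind_{\{j\ne j'\}}(|j-j'|+1)^{-1})$. Every term with two or more $O(\delta)$ corrections is $O(\delta^2)=O(\Delta)$ times a difference of $F_\alpha$ that is $O(\Delta^\alpha)$ uniformly by Lemma \ref{lem6}(4),(5), hence $O(\Delta^{1+\alpha})$. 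Collecting all contributions gives \eqref{C1}. The main obstacle is exactly this asymmetric factoring and the pairwise cancellation it produces: a naive symmetric splitting of the weight leaves an uncancelled error of order $O(\Delta^{1/2+\alpha})$, which already exceeds the remainder claimed in the diagonal case $j=j'$, $k=k'$.

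The refinements follow readily. For (1) put $j'=j$, $k'=k$ in \eqref{C1}: the indicator errors vanish, $\widetilde y_{j'-1}-\widetilde y_j=\widetilde z_{k'-1}-\widetilde z_k=-\delta$, and $D_{1,\delta}^2D_{2,\delta}^2F_\alpha(-\delta,-\delta)=4D_{1,\delta}D_{2,\delta}F_\alpha(0,0)$ by evenness. For (2), bound the leading term of \eqref{C1}: it is $O(\Delta^\alpha)$ when $j=j'$, $k=k'$ (by (1) together with Lemma \ref{lem6}), and otherwise the relevant argument of $D_{1,\delta}^2D_{2,\delta}^2F_\alpha$, reduced to $(0,2)$, is a multiple of $\delta$ of order $|j-j'|$ resp. $|k-k'|$, so Lemma \ref{lem8}(2),(3) (with Lemma \ref{lem7}(2) for the one-collapsed case) gives $O\bigl(\Delta^\alpha(\Delta+((|j-j'|+1)(|k-k'|+1))^{-1})\bigr)$, which combined with the error terms of \eqref{C1} yields (2). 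For (3) one repeats the argument with $f_\alpha$ replaced by $f_{J,\alpha}\in\boldsymbol F_{J/2,\alpha-1}(\mathbb R_{+})$; the extra factor $\ee^{-\lambda_{\ell_1,\ell_2}J\Delta}$ confines the effective range of $(\ell_1,\ell_2)$, so the analogues of Lemmas \ref{lem1} and \ref{lem5} (hence of Lemmas \ref{lem6}--\ref{lem8}) for $f_{J,\alpha}$ carry the additional decay factor $(J+1)^{-1}$, which propagates into every estimate above.
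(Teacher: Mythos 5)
Your argument is correct and ends up on the same skeleton as the paper's proof -- reduce $F_{j',k'}^{j,k}$ to finite differences of $F_\alpha$ at the ``difference'' argument $(\widetilde y_{j'-1}-\widetilde y_j,\widetilde z_{k'-1}-\widetilde z_k)$ (the main term) and at ``sum'' arguments lying in $[2b,2-2b]$, then apply Lemmas \ref{lem6}--\ref{lem8} -- but the algebra is organized differently. The paper starts from identity (22) of Hildebrandt and Trabs \cite{Hildebrandt_Trabs2021}, which writes each product of weighted increments \emph{exactly} as three terms with the factor $g_1(\delta)=\ee^{-\kappa\delta/2}$ kept intact in front of the difference-type term; multiplying the $y$- and $z$-identities gives nine terms, the coefficient $g_1(\delta)g_2(\delta)$ of the leading term is absorbed into the stated weight, and no Taylor expansion or cancellation is ever needed. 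You instead factor the weight asymmetrically (left endpoints for $j,k$, right endpoints for $j',k'$), expand $\ee^{\pm\kappa\delta/2}$, and recover the same structure through the conjugate-pair cancellation of the first-order terms; your identity $\sin(\pi\ell_1\widetilde y_j)\sin(\pi\ell_1\widetilde y_{j'})-\sin(\pi\ell_1\widetilde y_{j-1})\sin(\pi\ell_1\widetilde y_{j'-1})$ collapsing (after product-to-sum and $\widetilde y_j-\widetilde y_{j'}=\widetilde y_{j-1}-\widetilde y_{j'-1}$) to a step-$2\delta$ difference at the bulk argument is the correct mechanism, and it plays exactly the role that the exact identity plays in the paper. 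The price of your route is some extra bookkeeping: the step-$2\delta$ differences need trivially adapted versions of Lemmas \ref{lem7}--\ref{lem8} (or a split into two step-$\delta$ differences), and your blanket claim that every term with two or more $O(\delta)$ corrections is ``$O(\Delta)$ times an $O(\Delta^\alpha)$ difference of $F_\alpha$'' is not literally how the worst cases work -- when corrections hit both brackets of one (or both) variables no difference survives in that variable, and one must instead use the uniform bounds $F_\alpha=O(\Delta^{\alpha-1})$ from Lemma \ref{lem6}(1) or \eqref{eq-lem1-2} directly; the conclusion $O(\Delta^{1+\alpha})$ still holds for every $\alpha\in(0,2)$, so this is an imprecision rather than a gap. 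Parts (1)--(3) are treated as in the paper; in particular your argument for (3), that $f_{J,\alpha}\in\boldsymbol F_{J/2,\alpha-1}(\mathbb R_{+})$ makes $(J+1)f_{J,\alpha}$ obey the $\boldsymbol F_{0,\alpha}$-type bounds so that the factor $(J+1)^{-1}$ propagates through all the estimates, is precisely the paper's.
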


\begin{proof}
Let $g_1(x)=\ee^{-\kappa x/2}$ and $g_2(x)=\ee^{-\eta x/2}$.
It follows from (22) in Hildebrandt and Trabs \cite{Hildebrandt_Trabs2021} that
\begin{align}
&\ee^{\kappa y/2}(e_{\ell_1}^{(1)}(y+\delta)-e_{\ell_1}^{(1)}(y))
\ee^{\kappa y'/2}(e_{\ell_1}^{(1)}(y'+\delta)-e_{\ell_1}^{(1)}(y'))
\nonumber
\\
&=
D_\delta^2g_1(0)\cos(\pi\ell_1(y'-y))
-D_\delta^2 \bigl[g_1(\cdot)\cos(\pi \ell_1 (y+y'+\cdot))\bigr](0)
\nonumber
\\
&\qquad
-g_1(\delta) D_\delta^2\bigl[\cos(\pi \ell_1 (y'-y-\delta+\cdot))\bigr](0).
\label{eq-7-1}
\end{align}
For $y',z'\in [0,2)$, define
\begin{align*}
G_{0}(y,z:y',z') &= g_1(y)g_2(z)F_\alpha(y+y',z+z'),
\\
G_{1}(y,z:y',z') &= g_1(y)F_\alpha(y+y',z+z'),
\\
G_{2}(y,z:y',z') &= g_2(z)F_\alpha(y+y',z+z').
\end{align*}
Since
\begin{align*}
D_{1,\delta} D_{2,\delta}^j F_\alpha(0,0) &= 
\sum_{\ell_1,\ell_2\ge1}
\frac{1-\ee^{-\lambda_{\ell_1,\ell_2}\Delta}}{\lambda_{\ell_1,\ell_2}^{1+\alpha}}
D_\delta[\cos(\pi \ell_1 \cdot)](0) D_\delta^j [\cos(\pi \ell_2 \cdot)](0),
\\
D_{1,\delta}^2 D_{2,\delta}^j G_1(0,0:y',0) &= 
\sum_{\ell_1,\ell_2\ge1}
\frac{1-\ee^{-\lambda_{\ell_1,\ell_2}\Delta}}{\lambda_{\ell_1,\ell_2}^{1+\alpha}}
D_\delta^2[g_1(\cdot)\cos(\pi \ell_1 (y'+\cdot))](0)
D_\delta^j [\cos(\pi \ell_2 \cdot)](0),
\\
D_{1,\delta}^2 D_{2,\delta}^2 G_0(0,0:y',z') &= 
\sum_{\ell_1,\ell_2\ge1}
\frac{1-\ee^{-\lambda_{\ell_1,\ell_2}\Delta}}{\lambda_{\ell_1,\ell_2}^{1+\alpha}}
D_\delta^2[g_1(\cdot)\cos(\pi \ell_1 (y'+\cdot))](0)
D_\delta^2[g_2(\cdot)\cos(\pi \ell_2 (z'+\cdot))](0),
\end{align*}
it follows from \eqref{eq-7-1} that
\begin{align*}
&\ee^{\kappa (\widetilde y_{j-1}+\widetilde y_{j'-1})/2}
\ee^{\eta (\widetilde z_{k-1}+\widetilde z_{k'-1})/2}
F^{j,k}_{j',k'}
\nonumber
\\
&=
\sum_{\ell_1, \ell_2 \ge 1}
\frac{1-\ee^{-\lambda_{\ell_1,\ell_2}\Delta}}
{\lambda_{\ell_1,\ell_2}^{1+\alpha}}
\nonumber
\\
&\qquad\qquad\times
\ee^{\kappa \widetilde y_{j-1}/2}
(e_{\ell_1}^{(1)}(\widetilde y_j)-e_{\ell_1}^{(1)}(\widetilde y_{j-1}))
\ee^{\kappa \widetilde y_{j'-1}/2}
(e_{\ell_1}^{(1)}(\widetilde y_{j'})-e_{\ell_1}^{(1)}(\widetilde y_{j'-1}))
\nonumber
\\
&\qquad\qquad\times
\ee^{\eta \widetilde z_{k-1}/2}
(e_{\ell_2}^{(2)}(\widetilde z_k)-e_{\ell_2}^{(2)}(\widetilde z_{k-1}))
\ee^{\eta \widetilde z_{k'-1}/2}
(e_{\ell_2}^{(2)}(\widetilde z_{k'})-e_{\ell_2}^{(2)}(\widetilde z_{k'-1}))
\nonumber
\\
&= 
\sum_{\ell_1, \ell_2 \ge 1}
\frac{1-\ee^{-\lambda_{\ell_1,\ell_2}\Delta}}
{\lambda_{\ell_1,\ell_2}^{1+\alpha}}
\nonumber
\\
&\qquad\qquad
\times
\bigl(
D_\delta^2g_1(0)\cos(\pi\ell_1(\widetilde y_{j'-1}-\widetilde y_{j-1}))
\nonumber
\\
&\qquad\qquad\qquad
-D_\delta^2 \bigl[g_1(\cdot)\cos(\pi \ell_1 
(\widetilde y_{j-1}+\widetilde y_{j'-1}+\cdot))\bigr](0)
\nonumber
\\
&\qquad\qquad\qquad
-g_1(\delta) D_\delta^2\bigl[\cos(\pi \ell_1 
(\widetilde y_{j'-1}-\widetilde y_{j}+\cdot))\bigr](0)
\bigr)
\nonumber
\\
&\qquad\qquad
\times
\bigl(
D_\delta^2 g_2(0)\cos(\pi\ell_2(\widetilde z_{k'-1}-\widetilde z_{k-1}))
\nonumber
\\
&\qquad\qquad\qquad
-D_\delta^2 \bigl[g_2(\cdot)\cos(\pi \ell_2 
(\widetilde z_{k-1}+\widetilde z_{k'-1}+\cdot))\bigr](0)
\nonumber
\\
&\qquad\qquad\qquad
-g_2(\delta) D_\delta^2\bigl[\cos(\pi \ell_2 
(\widetilde z_{k'-1}-\widetilde z_{k}+\cdot))\bigr](0)
\bigr)
\nonumber
\\
&=
D_\delta^2 g_1(0) D_\delta^2 g_2(0) 
F_\alpha(\widetilde y_{j'-1}-\widetilde y_{j-1},\widetilde z_{k'-1}-\widetilde z_{k-1})
\nonumber
\\
&\qquad-
D_\delta^2 g_1(0)
\Bigl(
D_{2,\delta}^2
G_2(0,0:\widetilde y_{j'-1}-\widetilde y_{j-1},\widetilde z_{k-1}+\widetilde z_{k'-1})
\nonumber
\\
&\qquad\qquad
+g_2(\delta)D_{2,\delta}^2 
F_\alpha(\widetilde y_{j'-1}-\widetilde y_{j-1},\widetilde z_{k'-1}-\widetilde z_{k})
\Bigr)
\nonumber
\\
&\qquad-
D_\delta^2 g_2(0)
\Bigl(
D_{1,\delta}^2 
G_1(0,0:\widetilde y_{j-1}+\widetilde y_{j'-1},\widetilde z_{k'-1}-\widetilde z_{k-1})
\nonumber
\\
&\qquad\qquad
+g_1(\delta)
D_{1,\delta}^2 
F_\alpha(\widetilde y_{j'-1}-\widetilde y_{j},\widetilde z_{k'-1}-\widetilde z_{k-1})
\Bigr)
\nonumber
\\
&\qquad+
D_{1,\delta}^2D_{2,\delta}^2 
G_0(0,0:\widetilde y_{j-1}+\widetilde y_{j'-1},\widetilde z_{k-1}+\widetilde z_{k'-1})
\nonumber
\\
&\qquad
+g_2(\delta) 
D_{1,\delta}^2 D_{2,\delta}^2 
G_1(0,0:\widetilde y_{j-1}+\widetilde y_{j'-1},\widetilde z_{k'-1}-\widetilde z_{k})
\nonumber
\\
&\qquad
+g_1(\delta) D_{1,\delta}^2 D_{2,\delta}^2 
G_2(0,0:\widetilde y_{j'-1}-\widetilde y_{j},\widetilde z_{k-1}+\widetilde z_{k'-1})
\nonumber
\\
&\qquad
+g_1(\delta)g_2(\delta) D_{1,\delta}^2 D_{2,\delta}^2 
F_\alpha(\widetilde y_{j'-1}-\widetilde y_{j},\widetilde z_{k'-1}-\widetilde z_{k}).
\end{align*}
Note that $D_\delta^\ell g_1(0) = O(\Delta^{\ell/2})$,
$\widetilde y_{j}+\widetilde y_{j'}\in (b,2-b)$
and $\widetilde y_{j}-\widetilde y_{j'}=(j-j')\delta$. 
It holds from Lemma \ref{lem6} that
\begin{equation*}
F_\alpha(\widetilde y_{j'-1}-\widetilde y_{j-1},\widetilde z_{k'-1}-\widetilde z_{k-1})
= O(\Delta^{\alpha-1}),
\quad
D_{1,\delta}^2 
F_\alpha(\widetilde y_{j'-1}-\widetilde y_{j},\widetilde z_{k'-1}-\widetilde z_{k-1})
=O(\Delta^\alpha)
\end{equation*}
uniformly in $j,j',k,k'$, and we see from 
\begin{align*}
D_{1,\delta}^2 D_{2,\delta}^\ell G_1(0,0:y',z')
&=D_\delta^2 g_1(0) D_{2,\delta}^\ell F_\alpha(2\delta+y',z')
+2D_\delta g_1(0) D_{1,\delta} D_{2,\delta}^\ell F_\alpha(\delta+y',z')
\\
&\qquad
+g_1(0) D_{1,\delta}^2 D_{2,\delta}^\ell F_\alpha(y',z'),
\end{align*}
$D_{2,\delta}^2 F_\alpha(y,-\delta) = 2 D_{2,\delta} F_\alpha(y,0)$ and  
Lemmas \ref{lem6}-\ref{lem8} that
\begin{align*}
D_{1,\delta}^2 
G_1(0,0:\widetilde y_{j-1}+\widetilde y_{j'-1},\widetilde z_{k'-1}-\widetilde z_{k-1})
=O(\Delta^\alpha)
\end{align*}
uniformly in $j,j',k,k'$, and
\begin{align*}
&D_{1,\delta}^2 D_{2,\delta}^2 
G_1(0,0:\widetilde y_{j-1}+\widetilde y_{j'-1},\widetilde z_{k'-1}-\widetilde z_{k})
=O(\Delta^{1+\alpha})
+O\biggl(
\ind_{\{k \neq k'\}}\frac{\Delta^{1/2 +\alpha}}{|k-k'|+1}
\biggr)
\end{align*}
uniformly in $j,j'$.
Moreover, noting that
\begin{align*}
&D_{1,\delta}^2 D_{2,\delta}^2 G_0(0,0:y',z')
\\
&=
D_\delta^2 g_1(0)
\Bigl\{
D_\delta^2 g_2(0)F_\alpha(2\delta+y',2\delta+z')
\\
&\qquad\qquad
+2D_\delta g_2(0) D_{2,\delta} F_\alpha(2\delta+y',\delta+z')
+g_2(0) D_{2,\delta}^2 F_\alpha(2\delta+y',z')
\Bigr\}
\\
&\qquad+
2D_\delta g_1(0)
\Bigl\{
D_\delta^2 g_2(0) D_{1,\delta} F_\alpha(\delta+y',2\delta+z')
\\
&\qquad\qquad
+2D_\delta g_2(0) D_{1,\delta}D_{2,\delta} F_\alpha(\delta+y',\delta+z')
+g_2(0) 
D_{1,\delta} D_{2,\delta}^2 F_\alpha(\delta+y',z')
\Bigr\}
\\
&\qquad+
g_1(0)
\Bigl\{
D_\delta^2 g_2(0)
D_{1,\delta}^2 F_\alpha(y',2\delta+z')
\\
&\qquad\qquad
+2D_\delta g_2(0) D_{1,\delta}^2 D_{2,\delta} F_\alpha(y',\delta+z')
+g_2(0) D_{1,\delta}^2 D_{2,\delta}^2 F_\alpha(y',z')
\Bigr\},
\end{align*}
one has
\begin{equation*}
D_{1,\delta}^2D_{2,\delta}^2 
G_0(0,0:\widetilde y_{j-1}+\widetilde y_{j'-1},\widetilde z_{k-1}+\widetilde z_{k'-1})
=O(\Delta^{1+\alpha})
\end{equation*}
uniformly in $j,j',k,k'$.
Therefore, we obtain
\begin{align*}
&\ee^{\kappa (\widetilde y_{j-1}+\widetilde y_{j'-1})/2}
\ee^{\eta (\widetilde z_{k-1}+\widetilde z_{k'-1})/2}
F^{j,k}_{j',k'}
\\
&=
\ee^{-\kappa \delta/2}\ee^{-\eta \delta/2} D_{1,\delta}^2 D_{2,\delta}^2 
F_\alpha(\widetilde y_{j'-1}-\widetilde y_{j},\widetilde z_{k'-1}-\widetilde z_{k})
\\
&\qquad
+O(\Delta^{1+\alpha})
+O \Biggl(
\Delta^{1/2+\alpha}
\biggl(
\ind_{\{j \neq j'\}} \frac{1}{|j-j'|+1}
+ \ind_{\{k \neq k'\}} \frac{1}{|k-k'|+1}
\biggr) \Biggr),
\end{align*}
which together with $\delta=\widetilde y_{j'}-\widetilde y_{j'-1}
=\widetilde z_{k'}-\widetilde z_{k'-1}$ gives the desired result.

(1) 
By $F_{j,k}=F_{j,k}^{j,k}$ and 
$D_{1,\delta}^2 D_{2,\delta}^2 F_\alpha(-\delta,-\delta) 
= 4D_{1,\delta} D_{2,\delta} F_\alpha(0,0)$, we complete the proof.

(2) Since it follows from 
\eqref{eq-1-6} and Lemmas \ref{lem7} and \ref{lem8} that
\begin{align*}
D_{1,\delta}^2 D_{2,\delta}^2 
F_\alpha(\widetilde y_{j'-1}-\widetilde y_{j},\widetilde z_{k'-1}-\widetilde z_{k})
=O(\Delta^{1+\alpha})
+O\biggl(\frac{\Delta^\alpha}{(|j-j'|+1)(|k-k'|+1)}\biggr),
\end{align*}
\eqref{C1} yields the desired result.

(3) For $f \in \boldsymbol F_{J,\alpha-1}(\mathbb R_{+})$, one has 
\begin{equation*}
(J+1)x^\alpha f(x),\ (J+1)x^{1+\alpha} f'(x),\ (J+1)x^{2+\alpha} f''(x)
\lesssim (J+1)x \ee^{-J x} \lesssim 1
\quad(x \to 0),
\end{equation*}
\begin{equation*}
(J+1)x^{1+\alpha} f(x),\ (J+1)x^{2+\alpha} f'(x),\ (J+1)x^{3+\alpha} f''(x)
\lesssim (J+1) \ee^{-J x} \lesssim 1
\quad(x \to \infty)
\end{equation*}
and thus 
we see that
$(J+1) f \in \boldsymbol F_{0,\alpha}(\mathbb R_{+})$. 
Therefore, it holds from 
$f_{J,\alpha} \in \boldsymbol F_{J/2,\alpha-1}(\mathbb R_{+})$ that
\begin{align*}
(J+1)F_{J,j',k'}^{j,k} &=
\Delta^{1+\alpha} \sum_{\ell_{1},\ell_{2}\ge1}
(J+1)f_{J,\alpha}(\lambda_{\ell_1,\ell_2}\Delta)
(e_{\ell_1}^{(1)}(\widetilde y_j)-e_{\ell_1}^{(1)}(\widetilde y_{j-1}))
(e_{\ell_2}^{(2)}(\widetilde z_k)-e_{\ell_2}^{(2)}(\widetilde z_{k-1}))
\\
&\qquad\times
(e_{\ell_1}^{(1)}(\widetilde y_{j'})-e_{\ell_1}^{(1)}(\widetilde y_{j'-1}))
(e_{\ell_2}^{(2)}(\widetilde z_{k'})-e_{\ell_2}^{(2)}(\widetilde z_{k'-1}))
\\
&=
O \biggl(
\Delta^{\alpha}
\biggl(\Delta + \frac{1}{(|j-j'|+1)(|k-k'|+1)} \biggr) \biggr)
\\
&\qquad
+ O \biggl(
\Delta^{1/2+\alpha}
\biggl(
\ind_{\{j \neq j'\}} \frac{1}{|j-j'|+1}
+ \ind_{\{k \neq k'\}} \frac{1}{|k-k'|+1}
\biggr) \biggr)
\end{align*}
in the same way as (2).
\end{proof}

\begin{lem}\label{lem10}
It holds that
\begin{equation*}
\cov[T_{i,j,k}X, T_{i',j',k'}X]
= \sigma^2 \times 
\begin{cases}
F_{j',k'}^{j,k}-\frac{1}{2}F_{2(i-1),j',k'}^{j,k}, & i = i',
\\
-\frac{1}{2}(F_{|i-i'|-1,j',k'}^{j,k} + F_{i+i'-2,j',k'}^{j,k}), & i \neq i'.
\end{cases}
\end{equation*}
In particular, it follows that for $\alpha \in (0,2)$ and $\delta=r \sqrt{\Delta}$,
\begin{align}
\cov[T_{i,j,k}X, T_{i',j',k'}X]
&=
O \biggl(
\frac{\Delta^{\alpha}}{|i-i'|+1}
\biggl(
\Delta
+ \frac{1}{(|j-j'|+1)(|k-k'|+1)}
\biggr) \biggr)
\nonumber
\\
&\qquad
+O \biggl(
\frac{\Delta^{\alpha+1/2}}{|i-i'|+1}
\biggl(
\ind_{\{j \neq j'\}} \frac{1}{|j-j'|+1}
+ \ind_{\{k \neq k'\}} \frac{1}{|k-k'|+1}
\biggr) \biggr)
\label{C2}
\end{align}
and thus
\begin{equation*}
\sum_{k,k'=1}^{m_2} \sum_{j,j'=1}^{m_1} \sum_{i,i'=1}^N 
\cov[T_{i,j,k}X, T_{i',j',k'}X]^2
=O(m N \Delta^{2\alpha}).
\end{equation*}
\end{lem}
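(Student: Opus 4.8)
The plan is to prove the three assertions in turn, the first being an exact Gaussian computation and the last two being estimates built on Lemma \ref{lem9}.

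For the covariance identity, I would start from the spectral expansion $T_{i,j,k}X = \sum_{\ell_1,\ell_2\ge1}(A_{i,\ell_1,\ell_2}+B_{i,\ell_1,\ell_2})(e_{\ell_1}^{(1)}(\widetilde y_j)-e_{\ell_1}^{(1)}(\widetilde y_{j-1}))(e_{\ell_2}^{(2)}(\widetilde z_k)-e_{\ell_2}^{(2)}(\widetilde z_{k-1}))$; since the $A_{i,\ell_1,\ell_2}$ are deterministic and the $\{B_{i,\ell_1,\ell_2}\}_{\ell_1,\ell_2}$ are independent in $(\ell_1,\ell_2)$ with mean zero, the covariance reduces to $\sum_{\ell_1,\ell_2\ge1}\EE[B_{i,\ell_1,\ell_2}B_{i',\ell_1,\ell_2}]$ times the product of the four spatial differences. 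For $i=i'$ the needed second moment is the one already computed in the proof of Proposition \ref{prop1}, and recognising the two resulting sums yields $\sigma^2(F_{j',k'}^{j,k}-\tfrac12 F_{2(i-1),j',k'}^{j,k})$. For $i\neq i'$, say $i>i'$, I would write $B=B^{(1)}+B^{(2)}$ and note that $B_{i',\ell_1,\ell_2}^{(2)}$ is an It\^o integral over $((i'-1)\Delta,i'\Delta)$, so $\EE[B_{i}^{(2)}B_{i'}^{(1)}]$ and $\EE[B_{i}^{(2)}B_{i'}^{(2)}]$ vanish by disjointness of supports, leaving only $\EE[B_{i}^{(1)}B_{i'}^{(1)}]$ and $\EE[B_{i}^{(1)}B_{i'}^{(2)}]$; evaluating these two integrals by the It\^o isometry and simplifying, several exponentials cancel and one is left with $\EE[B_{i,\ell_1,\ell_2}B_{i',\ell_1,\ell_2}]=-\tfrac{\sigma^2(1-\ee^{-\lambda_{\ell_1,\ell_2}\Delta})^2}{2\lambda_{\ell_1,\ell_2}^{1+\alpha}}(\ee^{-\lambda_{\ell_1,\ell_2}(|i-i'|-1)\Delta}+\ee^{-\lambda_{\ell_1,\ell_2}(i+i'-2)\Delta})$. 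Summing over $(\ell_1,\ell_2)$ and using symmetry of $\cov$ then gives the stated identity. This Gaussian bookkeeping — tracking which of the pieces $B^{(1)},B^{(2)}$ at two time indices overlap and obtaining the clean cancellation — is the one step where care is really required; the rest is routine.

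The bound \eqref{C2} then follows by feeding the identity into Lemma \ref{lem9}. For $i=i'$, Lemma \ref{lem9}(2) controls $F_{j',k'}^{j,k}$, while Lemma \ref{lem9}(3) with $J=2(i-1)$ shows the $F_{2(i-1),j',k'}^{j,k}$ term is no larger, so \eqref{C2} holds with $|i-i'|+1=1$. For $i\neq i'$, Lemma \ref{lem9}(3) applied with $J=|i-i'|-1$ and with $J=i+i'-2$ yields prefactors $1/|i-i'|$ and $1/(i+i'-1)$; since $|i-i'|\ge1$ and $\min(i,i')\ge1$ one has $i+i'-1\ge|i-i'|+1$ and $|i-i'|\ge\tfrac12(|i-i'|+1)$, so both prefactors are $\lesssim 1/(|i-i'|+1)$, which is exactly the form of \eqref{C2}.

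Finally, for the summed estimate I would square \eqref{C2}, use $(a+b)^2\le2a^2+2b^2$, and note that each resulting term factors into one-dimensional sums over $i,i'$, over $j,j'$ and over $k,k'$. Using $\sum_{i,i'=1}^N(|i-i'|+1)^{-2}=O(N)$ and the analogous $O(m_1)$, $O(m_2)$ for the spatial indices, together with the trivial bounds $\sum 1=m_1^2$ or $m_2^2$, and then invoking $N\Delta=1$, $m=O(N)$, $N=O(m)$ and $m_1=m_2$ (so $m_1=m_2\asymp\sqrt N$), a short check shows that every term is $O(mN\Delta^{2\alpha})$: the dominant one is the purely power-law contribution $\Delta^{2\alpha}\sum_{i,i'}\sum_{j,j'}\sum_{k,k'}\{(|i-i'|+1)^2(|j-j'|+1)^2(|k-k'|+1)^2\}^{-1}=O(mN\Delta^{2\alpha})$, and the remaining ones are of strictly smaller order.
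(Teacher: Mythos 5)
Your proposal is correct and follows essentially the same route as the paper: the exact Gaussian computation of $\cov[B_{i,\ell_1,\ell_2},B_{i',\ell_1,\ell_2}]$ (which the paper dispatches as ``a simple calculation''), then Lemma \ref{lem9}(2)--(3) with $J=2(i-1)$, $J=|i-i'|-1$, $J=i+i'-2$ to get \eqref{C2}, and finally the same separable summation bounds over $i,i'$, $j,j'$, $k,k'$ for the squared sum. Your bookkeeping of the dominant and subdominant terms in the last step matches the paper's $O(1)+O(\sqrt{\Delta})$ computation.
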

\begin{proof}
By a simple calculation, we have
\begin{align*}
& \cov\bigl[x_{\ell_1,\ell_2}(t_i)-x_{\ell_1,\ell_2}(t_{i-1}),
x_{\ell_1,\ell_2}(t_{i'})-x_{\ell_1,\ell_2}(t_{{i'}-1})
\bigr]
\\
&= \cov[A_{i,\ell_1,\ell_2}+B_{i,\ell_1,\ell_2},
A_{i',\ell_1,\ell_2}+B_{i',\ell_1,\ell_2}]
\\
&= \cov[B_{i,\ell_1,\ell_2}, B_{i',\ell_1,\ell_2}]
\\
&= 
\sigma^2
\times
\begin{cases}
\frac{1-\ee^{-\lambda_{\ell_1,\ell_2}\Delta}}{\lambda_{\ell_1,\ell_2}^{1+\alpha}}
\Bigl(
1-\frac{1-\ee^{-\lambda_{\ell_1,\ell_2}\Delta}}{2}
\ee^{-2\lambda_{\ell_1,\ell_2}(i-1)\Delta}
\Bigr),
& i = i',
\\
-\frac{(1-\ee^{-\lambda_{\ell_1,\ell_2}\Delta})^2}{2\lambda_{\ell_1,\ell_2}^{1+\alpha}}
(\ee^{-\lambda_{\ell_1,\ell_2}(|i'-i|-1)\Delta} 
+ \ee^{-\lambda_{\ell_1,\ell_2}(i'+i-2)\Delta}),
& i \neq i'
\end{cases}
\end{align*}
and hence
\begin{align*}
\cov[T_{i,j,k}X, T_{i',j',k'}X]  
&=
\sum_{\ell_1, \ell_2 \ge 1}
\cov\bigl[x_{\ell_1,\ell_2}(t_i)-x_{\ell_1,\ell_2}(t_{i-1}),
x_{\ell_1,\ell_2}(t_{i'})-x_{\ell_1,\ell_2}(t_{{i'}-1})
\bigr]
\nonumber
\\
&\qquad\times
(e_{\ell_1}^{(1)}(\widetilde y_j)-e_{\ell_1}^{(1)}(\widetilde y_{j-1}))
(e_{\ell_2}^{(2)}(\widetilde z_k)-e_{\ell_2}^{(2)}(\widetilde z_{k-1}))
\nonumber
\\
&\qquad\times
(e_{\ell_1}^{(1)}(\widetilde y_{j'})-e_{\ell_1}^{(1)}(\widetilde y_{j'-1}))
(e_{\ell_2}^{(2)}(\widetilde z_{k'})-e_{\ell_2}^{(2)}(\widetilde z_{k'-1}))
\nonumber
\\
&=
\sigma^2 \times 
\begin{cases}
F_{j',k'}^{j,k}-\frac{1}{2}F_{2(i-1),j',k'}^{j,k}, & i = i',
\\
-\frac{1}{2}(F_{|i-i'|-1,j',k'}^{j,k} + F_{i+i'-2,j',k'}^{j,k}), & i \neq i'.
\end{cases}
\end{align*}
Since it follows from Lemma \ref{lem9} that 
\begin{align*}
F_{j',k'}^{j,k}-\frac{1}{2}F_{2(i-1),j',k'}^{j,k}
&= 
O \biggl(
\Delta^{\alpha}
\biggl(\Delta + \frac{1}{(|j-j'|+1)(|k-k'|+1)} \biggr) \biggr)
\\
&\qquad 
+ O \biggl(
\Delta^{1/2+\alpha}
\biggl(
\ind_{\{j \neq j'\}} \frac{1}{|j-j'|+1}
+ \ind_{\{k \neq k'\}} \frac{1}{|k-k'|+1}
\biggr) \biggr),
\\
F_{|i-i'|-1,j',k'}^{j,k} + F_{i+i'-2,j',k'}^{j,k}
&= 
O \biggl(
\frac{\Delta^{\alpha}}{|i-i'|+1}
\biggl(\Delta + \frac{1}{(|j-j'|+1)(|k-k'|+1)} \biggr) \biggr)
\\
&\qquad 
+ O \biggl(
\frac{\Delta^{1/2+\alpha}}{|i-i'|+1}
\biggl(
\ind_{\{j \neq j'\}} \frac{1}{|j-j'|+1}
+ \ind_{\{k \neq k'\}} \frac{1}{|k-k'|+1}
\biggr) \biggr)
\end{align*}
uniformly in $j,k,j',k'$, we obtain \eqref{C2}.
Furthermore, we have
\begin{align*}
&\frac{1}{m N\Delta^{2\alpha}}
\sum_{k,k'=1}^{m_2}\sum_{j,j'=1}^{m_1}\sum_{i,i' =1}^N 
\cov[T_{i,j,k}X, T_{i',j',k'}X]^2
\\
&=
O\Biggl( 
\frac{1}{N}\sum_{i,i'=1}^N \frac{1}{(|i-i'|+1)^2}
\biggl(
m\Delta^2
+\frac{1}{m_1}\sum_{j,j'=1}^{m_1} \frac{1}{(|j-j'|+1)^2}
\frac{1}{m_2}\sum_{k,k'=1}^{m_2} \frac{1}{(|k-k'|+1)^2}
\biggr)
\Biggr)
\\
&\qquad
+O\Biggl( 
\frac{1}{N}\sum_{i,i'=1}^N \frac{1}{(|i-i'|+1)^2}
\biggl(
m_2\Delta
\times\frac{1}{m_1}\sum_{j,j'=1}^{m_1} \frac{1}{(|j-j'|+1)^2}
\\
&\qquad\qquad
+m_1\Delta
\times \frac{1}{m_2}\sum_{k,k'=1}^{m_2} \frac{1}{(|k-k'|+1)^2}
\biggr)
\Biggr)
\\
&=O(1)+O(\sqrt{\Delta})
\\
&=O(1).
\end{align*}
\end{proof}

\subsection{Auxiliary results for consistency of the estimator}
The following lemmas are used to show the consistency of the estimator.
\begin{lem}\label{lem11}
Let $\alpha \in [1,2)$ and $r \in (0,\infty)$. 
For $\psi_{r,\alpha}$ given in \eqref{psi}, 
the function
\begin{equation}\label{eq-b01}
\theta_2 \mapsto \frac{\psi_{r,\alpha}(\theta_2)}{\psi_{r/\sqrt{2},\alpha}(\theta_2)}
\end{equation}
is injective for $\theta_2>0$.
\end{lem}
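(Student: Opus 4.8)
The plan is to reduce the injectivity to a one–variable monotonicity statement, then reduce that in turn to the monotonicity of a logarithmic derivative.

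First I would rewrite $\psi_{r,\alpha}$ using the identity obtained inside the proof of Proposition~\ref{prop1}, namely
\begin{equation*}
\psi_{r,\alpha}(\theta_2)=4\iint_{\mathbb R_{+}^2}f_\alpha\bigl(\theta_2\pi^2(x^2+y^2)\bigr)(\cos(\pi r x)-1)(\cos(\pi r y)-1)\,\dd x\,\dd y,\qquad f_\alpha(z)=\frac{1-\ee^{-z}}{z^{1+\alpha}},
\end{equation*}
in which the integrand is nonnegative. Substituting $(x,y)\mapsto(\sqrt2\,x,\sqrt2\,y)$ in the corresponding integral for $\psi_{r/\sqrt2,\alpha}$ gives $\psi_{r,\alpha}(\theta_2)=\Psi(\theta_2)$ and $\psi_{r/\sqrt2,\alpha}(\theta_2)=2\Psi(2\theta_2)$, where $\Psi(\tau)=4\iint_{\mathbb R_{+}^2}f_\alpha(\tau\pi^2(x^2+y^2))(\cos(\pi r x)-1)(\cos(\pi r y)-1)\,\dd x\,\dd y$. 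Passing to polar coordinates, using $\int_0^{\pi/2}(\cos(\rho\cos t)-1)(\cos(\rho\sin t)-1)\,\dd t=\frac{\pi}{2}(J_0(\sqrt2\rho)-2J_0(\rho)+1)$, and substituting $v=\pi^2\rho^2$, one obtains $\Psi(\tau)=4(\pi^2\tau)^{-1-\alpha}B(\tau)$ with $B(\tau)=\int_0^\infty(1-\ee^{-\tau v})\kappa(v)\,\dd v$, where $\kappa\ge0$ is explicit and satisfies $\kappa(v)\sim c\,v^{1-\alpha}$ as $v\to0$ and $\kappa(v)=O(v^{-1-\alpha})$ as $v\to\infty$; both follow from $J_0(\sqrt2\rho)-2J_0(\rho)+1=\frac{8}{\pi}\int_0^{\pi/2}\sin^2(\tfrac{\rho\cos t}{2})\sin^2(\tfrac{\rho\sin t}{2})\,\dd t\ge0$. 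Thus $\psi_{r,\alpha}(\theta_2)/\psi_{r/\sqrt2,\alpha}(\theta_2)=2^{\alpha}B(\theta_2)/B(2\theta_2)$, and it suffices to show $\theta_2\mapsto B(\theta_2)/B(2\theta_2)$ is injective on $(0,\infty)$.

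Next I would put $E(\tau):=\tau B'(\tau)/B(\tau)=\dd\log B/\dd\log\tau$ and use the identity
\begin{equation*}
\frac{\dd}{\dd\theta_2}\log\frac{B(\theta_2)}{B(2\theta_2)}=\frac{E(\theta_2)-E(2\theta_2)}{\theta_2},
\end{equation*}
so it is enough that $E$ be strictly monotone. Since $B(0+)=0$ and $B(\infty)=\int_0^\infty\kappa(v)\,\dd v<\infty$ (finite because $\alpha<2$), while the behaviour of $\kappa$ at $0$ and $\infty$ gives $E(0+)=1$ and $E(\infty)=0$, strict monotonicity forces $E$ to be strictly decreasing; then $E(\theta_2)>E(2\theta_2)$, the ratio is strictly increasing, and injectivity follows.

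The remaining point — strict monotonicity of $E$ — is the real obstacle. Here $E'(\tau)$ has the sign of $B'(\tau)B(\tau)+\tau\bigl(B''(\tau)B(\tau)-B'(\tau)^2\bigr)$; inserting $B(\tau)=\int(1-\ee^{-\tau v})\kappa(v)\,\dd v$ and symmetrizing in the two integration variables turns the question of the sign of $E'(\tau)$ into that of $\iint_{\mathbb R_{+}^2}\Theta(\tau v,\tau w)\,\kappa(v)\kappa(w)\,\dd v\,\dd w$, where $\Theta(a,b)=a\ee^{-a}\bigl((1-a)(1-\ee^{-b})-b\ee^{-b}\bigr)+b\ee^{-b}\bigl((1-b)(1-\ee^{-a})-a\ee^{-a}\bigr)$. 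This $\Theta$ is not sign definite (negative near the origin but positive, e.g., near $(a,b)=(1/2,5)$), so no pointwise conclusion is possible; a soft argument via ``$B$ is a positive superposition of powers'' is also unavailable, since the oscillatory Bessel factor keeps $\kappa$ from being completely monotone. One must therefore show the double integral is negative quantitatively, exploiting the positivity of $\kappa$ together with its precise behaviour at $0$ and $\infty$ recorded above. The hypothesis $\alpha\ge1$ is exactly what this needs: for $\alpha<1$ the integral $\iint_{\mathbb R_{+}^2}f_\alpha(\pi^2(x^2+y^2))\,\dd x\,\dd y=\frac1{4\pi}\int_0^\infty(1-\ee^{-z})z^{-1-\alpha}\,\dd z$ is finite and $E$ need not remain monotone as $\tau\to\infty$, which is the source of the cutoff $\underline{\theta_2}(r,\alpha)>0$ in the companion Lemma~\ref{lem12}, whereas for $\alpha\ge1$ that integral diverges and $E$ stays strictly decreasing throughout $(0,\infty)$. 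Establishing the sign of this double integral is the single genuine difficulty; the rest is bookkeeping around the representation of $\psi_{r,\alpha}$.
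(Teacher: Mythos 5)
Your reduction steps are fine and essentially parallel the paper's: rewriting $\psi_{r,\alpha}(\theta_2)/\psi_{r/\sqrt2,\alpha}(\theta_2)=2^\alpha\,\psi_\alpha(\theta_2/r^2)/\psi_\alpha(2\theta_2/r^2)$ (your $B$ is the paper's $\psi_\alpha$ after the substitution $v=x^2$, with nonnegative density $\kappa$ and the stated asymptotics at $0$ and $\infty$), and observing that injectivity follows once the logarithmic derivative of the ratio, i.e.\ $E(\tau)-E(2\tau)$ with $E(\tau)=\tau B'(\tau)/B(\tau)$, has a fixed sign. But the proof stops exactly where the lemma actually lives: you do not establish that sign. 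You candidly note that the symmetrized kernel $\Theta(a,b)$ is not sign definite and that ``the single genuine difficulty'' is to show the double integral is negative ``quantitatively'' — that difficulty is the entire content of the statement, and no mechanism is offered for it, nor any place where the hypothesis $\alpha\ge1$ is actually used (only a heuristic remark about divergence of $\int f_\alpha$). Worse, the intermediate target you pick — strict monotonicity of $E$ — cannot follow from positivity of $\kappa$ alone: for a general Bernstein-type function $B(\tau)=\int(1-\ee^{-\tau v})\kappa(v)\,\dd v$ with $\kappa\ge0$ (e.g.\ a two-atom measure with widely separated atoms) the function $\tau B'(\tau)/B(\tau)$ is not monotone, so any completion of your argument must exploit the precise form of $\kappa$, which is exactly the part left open.

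For comparison, the paper avoids the log-derivative route altogether: it expands $J_0(\sqrt2 x)-2J_0(x)+1$ in a power series, resums with Gamma functions to obtain the closed form
\begin{equation*}
\psi_\alpha'(t)=\frac{1}{2\Gamma(\alpha)}\int_t^\infty x^{\alpha-2}\Bigl(1-\frac{t}{x}\Bigr)^{\alpha-1}(1-\ee^{-1/4x})^2\,\dd x,
\end{equation*}
integrates this to get an analogous representation of $\psi_\alpha(t)$, and then proves $\psi_\alpha'(t)\psi_\alpha(2t)-2\psi_\alpha(t)\psi_\alpha'(2t)>0$ directly by symmetrizing the resulting double integrals: after symmetrization the integrand is pointwise nonnegative because $x\mapsto\ee^{-1/8x}$ is increasing and, for $\alpha\ge1$, $h_{\alpha,t}(x)=x\{(1-t/x)^{1-\alpha}-1\}$ is decreasing on $(t,\infty)$ by Bernoulli's inequality — this is precisely where $\alpha\in[1,2)$ enters. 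If you want to salvage your approach, you would need an analogous explicit representation of $B'$ (or $E$) that makes the sign of the symmetrized integrand manifest; as written, the key inequality is asserted rather than proved, so the proposal has a genuine gap.
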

\begin{proof}
Note that
\begin{equation*}
\psi_{r,\alpha}(\theta_2)
=\frac{2(r')^{2\alpha}}{\theta_2 \pi}
\int_0^\infty 
\frac{1-\ee^{-(x/r')^2}}{x^{1+2\alpha}}(J_0(\sqrt{2} x)-2J_0(x)+1) \dd x,
\end{equation*}
where $r'=r/\sqrt{\theta_2}$.
By setting 
\begin{equation*}
\psi_{\alpha}(t)
=\int_0^\infty 
\frac{1-\ee^{-t x^2}}{x^{1+2\alpha}} (J_0(\sqrt{2}x)-2J_0(x)+1) \dd x,
\end{equation*}
the function \eqref{eq-b01} can be represented as
\begin{equation*}
\theta_2 \mapsto \frac{\psi_{r,\alpha}(\theta_2)}
{\psi_{r/\sqrt{2},\alpha}(\theta_2)}
=2^{\alpha} \times
\frac{\psi_\alpha(\theta_2/r^2)}{\psi_\alpha(2\theta_2/r^2)}.
\end{equation*}
Therefore, it is enough to show that the function
\begin{equation}\label{eq-8-1}
t \mapsto \frac{\psi_{\alpha}(t)}{\psi_{\alpha}(2t)}
\end{equation}
is injective for $t>0$.

Let $\Gamma(s) = \int_0^\infty x^{s-1}\ee^{-x}\dd x$ for $s > 0$. 
Since
\begin{align*}
J_0(\sqrt{2}x)-2J_0(x)+1
&=
\Biggl(1+\sum_{k\ge1} \frac{(-1)^k}{(k!)^2} \biggl(\frac{x^2}{2}\biggr)^{k}\Biggr)
-2\Biggl(
1+\sum_{k\ge1} \frac{(-1)^k}{(k!)^2} \biggl(\frac{x^2}{4}\biggr)^{k}
\Biggr)
+1
\\
&=\sum_{k\ge2} \frac{(-1)^k}{(k!)^2} \biggl(\frac{1}{2^k}-\frac{2}{4^k}\biggr)x^{2k},
\end{align*}
it holds that for $t>0$, 
\begin{align*}
\psi_{\alpha}'(t)
&=
\int_0^\infty 
\frac{\ee^{-t x^2}}{x^{2\alpha-1}} (J_0(\sqrt{2}x)-2J_0(x)+1) \dd x
\\
&=
\int_0^\infty 
\sum_{k\ge2} \frac{(-1)^k}{(k!)^2} \biggl(\frac{1}{2^k}-\frac{2}{4^k}\biggr)
x^{2(k-\alpha)+1} \ee^{-t x^2}\dd x
\\
&=
\sum_{k\ge2} \frac{(-1)^k}{(k!)^2} \biggl(\frac{1}{2^k}-\frac{2}{4^k}\biggr)
\int_0^\infty 
x^{2(k-\alpha)+1} \ee^{-t x^2}\dd x
\\
&=
\sum_{k\ge2} \frac{(-1)^k}{(k!)^2} \biggl(\frac{1}{2^{k+1}}-\frac{1}{4^k}\biggr)
\int_0^\infty 
z^{(k-\alpha+1)-1} \ee^{-t z} \dd z
\\
&=
\sum_{k\ge2} \frac{(-1)^k}{(k!)^2} \biggl(\frac{1}{2^{k+1}}-\frac{1}{4^k}\biggr)
\frac{\Gamma(k-\alpha+1)}{t^{k-\alpha+1}}.
\end{align*}
Let 
\begin{equation*}
\varphi_\alpha(s) = 
\sum_{k\ge2} \frac{\Gamma(k-\alpha+1)}{\Gamma(k+1)} \frac{(-1)^k s^k}{k!}
\end{equation*}
for $s > 0$. Since
\begin{equation*}
\int_0^s x^{k-\alpha} (s-x)^{\alpha-1} \dd x 
= \frac{\Gamma(k-\alpha+1)\Gamma(\alpha)}{\Gamma(k+1)}s^k,
\end{equation*}
one has
\begin{align*}
\varphi_{\alpha}(s) 
&= \sum_{k\ge2} 
\frac{(-1)^k}{\Gamma(\alpha)k!}
\int_0^s x^{k-\alpha} (s-x)^{\alpha-1} \dd x 
\\
&= \frac{1}{\Gamma(\alpha)}
\int_0^s x^{-\alpha} (s-x)^{\alpha-1}
\sum_{k\ge2} 
\frac{(-x)^k}{k!} \dd x 
\\
&= \frac{1}{\Gamma(\alpha)}
\int_0^s x^{-\alpha} (s-x)^{\alpha-1}(\ee^{-x}-1+x) \dd x. 
\end{align*}
Therefore,
\begin{align*}
\psi_{\alpha}'(t) &= 
t^{\alpha-1}
\Biggl(
\frac{1}{2}\sum_{k\ge2} \frac{\Gamma(k-\alpha+1)}{\Gamma(k+1)}
\frac{(-1)^k}{k!} \Bigl(\frac{1}{2t}\Bigr)^k
-\sum_{k\ge2} \frac{\Gamma(k-\alpha+1)}{\Gamma(k+1)}
\frac{(-1)^k}{k!} \Bigl(\frac{1}{4t}\Bigr)^k
\Biggr)
\\
&=
t^{\alpha-1} 
\biggl(\frac{\varphi_\alpha(1/2t)}{2} - \varphi_\alpha(1/4t) \biggr)
\\
&= \frac{t^{\alpha-1}}{\Gamma(\alpha)} 
\Biggl\{
\frac{1}{2} \int_0^{1/2t} 
x^{-\alpha} \Bigl(\frac{1}{2t}-x\Bigr)^{\alpha-1}
(\ee^{-x}-1+x) \dd x
\\
&\qquad\qquad
-\int_0^{1/4t} 
x^{-\alpha} \Bigl(\frac{1}{4t}-x\Bigr)^{\alpha-1}
(\ee^{-x}-1+x) \dd x
\Biggr\}
\\
&= \frac{1}{\Gamma(\alpha)} 
\Biggl\{
\frac{1}{2^{\alpha}} \int_0^{1/2t} 
x^{-\alpha} (1-2t x)^{\alpha-1}(\ee^{-x}-1+x) \dd x
\\
&\qquad\qquad
-\frac{1}{2\cdot4^{\alpha-1}}\int_0^{1/2t} 
\Bigl(\frac{x}{2}\Bigr)^{-\alpha} (1-2t x)^{\alpha-1}
\Bigl(\ee^{-x/2}-1+\frac{x}{2}\Bigr) \dd x
\Biggr\}
\\
&= \frac{1}{2^{\alpha}\Gamma(\alpha)} 
\int_0^{1/2t} 
x^{-\alpha} (1-2t x)^{\alpha-1} (1-\ee^{-x/2})^2 \dd x
\\
&= \frac{1}{2\Gamma(\alpha)} 
\int_{t}^\infty 
x^{\alpha-2} \Bigl(1-\frac{t}{x}\Bigr)^{\alpha-1} 
(1-\ee^{-1/4x})^2 \dd x.
\end{align*}
We see from $\psi_\alpha(0)=0$ that
\begin{align*}
\psi_\alpha(t) &= \int_0^t \psi_\alpha'(s) \dd s
= \frac{1}{2\Gamma(\alpha)} 
\int_0^t 
\int_{s}^\infty 
x^{\alpha-2} \Bigl(1-\frac{s}{x}\Bigr)^{\alpha-1} 
(1-\ee^{-1/4x})^2 \dd x \dd s
\\
&=
\frac{1}{2\Gamma(\alpha)} 
\Biggl\{
\int_0^t 
x^{\alpha-2} (1-\ee^{-1/4x})^2 
\int_0^x 
\Bigl(1-\frac{s}{x}\Bigr)^{\alpha-1} \dd s \dd x
\\
&\qquad\qquad
+\int_t^\infty 
x^{\alpha-2} (1-\ee^{-1/4x})^2 
\int_0^t 
\Bigl(1-\frac{s}{x}\Bigr)^{\alpha-1} \dd s \dd x
\Biggr\}
\\
&=
\frac{1}{2\Gamma(\alpha+1)} 
\Biggl\{
\int_0^t 
x^{\alpha-1} (1-\ee^{-1/4x})^2 \dd x
\\
&\qquad\qquad
+\int_t^\infty 
x^{\alpha-1} (1-\ee^{-1/4x})^2 
\biggl\{1-\Bigl(1-\frac{t}{x}\Bigr)^{\alpha}\biggr\} \dd x
\Biggr\}.
\end{align*}
Hence, noting that 
\begin{align*}
\psi_\alpha(2t) &=
\frac{1}{2\Gamma(\alpha+1)} 
\Biggl\{
\int_0^{2t} 
x^{\alpha-1} (1-\ee^{-1/4x})^2 \dd x
\\
&\qquad\qquad
+\int_{2t}^\infty 
x^{\alpha-1} (1-\ee^{-1/4x})^2 
\biggl\{1-\Bigl(1-\frac{2t}{x}\Bigr)^{\alpha}\biggr\} \dd x
\Biggr\}
\\
&=
\frac{2^{\alpha-1}}{\Gamma(\alpha+1)} 
\Biggl\{
\int_0^{t} 
x^{\alpha-1} (1-\ee^{-1/8x})^2 \dd x
\\
&\qquad\qquad
+\int_{t}^\infty 
x^{\alpha-1} (1-\ee^{-1/8x})^2 
\biggl\{1-\Bigl(1-\frac{t}{x}\Bigr)^{\alpha}\biggr\} \dd x
\Biggr\}
\end{align*}
and
\begin{align*}
\psi_\alpha'(2t) &=
\frac{1}{2\Gamma(\alpha)} 
\int_{2t}^\infty 
x^{\alpha-1} \Bigl(1-\frac{2t}{x}\Bigr)^{\alpha-1} 
(1-\ee^{-1/4x})^2 \dd x
\\
&=
\frac{2^{\alpha-2}}{\Gamma(\alpha)} 
\int_{t}^\infty 
x^{\alpha-2} \Bigl(1-\frac{t}{x}\Bigr)^{\alpha-1} 
(1-\ee^{-1/8x})^2 \dd x,
\end{align*}
one has
\begin{align*}
\psi_{\alpha}'(t) \psi_{\alpha}(2t) -2\psi_{\alpha}(t) \psi_{\alpha}'(2t)
=\frac{2^{\alpha-2}}{\Gamma(\alpha)\Gamma(\alpha+1)}
(I_{\alpha,1}(t)+I_{\alpha,2}(t)),
\end{align*}
where
\begin{align*}
I_{\alpha,1}(t) &= 
\int_{t}^\infty 
x^{\alpha-2} \Bigl(1-\frac{t}{x}\Bigr)^{\alpha-1} 
(1-\ee^{-1/4x})^2 \dd x
\int_0^{t} x^{\alpha-1} (1-\ee^{-1/8x})^2 \dd x
\\
&\qquad-
\int_0^t x^{\alpha-1} (1-\ee^{-1/4x})^2 \dd x
\int_{t}^\infty 
x^{\alpha-2} \Bigl(1-\frac{t}{x}\Bigr)^{\alpha-1} 
(1-\ee^{-1/8x})^2 \dd x,
\\
I_{\alpha,2}(t) &=
\int_{t}^\infty 
x^{\alpha-2} \Bigl(1-\frac{t}{x}\Bigr)^{\alpha-1} 
(1-\ee^{-1/4x})^2 \dd x
\\
&\qquad\qquad\times
\int_{t}^\infty 
x^{\alpha-1} (1-\ee^{-1/8x})^2 
\biggl\{1-\Bigl(1-\frac{t}{x}\Bigr)^{\alpha}\biggr\} \dd x
\\
&\qquad-
\int_t^\infty 
x^{\alpha-1} (1-\ee^{-1/4x})^2 
\biggl\{1-\Bigl(1-\frac{t}{x}\Bigr)^{\alpha}\biggr\} \dd x
\\
&\qquad\qquad\times
\int_{t}^\infty 
x^{\alpha-2} \Bigl(1-\frac{t}{x}\Bigr)^{\alpha-1} 
(1-\ee^{-1/8x})^2 \dd x.
\end{align*}
Since $\ee^{-1/8x}-\ee^{-1/8y}> 0$ for $0<y < x$, it follows that
\begin{align*}
I_{\alpha,1}(t) &=
\int_t^\infty \int_0^t 
x^{\alpha-2} \Bigl(1-\frac{t}{x}\Bigr)^{\alpha-1} 
y^{\alpha-1}
\\
&\qquad
\times
\{(1-\ee^{-1/4x})^2(1-\ee^{-1/8y})^2
-(1-\ee^{-1/8x})^2(1-\ee^{-1/4y})^2\}
\dd y \dd x
\\
&=
\int_t^\infty \int_0^t 
x^{\alpha-2} \Bigl(1-\frac{t}{x}\Bigr)^{\alpha-1} 
y^{\alpha-1}
(1-\ee^{-1/8x})^2(1-\ee^{-1/8y})^2 
\\
&\qquad
\times
(2+\ee^{-1/8x}+\ee^{-1/8y})
(\ee^{-1/8x}-\ee^{-1/8y})
\dd y \dd x
\\
&  > 0.
\end{align*}
Moreover, it holds that
\begin{align*}
I_{\alpha,2}(t) &=
\int_{t}^\infty \int_{t}^\infty 
x^{\alpha-2} \Bigl(1-\frac{t}{x}\Bigr)^{\alpha-1} y^{\alpha-1} 
\biggl\{1-\Bigl(1-\frac{t}{y}\Bigr)^{\alpha}\biggr\} 
\\
&\qquad
\times
\{(1-\ee^{-1/4x})^2(1-\ee^{-1/8y})^2
-(1-\ee^{-1/8x})^2(1-\ee^{-1/4y})^2\}
\dd y \dd x
\\
&=
\int_{t}^\infty \int_{t}^\infty 
x^{\alpha-2} \Bigl(1-\frac{t}{x}\Bigr)^{\alpha-1} 
y^{\alpha-1} 
\biggl\{1-\Bigl(1-\frac{t}{y}\Bigr)^{\alpha}\biggr\} 
\\
&\qquad
\times
(1-\ee^{-1/8x})^2(1-\ee^{-1/8y})^2 
(2+\ee^{-1/8x}+\ee^{-1/8y})
(\ee^{-1/8x}-\ee^{-1/8y})
\dd y \dd x
\\
&=:
\int_{t}^\infty \int_{t}^\infty g_{\alpha,t}(x,y) \dd y \dd x
\\
&=
\iint_{D_t} (g_{\alpha,t}(x,y)+g_{\alpha,t}(y,x)) \dd x \dd y,
\end{align*}
where $D_t=\{(x,y) \in \mathbb R_{+}^2|x > y > t\}$. 
By setting
\begin{equation*}
h_{\alpha,t}(x) = x\biggl\{ \Bigl(1-\frac{t}{x}\Bigr)^{1-\alpha} -1 \biggr\} 
\end{equation*}
for $\alpha\ge1$ and $x> t$, it follows that
\begin{align*}
&g_{\alpha,t}(x,y)+g_{\alpha,t}(y,x)
\\
&=x^{\alpha-2}y^{\alpha-2}
\Biggl\{
\Bigl(1-\frac{t}{x}\Bigr)^{\alpha-1}y
\biggl\{1-\Bigl(1-\frac{t}{y}\Bigr)^{\alpha}\biggr\}
-\Bigl(1-\frac{t}{y}\Bigr)^{\alpha-1}x
\biggl\{1-\Bigl(1-\frac{t}{x}\Bigr)^{\alpha}\biggr\}
\Biggr\}
\\
&\qquad
\times
(1-\ee^{-1/8x})^2(1-\ee^{-1/8y})^2 
(2+\ee^{-1/8x}+\ee^{-1/8y})
(\ee^{-1/8x}-\ee^{-1/8y})
\\
&=x^{\alpha-2}y^{\alpha-1}
\Bigl(1-\frac{t}{x}\Bigr)^{\alpha-1}
\biggl\{
1-\Bigl(1-\frac{t}{y}\Bigr)^{\alpha-1}
\biggr\}
\biggl( 1-\frac{h_{\alpha,t}(x)}{h_{\alpha,t}(y)} \biggr)
\\
&\qquad
\times
(1-\ee^{-1/8x})^2(1-\ee^{-1/8y})^2 
(2+\ee^{-1/8x}+\ee^{-1/8y})
(\ee^{-1/8x}-\ee^{-1/8y}).
\end{align*}
Using the Bernoulli inequality:
\begin{equation*}
(1+z)^\beta \ge 1+\beta z
\end{equation*}
for $\beta \ge 1$ and $z>-1$,
we see that
$h_{\alpha,t}'(x) = (1-\frac{t}{x})^{-\alpha}(1-\frac{\alpha t}{x})-1 \le 0$ for $x>t$
and $h_{\alpha,t}(x)$ is monotonically decreasing for $x>t$ when $\alpha \ge 1$. 
Therefore, we obtain that $g_{\alpha,t}(x,y)+g_{\alpha,t}(y,x) \ge 0$, that is, 
$I_{\alpha,2}(t) \ge 0$.
Consequently, we have
$\psi_{\alpha}'(t)\psi_{\alpha}(2t)-2\psi_{\alpha}(t)\psi_{\alpha}'(2t) >0$,
which implies that the function \eqref{eq-8-1} is injective if $\alpha \in [1,2)$.
\end{proof}

\begin{lem}\label{lem12}
Let $\alpha \in (0,1)$ and $r \in (0,\infty)$. 
Then, the function \eqref{eq-b01} is injective for 
$\theta_2> -\frac{r^2}{8 \log(2^{\alpha/2}-1)}$.
\end{lem}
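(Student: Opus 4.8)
The plan is to follow the template of the proof of Lemma~\ref{lem11} as far as possible, using that every closed-form identity derived there for $\psi_\alpha$ and $\psi_\alpha'$ is valid for all $\alpha\in(0,2)$ and that only the concluding sign argument used $\alpha\ge1$.

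First I would reduce, exactly as in the proof of Lemma~\ref{lem11}, to proving that $t\mapsto\psi_\alpha(t)/\psi_\alpha(2t)$ is injective on $(c_\alpha,\infty)$, where $c_\alpha=-1/(8\log(2^{\alpha/2}-1))>0$ and $\psi_\alpha$ is as in that proof; indeed $\psi_{r,\alpha}(\theta_2)/\psi_{r/\sqrt2,\alpha}(\theta_2)=2^\alpha\,\psi_\alpha(\theta_2/r^2)/\psi_\alpha(2\theta_2/r^2)$, and $\theta_2>\underline{\theta_2}(r,\alpha)$ is precisely $\theta_2/r^2>c_\alpha$. Since $\psi_\alpha(2t)>0$, it suffices to show strict monotonicity there, i.e. a fixed sign for $D(t):=\psi_\alpha'(t)\psi_\alpha(2t)-2\psi_\alpha(t)\psi_\alpha'(2t)$ on $(c_\alpha,\infty)$. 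Letting $t\to\infty$ in the closed forms of the proof of Lemma~\ref{lem11} gives $\psi_\alpha(t)/\psi_\alpha(2t)\to1$, while $\psi_\alpha'>0$ forces $\psi_\alpha(2t)>\psi_\alpha(t)$ and hence the ratio strictly below $1$; so $D>0$ for large $t$, and it is enough to prove $D(t)>0$ for all $t>c_\alpha$.

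Next I would invoke the decomposition $D(t)=\frac{2^{\alpha-2}}{\Gamma(\alpha)\Gamma(\alpha+1)}\bigl(I_{\alpha,1}(t)+I_{\alpha,2}(t)\bigr)$ established in the proof of Lemma~\ref{lem11}, with $I_{\alpha,1},I_{\alpha,2}$ the same double integrals. The proof that $I_{\alpha,1}(t)>0$ uses only $e^{-1/8x}-e^{-1/8y}>0$ for $x>y$ and carries over verbatim. What changes for $\alpha\in(0,1)$ is $I_{\alpha,2}$: now $h_{\alpha,t}(x)=x\{(1-t/x)^{1-\alpha}-1\}$ is negative and, by the same Bernoulli computation, \emph{increasing} in $x$, and $1-(1-t/y)^{\alpha-1}<0$; tracing the factorization of $g_{\alpha,t}(x,y)+g_{\alpha,t}(y,x)$ from that proof, both sign flips occur and one finds $g_{\alpha,t}(x,y)+g_{\alpha,t}(y,x)\le0$ on $D_t$, so $I_{\alpha,2}(t)\le0$. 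Hence $D(t)>0$ on $(c_\alpha,\infty)$ is equivalent to $I_{\alpha,1}(t)>|I_{\alpha,2}(t)|$ there, and this is exactly where the cutoff $\underline{\theta_2}(r,\alpha)$ enters.

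The plan for the decisive estimate is to use that all exponential factors appearing in $I_{\alpha,2}(t)$ have arguments $x,y\ge t$, so for $t>c_\alpha$ one has $1-e^{-1/8x}<1-e^{-1/8c_\alpha}=2-2^{\alpha/2}$ uniformly on the domain of integration; substituting this bound and comparing integrand by integrand against $I_{\alpha,1}(t)$ --- whose inner integral runs over $(0,t)$, where no such restriction is available --- should yield $I_{\alpha,1}(t)\ge|I_{\alpha,2}(t)|$, with $c_\alpha$ the exact value at which the relevant constants balance, since $(1+e^{-1/8c_\alpha})^2=2^\alpha$. I expect this comparison of the two double integrals to be the main obstacle: unlike the clean pointwise sign available in Lemma~\ref{lem11}, here positive and negative contributions genuinely compete, and one must reorganize the $\int_0^t(\cdot)$ and $\int_t^\infty(\cdot)\{1-(1-t/x)^\alpha\}$ pieces so that the cancellation surviving above $\underline{\theta_2}(r,\alpha)$ becomes visible; the bookkeeping needed to match those pieces is the delicate part.
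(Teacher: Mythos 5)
Your reduction and your sign analysis are fine as far as they go: the reduction to strict monotonicity of $t\mapsto\psi_\alpha(t)/\psi_\alpha(2t)$ on $(c_\alpha,\infty)$ with $c_\alpha=-1/(8\log(2^{\alpha/2}-1))$ is exactly the right setting, the proof that $I_{\alpha,1}(t)>0$ does carry over to $\alpha\in(0,1)$, and your observation that for $\alpha\in(0,1)$ the factor $1-(1-t/y)^{\alpha-1}$ is negative while $h_{\alpha,t}$ becomes negative and increasing (Bernoulli reverses), so that $I_{\alpha,2}(t)\le0$, is correct. But at that point the lemma is still entirely unproved: everything now hinges on the quantitative inequality $I_{\alpha,1}(t)>|I_{\alpha,2}(t)|$ for $t>c_\alpha$, and your proposal only gestures at it. The suggested bound $1-\ee^{-1/8x}<2-2^{\alpha/2}$ on the domain of $I_{\alpha,2}$ does not by itself produce a domination: $I_{\alpha,1}$ integrates over $(t,\infty)\times(0,t)$ with weight $y^{\alpha-1}$, while $I_{\alpha,2}$ integrates over $\{x>y>t\}$ with the extra factor $1-(1-t/y)^{\alpha-1}$ and a much smaller difference $\ee^{-1/8x}-\ee^{-1/8y}$, and there is no evident pointwise or piecewise matching of integrands; you acknowledge this yourself (``should yield'', ``the bookkeeping \dots is the delicate part''). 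Since this comparison is precisely the content of Lemma \ref{lem12} beyond Lemma \ref{lem11}, the proposal has a genuine gap at the decisive step, and it is not clear the sketched route can be completed without substantial new ideas.

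For contrast, the paper does not try to salvage the $I_{\alpha,1}+I_{\alpha,2}$ decomposition at all when $\alpha\in(0,1)$. It introduces $G_\alpha(t)=\int_t^\infty(x-t)^{\alpha-1}(1-\ee^{-1/4x})^2\,\dd x$ and $F_\alpha(t)=(G_\alpha(0)-G_\alpha(t))/(t\psi_\alpha'(t))$, derives the identity $\psi_\alpha(t)=t\psi_\alpha'(t)\bigl(F_\alpha(t)/(2\Gamma(\alpha+1))+1/\alpha\bigr)$ by relating $\psi_\alpha$ to $\psi_{\alpha+1}'$, and thereby rewrites $\psi_\alpha'(t)\psi_\alpha(2t)-2\psi_\alpha(t)\psi_\alpha'(2t)$ as a positive multiple of $F_\alpha(2t)-F_\alpha(t)$. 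Positivity then follows from the strict monotonicity of $G_\alpha$ together with $\psi_\alpha'(t)>2\psi_\alpha'(2t)$, the latter being exactly where $(1+\ee^{-1/8t})^2>2^\alpha$, i.e. $t>c_\alpha$, enters. This is where the constant you identified is actually used, and it sidesteps the competing-integrals comparison entirely; if you want to complete your write-up, you would either need to supply the missing domination argument or switch to an argument of this type.
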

\begin{proof}
It suffices to show that 
$\psi_{\alpha}'(t) \psi_{\alpha}(2t) -2\psi_{\alpha}(t) \psi_{\alpha}'(2t) > 0$
for $t> -\frac{1}{8 \log(2^{\alpha/2}-1)}$.
Let 
\begin{equation*}
G_\alpha(t) = 
\int_t^\infty (x-t)^{\alpha-1} (1-\ee^{-1/4 x})^2 \dd x,
\quad
F_\alpha(t) = 
\frac{G_\alpha(0)-G_\alpha(t)}{t \psi_\alpha'(t)}.
\end{equation*}
Noting that
\begin{align*}
\psi_\alpha(t) &= 
\frac{1}{2\Gamma(\alpha+1)} 
\Biggl\{
\int_0^\infty x^{\alpha-1} (1-\ee^{-1/4x})^2 \dd x
\\
&\qquad\qquad
-\int_t^\infty 
x^{\alpha-1} \Bigl(1-\frac{t}{x}\Bigr)^{\alpha} (1-\ee^{-1/4x})^2 \dd x
\Biggr\}
\\
&=
\frac{G_\alpha(0)}{2\Gamma(\alpha+1)} -\psi_{\alpha+1}'(t),
\\
\psi_{\alpha+1}'(t) 
&=
\frac{1}{2\Gamma(\alpha+1)} 
\int_t^\infty 
x^{\alpha-1} \Bigl(1-\frac{t}{x}\Bigr)^{\alpha-1} (1-\ee^{-1/4x})^2 \dd x
\\
&\qquad
-\frac{t}{2\Gamma(\alpha+1)} 
\int_t^\infty 
x^{\alpha-2} \Bigl(1-\frac{t}{x}\Bigr)^{\alpha-1} (1-\ee^{-1/4x})^2 \dd x
\\
&= \frac{G_\alpha(t)}{2\Gamma(\alpha+1)} -\frac{t\psi_\alpha'(t)}{\alpha},
\end{align*}
we obtain
\begin{align*}
\psi_\alpha(t) &= \frac{G_\alpha(0)-G_\alpha(t)}{2\Gamma(\alpha+1)}
+\frac{t\psi_\alpha'(t)}{\alpha}
=t\psi_\alpha'(t) 
\biggl(
\frac{F_\alpha(t)}{2\Gamma(\alpha+1)}+\frac{1}{\alpha}
\biggr).
\end{align*}
Therefore, it follows that 
\begin{align*}
\psi_{\alpha}'(t) \psi_{\alpha}(2t) -2\psi_{\alpha}(t) \psi_{\alpha}'(2t)
&= 2t\psi_{\alpha}'(t)\psi_\alpha'(2t) 
\biggl(
\frac{F_\alpha(2t)}{2\Gamma(\alpha+1)}+\frac{1}{\alpha}
\biggr)
\\
&\qquad
- 2t\psi_\alpha'(t) \psi_{\alpha}'(2t)
\biggl(
\frac{F_\alpha(t)}{2\Gamma(\alpha+1)}+\frac{1}{\alpha}
\biggr)
\\
&= 
\frac{t\psi_{\alpha}'(t)\psi_\alpha'(2t)}{\Gamma(\alpha+1)}
(F_\alpha(2t)-F_\alpha(t)).
\end{align*}
Notice that $G_\alpha(t)$ is strictly monotonically decreasing for $t>0$, that is,
\begin{equation}\label{eq-9-1}
G_\alpha(0)-G_\alpha(2t) > G_\alpha(0)-G_\alpha(t) >0.
\end{equation}
Since 
\begin{align*}
\psi_\alpha'(t) &=
\frac{t^{\alpha-1}}{2\Gamma(\alpha)}
\int_0^1 x^{-\alpha}(1-x)^{\alpha-1}(1-\ee^{-x/4t})^2 \dd x,
\\
\psi_\alpha'(2t) &=
\frac{2^{\alpha-2}t^{\alpha-1}}{\Gamma(\alpha)}
\int_0^1 x^{-\alpha}(1-x)^{\alpha-1}(1-\ee^{-x/8t})^2 \dd x
\end{align*}
and $(1+\ee^{-1/8t})^2 > 2^\alpha$ for $t > -\frac{1}{8 \log(2^{\alpha/2}-1)}$,
we have
\begin{align*}
\psi_\alpha'(t) &=
\frac{t^{\alpha-1}}{2\Gamma(\alpha)}
\int_0^1 x^{-\alpha}(1-x)^{\alpha-1}(1-\ee^{-x/8t})^2(1+\ee^{-x/8t})^2 \dd x
\\
&>
\frac{t^{\alpha-1}}{2\Gamma(\alpha)}(1+\ee^{-1/8t})^2
\int_0^1 x^{-\alpha}(1-x)^{\alpha-1}(1-\ee^{-x/8t})^2 \dd x
\\
&> 2\psi_\alpha'(2t),
\end{align*}
which together with \eqref{eq-9-1} yields that
$F_\alpha(2t)-F_\alpha(t)>0$ for $t > -\frac{1}{8 \log(2^{\alpha/2}-1)}$. 
\end{proof}


\end{document}